\newtheorem{definition}{Definition}[section]
\newtheorem{remark}[definition]{Remark}
\newtheorem{example}[definition]{Example}
\newtheorem{algorithm}[definition]{Algorithm}
\newtheorem{lemma}[definition]{Lemma}
\newtheorem{proposition}[definition]{Proposition}
\newtheorem{theorem}[definition]{Theorem}
\newtheorem{corollary}[definition]{Corollary}
\def\Q{{\mathbb{Q}}}
\def\N{{\mathbb{N}}}
\def\cs{{\mathcal{s}}}
\def\R{{\mathbb{R}}}
\def\bfalpha{{\mathbf{\alpha}}}
\def\bfbeta{{\mathbf{\beta}}}
\def\bX{{\mathbf{X}}}
\def\bg{{\mathbf{g}}}
\def\bfr{{\mathbf{r}}}
\def\cS{{\mathcal{S}}}
\def\cG{{\mathcal{G}}}
\def\cF{{\mathcal{F}}}
\def\cM{{\mathcal{M}}}
\def\Z{{\mathbb{Z}}}
\def\bfgamma{{\bf\gamma}}
\def\bfv{{\mathbf{v}}}
\def\bfx{{\mathbf{x}}}
\def\bfS{{\mathbf{S}}}
\def\bfY{{\mathbf{Y}}}
\def\rmR{{\mathrm{R}}}
\def\fm{{\mathfrak{m}}}
\def\C{{\mathrm{C}}}
\def\Y{{\underline{Y}}}
\def\bfc{{\bf{c}}}
\def\bfp{{\bf{p}}}
\def\cB{{\mathcal{B}}}
\def\bfa{{\bf{a}}}
\def\bfb{{\bf{b}}}
\def\bfs{{\bf{s}}}
\def\bff{{\bf{f}}}
\def\bfu{{\bf{u}}}
\def\bfw{{\bf{w}}}
\def\bfz{{\bf{z}}}
\def\cH{{\mathcal{H}}}
\def\bfI{{\bf{I}}}
\begin{document}

\title[Resolutions of fan algebras of principal ideals]{On the resolution of fan algebras of principal ideals over a Noetherian ring}

\author{Teresa Cortadellas Ben\'itez}
\address{Facultat de Educaci\'o, Universitat de Barcelona.
Passeig de la Vall d'Hebron 171,
08035 Barcelona, Spain}
\email{terecortadellas@ub.edu}

\author{Carlos D'Andrea}
\address{Universitat de Barcelona, Departament de Matem\`atiques i Inform\`atica,
 Universitat de Barcelona (UB),
 Gran Via de les Corts Catalanes 585,
 08007 Barcelona,
 Spain} 
\email{cdandrea@ub.edu}
\urladdr{http://atlas.mat.ub.es/personals/dandrea}

\author{Florian Enescu}
\address{Department of Mathematics and Statistics, Georgia State University, Atlanta, GA 30303 USA}
\email{fenescu@gsu.edu}
\urladdr{http://www2.gsu.edu/~matfxe/}
\thanks{T.~Cortadellas is supported by the Spanish MEC research project MTM2013-40775-P, C.~D'Andrea is supported by the Spanish MINECO/FEDER
research project MTM 2015-65361-P and the ``Mar\'ia de Maeztu'' Programme for Units of Excellence in R\&D (MDM-2014-0445). }
\date{\today}

\subjclass[2010]{Primary: 13A30 ; Secondary: 05E40,13P10, 13P20 }
\keywords{Fan algebra, convex polyhedral cones, fan linear functions, intersection algebra, minimal resolutions}

\begin{abstract}
We construct explicitly a resolution of a fan algebra of principal ideals over a Noetherian ring  for the case when the fan is a proper rational cone in the plane.  Under some mild conditions on the initial data, we show that this resolution is minimal.
\end{abstract}

\maketitle
\section{Introduction}\label{introduction}

Let $\rmR$ be a commutative ring, $m,\,n \geq 1$ integers,$\,p_1,\ldots, p_n\in\rmR,\,\cF$ a fan in $\R^m$ such that its support $|\cF|$ is a convex set, and $f_1,\ldots, f_n$ fan linear maps on $\cF$ (see Definition~\ref{flm} for more details). With all this data,  one can consider the following algebra 
\begin{equation}\label{pfalgebra}
\cB_{\bff,\cF,\bfp} =\sum_{\bfv\in|\cF|\cap\Z^m}\langle p_1^{f_1(\bfv)}\cdot\ldots\cdot p_n^{f_n(\bfv)}\rangle\bfx^\bfv\subset \rmR[x_1^{\pm1},x_2^{\pm1},\ldots, x_m^{\pm1}],
\end{equation}
where $x_1,\ldots, x_m$ are indeterminates, $\bff=(f_1,\ldots, f_n),\,\bfp=(p_1,\ldots, p_n)$, $\bfx^{\bfv} = x_1^{v_1}\cdots x_m^{v_m}$ and $|\cF|\subset\R^m$ is the support of the fan.  The $\rmR$-algebra $\cB_{\bff,\cF,\bfp}$ is called {\em the fan algebra} associated to the data $(\bff, \cF, \bfp).$ It is a natural combinatorial object associated to a fan. In this paper, we will describe the free resolution of $\cB_{\bff,\cF,\bfp}$ as an $\rmR$-algebra, when $m=2,\, \rmR$ is Noetherian, and $|\cF|$ is a proper rational cone.
\par Fan algebras have been introduced in~\cite{mal13a, mal13b} (see also~\cite{EM14}), but here we extend the definition further. In the aforementioned papers, it has been noted that they generalize intersection algebras in the case of principal ideals, an interesting class of algebras in its own right. The finite generation of intersection algebras has applications to the lengths of Tors of quotients by powers of ideals, as shown in~\cite{F}, and to the asymptotic growth of powers of ideals, as shown in~\cite{CES}.  

Before we state the main result of this paper, we will introduce the necessary definitions and put the fan algebra in context.

\begin{definition}\label{1.2}  A {\rm convex polyhedral cone} in $\R^m$ is a convex cone generated by a finite subset $S \subset \mathbb{R}^m$, i.e. a set of the form
$$\sigma = \left\{ \sum_{u \in S} \lambda_u \cdot u : \lambda_u \geq 0\right\}.$$
A {\rm supporting hyperplane} for $\sigma$ is a hyperplane $H$ in $\R^m$ containing the origin,  and such that $\sigma$ is contained in one of the half-spaces determined by $H$. The intersection between a supporting hyperplane $H$  and $\sigma$ is by definition a {\rm face} of $\sigma.$ The convex polyhedral cone $\sigma$ is called {\rm rational} if $S$ can be taken from $\Z^m$. A {\rm strongly convex polyhedral cone}  is a convex polyhedral cone $\sigma$ with $\sigma \cap -\sigma = 0$. In what follows, a convex rational polyhedral cone  which is not either the whole space nor contained in a line , will simply be called a proper rational cone.
\end{definition}

We now recall the notion of a fan, commonly used in toric geometry, that is of central importance in this paper.

\begin{definition}
A {\rm fan} $\cF$ in $\R^m$ is a finite collection of strongly rational cones $\sigma$ such that 

\begin{enumerate}
\item for all $\sigma \in \cF$, each face of $\sigma$ is also in $\cF$.

\item for all $\sigma_1, \sigma_2$ in $\cF$, the intersection $\sigma_1 \cap \sigma_2$ is a face of each.

\end{enumerate}
The {\rm support} of $\cF$ is the union of all $\sigma$ in $\cF$. We will denote this by $|\cF|$. In this paper, we make the additional assumption that the support $|\cF|$ is also a convex cone.

\end{definition}

\begin{example}
\label{f23}
A simple example of a fan is $\{ \sigma_1, \sigma_2, \tau_0, \tau_1, \tau_2, \tau_3\}$ where $$\sigma_1=\{ (x,y) \in {(\R_{\geq0}})^2: 3x \geq 2y \},\ \sigma_2 =  \{ (x,y) \in ({\R_{\geq0})^2}: 3x \leq 2y \},$$

$$\tau_0=\{ (x,0) \in ({\R_{\geq0}})^2\}, \  \tau_1=\{ (x,y) \in ({\R_{\geq0}})^2: 3x = 2y\}, \ \tau_2=\{ (0,y) \in ({\R_{\geq0}})^2\}, \ \tau_3=\{(0,0)\}. $$ Its support is $(\R_{\geq0})^2$.

\end{example}

\begin{definition}
\label{flm}
Let $\cF$ be a fan in $\R^m$. A function  $f : |\cF| \cap \Z^m \to \N$ is called {\rm fan linear} if $f$ is $\N$-linear on each face of $\cF$ and subadditive on the support of the fan, i.e. $f(u+v) \leq f(u)+f(v)$ for all $u, v \in  |\cF| \cap \Z^m$.

\end{definition}

\begin{example} 
\label{ff23}
For  the fan $\cF$ from Example~\ref{f23}, let $f(x,y)= \max(3x,2y)$, for all $(x, y) \in \N^2 = |\cF | \cap \Z^2$. This obviously defines a fan linear map on $\cF.$ 

\end{example}

With these definitions in place, we can define the concept that is central to our paper.

\begin{definition}
Let $n, m \geq 1$ be integers, $\cF$ a fan in $\R^m$, $\bff=(_1, \ldots, f_n)$ a collection of fan linear maps on $\cF$, and $\bfI = (I_1, \ldots, I_n)$ a collection of ideals in a commutative ring $R$.
 
The {\em fan algebra} associated to the data $(\bff, \cF,\,\bfI)$ is defined as

\begin{equation}\label{falgebra}
\cB_{\bff,\cF,\bfI}:=\sum_{\bfv\in|\cF|\cap\Z^m}I_1^{f_1(\bfv)}\ldots \,I_n^{f_n(\bfv)} \bfx^\bfv\subset \rmR[x_1^{\pm1},x_2^{\pm1},\ldots, x_m^{\pm1}],
\end{equation}
where $\bfx=(x_1,\ldots, x_m)$ are indeterminates.
\end{definition}

In the case $I_j$ is the principal ideal $\langle p_j\rangle,$ for all $\,j=1,\ldots, n,$ we have that $\cB_{\bff,\cF,\bfI}$ is the fan algebra $\cB_{\bff,\cF,\bfp}$ defined in \eqref{pfalgebra}.

\begin{remark}{\rm We summarize here some special cases of the definition, putting it in perspective and showing the relevance of this concept in the generality presented here.
\label{ff} $^{}$ 
\begin{enumerate}
\item If the support of $\cF$ is $\R _{\geq 0}^m$, we recover the definition given by Malec in~\cite{mal13b}.
\item  If $|\cF|=(\R_{\geq0})^m$ and $f_i=0,\,i=1,\ldots, n,$ it is easy to see that $\cB_{\bff,\cF,\bfI}= \rmR[x_1,\ldots, x_m],$ the ring of polynomials in $m$ variables. If instead we have that $|\cF|=\R^m$ and the $f_i's$ identically zero as before, we obtain  $\rmR[x_1^{\pm1},\ldots, x_m^{\pm1}],$ the ring of Laurent polynomials in $m$ variables. 
\item Let $I_1, \ldots, I_n$ be ideals of $R$. Their {\em intersection algebra} is defined as $$\cB_R(I_1,\ldots, I_n):=\sum_{r_1, \ldots, r_n \in\N}(I_1^{r_1}\cap \ldots \cap I_n^{r_n})x_1^{r_1}\cdots x_n^{r_n}\subset \rmR[x_1,x_2, \ldots, x_n].$$Intersection algebras of principal ideals in a UFD are fan algebras, as it was shown in \cite{mal13a}. A particularly interesting case is that of two principal monomial ideals in $k[p_1, \ldots, p_n]$, where $k$ is a field and $p_1, \ldots, p_n$ indeterminates. More specifically let $\bfa =(a_1, \ldots,a_n), \bfb=(b_1, \ldots, b_n)$ be two nonnegative integer vectors and $I_1= \langle \bfp^\bfa\rangle, I_2=\langle\bfp^\bfb\rangle.$ Then $I_1^{r} \cap I_2^s =\langle p_1^{\max(a_1 r , b_1 s)} \cdots p_n^{\max(a_n r, b_n s)}\rangle$ and
$$\cB_\rmR (I_1, I_2) = \sum_{r \geq 0, s\geq 0} \langle p_1^{\max(a_1 r , b_1 s)} \cdots p_n^{\max(a_n r, b_n s)}\rangle x_1^rx_2^s.$$ We will denote this algebra as $\cB(\bfa, \bfb)$. This algebra is toric, normal, Cohen-Macaulay and of dimension $n+2$, as shown in~\cite{mal13b, mal13a, EM14}.

\item Set $m=n,\,|\cF|=(\R_{\geq0})^n,$  and $\bfI = (I_1, \ldots, I_n)$ ideals in $R$ and $f_i:(\R_{\geq0})^n \cap \Z^n\to \N$ the projection on the $i$th coordinate, $i=1, \ldots, n$. Then 
$$\cB_{\bff,\cF,\bfI} = \sum_{(k_1, \dots, k_n) \in \N^n} I_1^{k_1} \cdots I_n^{k_n} x_1^{k_1} \cdots x_n^{k_n}$$ is the multi-Rees algebra of the ideals $I_1, \ldots, I_n$ in $\rmR$.
\item
Let $I$ be an ideal in $\rmR$. Then, for the fan and the fan linear map in Example~\ref{f23} and Example~\ref{ff23}, respectively, we have
\begin{equation}\label{BRp}
\cB_{\bff, \cF, I} = \sum_{(r,s) \in \N^2} I^{\max(3r, 2s)} x_1^r x_2^s,
\end{equation} which is the same as $\cB _\rmR ((p^3), (p^2))$, when $\rmR$ is an UFD and $p$ a prime element of $\rmR$. We will write $\cB_{\rm R, p} (3,2)$ to denote this $\rmR$-algebra.

\end{enumerate}
}
\end{remark}

\par Let $\cB_{\bff,\cF,\bfI}$ be a fan algebra.  When $\rmR$ is Noetherian, this $\rmR$-algebra is finitely generated. Indeed, a set of generators of this algebra over $\rmR$ was given in \cite[Theorem 2.3.7]{mal13a} (see also \cite{mal13b,EM14}) for the case $\rmR$ being a domain  and $|\cF|=(\R_{\geq0})^m.$ It is easy to see that 
such a proof can be easily extended to the case of a fan $\cF$ with the properties given above, and any Noetherian ring $\rmR,$ as we recall here:  denote with $C_1,\ldots, C_\ell$ the maximal cones of $\cF,$ and  for $i=1,\ldots,\ell,$ set $Q_i=C_i\cap\Z^m.$ This {\it pointed monoid} has a unique  Hilbert Basis, which we denote with:
\begin{equation}\label{fine}
\cH_{Q_i}=\{\bfv_{i1},\ldots, \bfv_{ig_i}\}\subset\Z^m.
\end{equation}
As $\rmR$ is Noetherian, for each $\bfv_{ij}$, the ideal $I_1^{f_1(\bfv_{ij})}\cdot\ldots\cdot I_n^{f_n(\bfv_{ij})}$ is finitely generated, i.e.
$$I_1^{f_1(\bfv_{ij})}\cdot\ldots\cdot I_n^{f_n(\bfv_{ij})}=\langle r_{ij1},\ldots, r_{ijh_{ij}}\rangle.
$$
The next result follows straightforwardly from Theorem 2.3.7 in \cite{mal13a}.
\begin{theorem}\label{mt1}
With notations and assumptions as above, $\cB_{\bff,\cF,\bfI}$ is generated  as an algebra over $\rmR$ by the set
\begin{equation}\label{sg}
\{r_{ijh}\, \bfx^{\bfv_{ij}}; \,i=1,\ldots,\ell,\,j=1,\ldots, g_i,\,h=1,\ldots, h_{ij}\}.
\end{equation}
\end{theorem}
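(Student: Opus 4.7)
The plan is to fix an arbitrary homogeneous generator $a\,\bfx^\bfv$ of $\cB_{\bff,\cF,\bfI}$, with $\bfv\in|\cF|\cap\Z^m$ and $a\in I_1^{f_1(\bfv)}\cdots I_n^{f_n(\bfv)}$, and show that it lies in the $\rmR$-subalgebra $\cB'$ generated by the set \eqref{sg}; by $\rmR$-linearity, this is enough. Since $|\cF|$ is the union of its maximal cones, first I would choose an index $i\in\{1,\ldots,\ell\}$ such that $\bfv\in C_i$, so that $\bfv\in Q_i$. Gordan's lemma applied to the strongly convex rational polyhedral cone $C_i$ guarantees that $Q_i$ is a finitely generated monoid with Hilbert basis \eqref{fine}, hence we may write
$$
\bfv=\sum_{j=1}^{g_i} n_{ij}\,\bfv_{ij}, \qquad n_{ij}\in\N.
$$

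Next I would use the hypothesis that each $f_k$ is $\N$-linear on the face $C_i$ of $\cF$; applying this to the decomposition above gives $f_k(\bfv)=\sum_{j=1}^{g_i}n_{ij}\,f_k(\bfv_{ij})$ for every $k$. Substituting into the product of ideals yields
$$
\prod_{k=1}^{n} I_k^{f_k(\bfv)}
=\prod_{j=1}^{g_i}\Bigl(\prod_{k=1}^{n} I_k^{f_k(\bfv_{ij})}\Bigr)^{n_{ij}}
=\prod_{j=1}^{g_i}\langle r_{ij1},\ldots,r_{ijh_{ij}}\rangle^{n_{ij}}.
$$
Consequently, $a$ is an $\rmR$-linear combination of products of the form $\prod_{j,h} r_{ijh}^{e_{ijh}}$ with $\sum_h e_{ijh}=n_{ij}$ for each $j$. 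Combining this with the monomial factorization $\bfx^\bfv=\prod_{j=1}^{g_i}(\bfx^{\bfv_{ij}})^{n_{ij}}$, we obtain $a\,\bfx^\bfv$ as an $\rmR$-linear combination of products of elements from \eqref{sg}, proving $a\,\bfx^\bfv\in\cB'$.

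The potential obstacle is purely that of verifying the hypotheses used: Gordan's lemma requires $C_i$ to be a rational polyhedral strongly convex cone, which is built into the definition of a fan, and the cited generalization of \cite[Theorem 2.3.7]{mal13a} from $|\cF|=(\R_{\geq 0})^m$ to an arbitrary convex support $|\cF|$ is exactly what allows the reduction to a single maximal cone. Note that the subadditivity of each $f_k$ on $|\cF|$ is not invoked for this generation argument; it only enters in ensuring that $\cB_{\bff,\cF,\bfI}$ is closed under multiplication and therefore is an $\rmR$-subalgebra of $\rmR[x_1^{\pm 1},\ldots,x_m^{\pm 1}]$.
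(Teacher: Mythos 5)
Your proof is correct and takes essentially the same route as the paper (which defers to the argument of \cite[Theorem 2.3.7]{mal13a}): reduce to a homogeneous element of degree $\bfv$, locate $\bfv$ in a single maximal cone $C_i$, decompose $\bfv$ over the Hilbert basis $\cH_{Q_i}$, and invoke the $\N$-linearity of each $f_k$ on $C_i$ to factor the ideal $I_1^{f_1(\bfv)}\cdots I_n^{f_n(\bfv)}$ accordingly. Your remark that subadditivity of the $f_k$ is needed only to make $\cB_{\bff,\cF,\bfI}$ closed under multiplication, and not for the generation argument itself, is also accurate.
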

In general \eqref{sg} is far from being a minimal set of generators of the fan algebra, although in some cases -like in the intersection algebra of principal ideals, see \cite{mal13b}- it has been shown that it is. It is also of interest to compute a  whole resolution of $\cB_{\bff,\cF,\bfI}$ as an $\rmR$-algebra, but very little is known at the present about this ring in general. In this text, we describe completely the case $m=2,$ with all  the ideals being principal, i.e. when $I_j=\langle p_j\rangle$ for a suitable $p_j\in\rmR,$ and  $|\cF|\neq\R^2$ (and hence contained in a half-plane, since the support of the fan is assumed to be convex). This situation includes the intersection algebras of principal ideals, hence we generalize and extend the results in \cite{mal13b}. Our main result given in Theorem \ref{mresult} describes a  resolution of this $\rmR$-algebra. If $\rmR$ is $*$-local, under some mild conditions on the functions $\{f_i\}_{1\leq i\leq n}$ and the elements $\{p_i\}_{1\leq i\leq n},$ we show that the resolution is minimal graded with the induced $\Z$-grading. If in addition $\rmR$ is a $*$-local ring and all the $p_i$'s  are nonzero divisors in the maximal ideal, then we can describe combinatorially the $\Z$-graded Betti numbers. Along the way, we disprove Conjecture 3.2.3
in~\cite{mal13b}. 

\subsection{Statement of the main result}\label{main}

From now on, we will work with principal ideals, and fans $\cF$ of cones in $\R^2$ such that $|\cF|$ is also convex, and contained in a half-plane.  In $\R^2$ we have the advantage that there is a standard orientation for both cones and vectors which we use. So assume without loss of generality that $C_1,\ldots, C_\ell,$ the maximal cones in $\cF,$ are sorted clockwise. In addition, we will also assume that for each $i=1,\ldots, \ell,$ the elements in the set $\cH_{Q_i} =\{\bfv_{i1},\ldots, \bfv_{ig_i}\}$  which was defined in \eqref{fine}, are also sorted clockwise.

We will say that the family of functions $\bff$ is {\em strict} with respect to the fan $\cF$ if for each $1\leq i<\ell,$ there is $k_i\in\{1,\ldots, n\}$ such that $f_{k_i}$ is not linear on $C_i\cup C_{i+1}.$ Note that if $\bff$ is not strict with respect to $\cF$ one can take a coarser fan $\cF'$  such that $\bff$ is also a family of piece-wise linear functions compatible with $\cF'$ and strict with respect to this new fan, and moreover from the definition we get
$$\cB_{\bff,\cF,\bfp}=\cB_{\bff,\cF',\bfp},
$$
so we can always assume w.l.o.g. that the set of functions is strict with respect to the fan. 

Set $\cH:=\cup_{i=1}^\ell \cH_{Q_i},$ and denote with $M$ the cardinality of this set. Let $p_1,\ldots, p_n\in\rmR.$ In the sequel, we will set $\bfp^{\bff(\bfv)}=p_1^{f_1(\bfv)}\ldots\, p_n^{f_n(\bfv)},$ for short. For each $\bfv\in\cH,$ let $Y_\bfv$ be a new variable, and set $\bfY=\{Y_\bfv,\,\bfv\in\cH\}.$ 
In light of Theorem \ref{mt1}, we  have an epimorphism
\begin{equation}\label{map}
\begin{array}{ccc}
\rmR[\bfY]&\stackrel{\varphi_0}{\to}& \cB_{\bff,\cF, \bfp}\\
Y_\bfv&\mapsto&\bfp^{\bff(\bfv)}\bfx^{\bfv}.
\end{array}
\end{equation}
Note that $\varphi_0$ is a $\Z^2$-graded map if we declare $\deg(\bfY_\bfv)=\bfv,$ and $\deg(r)=(0,0)$ for all $r\in\rmR.$ If $\rmR$ is $*$-local with maximal ideal $\fm$, then we can see that $\rmR[\bfY]$ is also $*$-local with maximal ideal $(\fm, \bfY)$ with the degree of a monomial given by the sum of its exponents, while the elements of $\rmR$ have degree $0$. We refer to this grading as the induced $\Z$-grading.

The following is the main result of this paper.

\begin{theorem}\label{mresult}
If $|\cF|$ is a proper rational cone contained in $\R^2,$ then there exists $M\in\N$ such that the following is a $\Z^2$-graded resolution of  $\cB_{\bff,\cF,\bfp}$ over $\rmR[\bfY]$:
\begin{equation}\label{eres}
0\to\rmR[\bfY]^{M-2}\stackrel{\varphi_{M-2}}{\to}\ldots\to\rmR[\bfY]^{3{M-1\choose 4}}\stackrel{\varphi_3}{\to}\rmR[\bfY]^{2{M-1\choose 3}}\stackrel{\varphi_2}{\to}\rmR[\bfY]^{M-1\choose 2}\stackrel{\varphi_1}{\to}\rmR[\bfY]\stackrel{\varphi_0}{\to}
\cB_{\bff,\cF,\bfp}\to0 .
\end{equation}
The maps $\varphi_j:\rmR[\bfY]^{j{M-1\choose j+1}}\to\rmR[\bfY]^{(j-1){M-1\choose j}}$ are defined in \eqref{phi1} for $j=1$,  in \eqref{phi2} for $j=2$ and their recursive construction is shown in Section \ref{hs}.
\par 
The resolution is minimal graded over $\rmR[\bfY]$ if $\rmR$ is $*$- local with maximal ideal $\fm$, $\bff$ is strict with respect to $\cF,$ none of the $p_i$'s is a zero divisor in $\rmR,$ and all of them belong to $\fm$. Here, we regard $\rmR[\bfY]$ as graded with the induced $\Z$-grading.
\end{theorem}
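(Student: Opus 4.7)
The Betti numbers $j\binom{M-1}{j+1}$ in the statement coincide exactly with those of the Eagon--Northcott resolution of the ideal of $2\times 2$ minors of a $2\times (M-1)$ matrix. This numerical match is the key heuristic: it suggests that the presentation ideal $\ker(\varphi_0)$ can be realized as the ideal of $2\times 2$ minors (up to the $\bfp$-twisting) of some $2\times(M-1)$ matrix $A$ with entries in $\rmR[\bfY]$, and that the complex in \eqref{eres} is the Eagon--Northcott complex associated to $A$. The plan is therefore to (i) construct this matrix $A$ using the continued-fraction combinatorics of the Hilbert basis of a proper cone in $\R^2$, and (ii) verify exactness and minimality.

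The first step is to exploit the fact that since $|\cF|$ is a proper rational cone in $\R^2$, the combined Hilbert basis $\cH=\{\bfv_1,\ldots,\bfv_M\}$ is totally ordered clockwise, and for each $1\leq i<j\leq M$ the sum $\bfv_i+\bfv_j$ lies in the cone and admits a unique decomposition $\bfv_i+\bfv_j=\alpha_{ij}\bfv_{k}+\beta_{ij}\bfv_{k+1}$ within the appropriate maximal cone. Twisting by the $\bfp$-monomial that restores the $\Z^2$-grading, one obtains a binomial-type syzygy
\[
Y_{\bfv_i}Y_{\bfv_j}\;-\;\bfp^{\,\bff(\bfv_i)+\bff(\bfv_j)-\alpha_{ij}\bff(\bfv_k)-\beta_{ij}\bff(\bfv_{k+1})}\,Y_{\bfv_k}^{\alpha_{ij}}Y_{\bfv_{k+1}}^{\beta_{ij}},
\]
yielding $\binom{M-1}{2}$ generators of $\ker\varphi_0$ (after a standard reduction showing the remaining pairs are redundant), which defines $\varphi_1$. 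I would then organize these as the $2\times 2$ minors of a $2\times(M-1)$ matrix whose $k$-th column records the local triple $(\bfv_{k},\bfv_{k+1},\bfv_{k+2})$ together with its $\bfp$-correction, and define $\varphi_j$ for $j\geq 2$ by the Eagon--Northcott differentials on this matrix, indexing the rank-$j\binom{M-1}{j+1}$ free module by pairs (a $(j+1)$-subset of columns, a distinguished position inside it). Exactness of the resulting complex would follow either by verifying the Buchsbaum--Eisenbud rank/depth criterion (the ideal of minors has codimension $M-2$, matching the length of \eqref{eres}, and $\rmR[\bfY]$ is a polynomial ring over $\rmR$), or by a direct induction on $M$ via the mapping cone obtained by removing the extremal Hilbert basis element $\bfv_M$, which reduces to the fan algebra over the smaller cone with $M-1$ generators.

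For minimality, each entry of the matrix representing $\varphi_j$ is, up to sign, either a pure monomial in the $Y_\bfv$'s (automatically in the maximal ideal $(\fm,\bfY)$ of $\rmR[\bfY]$) or a scalar multiple of such a monomial by $\bfp^{e}$ with $e=\bff(\bfv_i)+\bff(\bfv_j)-\alpha_{ij}\bff(\bfv_k)-\beta_{ij}\bff(\bfv_{k+1})$. When the pair $(i,j)$ straddles a wall of $\cF$, the strictness hypothesis on $\bff$ forces at least one component of $e$ to be strictly positive, while within a single maximal cone the $\bff_i$'s are linear, so $e$ vanishes but the corresponding entry is then purely monomial in $\bfY$. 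In either case, combined with $p_i\in\fm$ and the nonzero-divisor condition (which guarantees the $\bfp^e$ factor does not collapse), every entry lies in $(\fm,\bfY)$, proving minimality. The main obstacle is Step 2: setting up the matrix $A$ so that the $\bfp$-twists assemble compatibly into genuine Eagon--Northcott differentials (equivalently, verifying $\varphi_{j-1}\circ\varphi_j=0$ after the twisting) and carrying out the exactness induction, since the combinatorics of the $\bff(\bfv)$-exponents along the continued-fraction expansion of $\cH$ must align cell by cell.
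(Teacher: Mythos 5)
Your observation that the Betti numbers $\beta_i = i\binom{M-1}{i+1}$ coincide with those of an Eagon--Northcott complex for a $2\times(M-1)$ matrix is correct and is a genuine insight into the structure of the answer. However, the proposed route through Eagon--Northcott has a real gap: it requires $\ker(\varphi_0)$ to \emph{be} the ideal $I_2(A)$ of $2\times2$ minors of some $2\times(M-1)$ matrix over $\rmR[\bfY]$, and this is not true in general. What is true is that the \emph{initial ideal} $\mathrm{Lt}(\ker\varphi_0)=\langle Y_iY_j:\ \bfv_i,\bfv_j\ \text{non-adjacent}\rangle$ is the Stanley--Reisner ideal of a path, which is the determinantal (Hankel) ideal $I_2\!\begin{pmatrix}Y_1&\cdots&Y_{M-1}\\ Y_2&\cdots&Y_M\end{pmatrix}$; this is the source of the numerological match. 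But the $\bfp$-twists on the lower-order terms of the binomials $\bfS_{\bfv,\bfv'}$ destroy the Pl\"ucker compatibility needed for a determinantal realization. One can see this concretely in the running Example of the paper ($\cB_{\rmR,p}(3,2)$, $M=5$): there $\ker\varphi_0=\langle Y_1Y_3-Y_2^2,\ Y_1Y_4-pY_2,\ Y_1Y_5-p^2Y_4,\ Y_2Y_4-pY_3,\ Y_2Y_5-pY_4^2,\ Y_3Y_5-Y_4^3\rangle$, and the relations involving $Y_5$ force any candidate $2\times4$ matrix $A$ with $I_2(A)=\ker\varphi_0$ to have a column of the form $(Y_4,\,Y_5)^t$ (or $(1,\,Y_5)^t$), in which case the remaining $2\times2$ minors come out as strict multiples $p(Y_1Y_4-pY_2)$ etc., so $I_2(A)\subsetneq\ker\varphi_0$. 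In short, your Step 2 (``setting up the matrix $A$ so that the $\bfp$-twists assemble into genuine Eagon--Northcott differentials'') cannot be carried out; no such $A$ exists in general, so the Buchsbaum--Eisenbud exactness criterion cannot be invoked for this $A$.

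The paper proceeds instead by deforming along the Gr\"obner degeneration: it first shows (Theorem \ref{mt}) that $\bfS_{\bff,\cF,\bfp}$ is a Gr\"obner basis of $\ker\varphi_0$, then uses Schreyer's theorem (Theorem \ref{3713}) to produce a Gr\"obner basis $\bfS^{(2)}$ of $\mathrm{syz}(\bfS_{\bff,\cF,\bfp})$ for the induced order, and iterates this recursively (Theorems \ref{machiv}, \ref{machiv2}, \ref{machiv3}). The combinatorial counting in Lemma \ref{comb} produces exactly the ranks $j\binom{M-1}{j+1}$, and minimality follows from strictness of $\bff$ together with Lemmas \ref{au1}, \ref{au2}, \ref{yyavan} (and their higher analogues), which guarantee that the entries of each $\varphi_j$ lie in $(\fm,\bfY)$. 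Your proposal reaches the same ranks for the right reason (the degeneration preserves Betti numbers here), and your minimality discussion is essentially parallel to the paper's, but the construction of the maps $\varphi_j$ themselves must go through the iterated syzygy/Schreyer machinery rather than through a single determinantal presentation. Your alternative idea of inducting on $M$ by removing an extremal Hilbert basis element $\bfv_M$ via a mapping cone is an interesting suggestion and might be made to work, but as written it is not fleshed out and is not what the paper does.
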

Recall from Definition \ref{1.2} that a proper rational cone is neither the whole space nor contained in a line, so in particular we have $M\geq2.$
When  $|\cF|$ is a half-line, then clearly we have that  the map $\varphi_0$ of \eqref{map} is an isomorphism, and hence the resolution stops there. If $|\cF|$ is a line, then 
$\cB_{\bff,\cF,\bfp}$ is isomorphic to $\rmR[Y_1,Y_{-1}]/\langle Y_1Y_{-1}-\bfp^{\bff(1)+\bff(-1)}\rangle,$ so finding its resolution is also not a hard task. 

The situation where $|\cF|=\R^2$ seems to be more tricky, and a resolution such as \eqref{eres} does not hold anymore in general, see the discussion in Section \ref{6}.

Theorem \ref{mresult} follows directly from Theorems \ref{mt}, \ref{machiv}, \ref{machiv2} and \ref{machiv3}. The maps $\varphi_j,\,j=1, 2, \ldots$ are constructed recursively starting from \eqref{phi1},  and \eqref{phi2}. We can actually make explicit the description of all the Betti numbers of this ring when the resolution \eqref{eres} is minimal.
\par

We can state an interesting consequence of our main result to intersection algebras. To that end, let $k$ be a field, $p_1, \ldots, p_n$ indeterminates, $\rmR= k [p_1, \ldots, p_n]$ and $\bfa = (a_1, \ldots, a_n), \bfb=(b_1, \ldots, b_n)$ two integer vector with positive entries. Consider the intersection algebra $\cB(\bfa, \bfb)$ as introduced in Remark~\ref{ff}.

As specified in Remark~\ref{ff}, the $R$-algebra $\cB(\bfa, \bfb)$ is Cohen-Macaulay of dimension $n+2$. To see $\cB(\bfa, \bfb)$ as a fan algebra, consider the nonnegative quadrant in $\Z^2$ and the fan generated by the rays of slope $a_i/b_i, i=1, \ldots, n$. Let us call this fan $\cF$. The fan functions are given by $f_i(r,s) = \max(a_ir, b_is)$, $i=1, \ldots, n$ and these functions are strict with respect to the fan $\cF$. If  $M$ equals the cardinality of $\cH$, then we know that $M$ is the minimal number of generators of $\cB_{\bff,\cF, \bfp} = \cB(\bfa, \bfb)$, as in Theorem~\ref{mresult}.

\begin{corollary}
Using the notation and assumptions above,  $\cB (\bfa, \bfb)$ is Gorenstein if and only if $M=3$. In this case, $a_1=b_1, \ldots, a_n=b_n$ and $$\cB(\bfa, \bfb) = \frac{k[p_1, \ldots, p_n, A, B, C]}{(AB - p_1^{a_1} \cdots p_n^{a_n} C)}.$$

\end{corollary}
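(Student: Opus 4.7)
The plan is to combine Theorem \ref{mresult} with the standard characterization of the Gorenstein property via the last Betti number of a minimal free resolution, and then carry out a short geometric analysis of the case $M=3$. First I would check that Theorem \ref{mresult} applies to $\cB(\bfa,\bfb)$: the base ring $\rmR=k[p_1,\ldots,p_n]$ is $*$-local with graded maximal ideal $\fm=(p_1,\ldots,p_n)$, the elements $p_1,\ldots,p_n$ are nonzerodivisors of $\rmR$ contained in $\fm$, and strictness of $\bff=(f_1,\ldots,f_n)$ with $f_i(r,s)=\max(a_ir,b_is)$ with respect to $\cF$ holds because each interior ray of $\cF$ is, by construction, the locus $a_kr=b_ks$ of some $f_k$, across which $f_k$ fails to be linear. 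Thus Theorem \ref{mresult} provides a minimal $\Z$-graded resolution of $\cB(\bfa,\bfb)$ over $\rmR[\bfY]$ of length $M-2$ whose last free module has rank $M-2$.

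Since $\rmR[\bfY]$ is a polynomial ring over the field $k$, hence regular, and $\cB(\bfa,\bfb)$ is a Cohen--Macaulay quotient of it, the Cohen--Macaulay type (computed at the graded maximal ideal) equals the rank of the last free module in the minimal graded resolution over $\rmR[\bfY]$. Therefore $\cB(\bfa,\bfb)$ is Gorenstein if and only if $M-2=1$, that is, $M=3$, which proves the first assertion.

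It remains to identify $\cB(\bfa,\bfb)$ when $M=3$. The primitive generators $(1,0)$ and $(0,1)$ of the extreme rays of $|\cF|=(\R_{\geq0})^2$ always belong to $\cH$, so $|\cH|=3$ forces exactly one additional Hilbert-basis element $(k_1,k_2)$, hence exactly one interior ray of $\cF$ with primitive generator $(k_1,k_2)$ and two maximal cones $\sigma_1,\sigma_2$ generated by $\{(1,0),(k_1,k_2)\}$ and $\{(k_1,k_2),(0,1)\}$ respectively. A two-dimensional rational cone has its pair of primitive extreme-ray generators as Hilbert basis precisely when it is unimodular; applied to $\sigma_1$ this forces $k_2=1$ and applied to $\sigma_2$ it forces $k_1=1$, so the single interior ray has slope $1$ and $a_i=b_i$ for every $i$. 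The three generators of $\cB(\bfa,\bfa)$ attached to $(1,0),(0,1),(1,1)$ are $A=\bfp^\bfa x_1$, $B=\bfp^\bfa x_2$, $C=\bfp^\bfa x_1x_2$, and the identity $AB=\bfp^{2\bfa}x_1x_2=\bfp^\bfa C$ yields the binomial $AB-\bfp^\bfa C$; by Theorem \ref{mresult} there is exactly $\binom{M-1}{2}=1$ minimal first syzygy, so this single binomial is a complete set of relations, giving the displayed formula for $\cB(\bfa,\bfa)$. I expect the main obstacle to be the combinatorial $M=3$ analysis, but in dimension two it reduces cleanly to the unimodularity of both maximal cones.
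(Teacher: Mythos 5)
Your proof is correct and follows essentially the same route as the paper's: both rely on Theorem~\ref{mresult} to read off the minimal graded resolution, identify the Cohen--Macaulay type with the rank $M-2$ of the last free module (the paper invokes Auslander--Buchsbaum explicitly plus a citation to Huneke's Proposition~3.2; you instead observe directly that $\rmR[\bfY]$ is regular and the resolution has length $M-2$), and conclude Gorenstein $\iff M=3$. Your added unimodularity argument that $M=3$ forces the single interior ray to be $\R_{\geq0}(1,1)$ and hence $a_i=b_i$ just fleshes out what the paper leaves as "easy to see," so the two arguments are in substance the same.
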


\begin{proof}
Note that $\cB(\bfa, \bfb)$ is a quotient of $\rmR[\bfY]$ where $\bfY=\{Y_1, \dots, Y_M\}$. But $\cB(\bfa, \bfb)$ is Cohen-Macaulay, so it is a Cohen-Macaulay $\rmR [\bfY]$-module of finite projective dimension. By the Auslander-Buchsbaum theorem, we have that its projective dimension over $\rmR[\bfY]$ equals $n+M -(n+2)= M-2.$ Our Theorem~\ref{mresult}, together with Proposition 3.2 in~\cite{H}, shows that the type of $\cB(\bfa, \bfb)$ is given the $(M-2)$-th Betti number in our resolution, after localizing at $(\fm, \bfY)$. The formula for the Betti numbers is $\beta_0=1, \beta_i= i \cdot \binom{M-1}{i+1}$, for $i\geq 1$. So, either $M \neq 2$ since the $\bfa, \bfb$ have positive entries, so the type equals $\beta_{M-2}=(M-3+1)\binom{M-1}{M-1} = M-2$.

Finally, $\cB(\bfa, \bfb)$ is Gorenstein if and only if its type is $1$. So, in this case, $M=3$. It is easy to see that this is equivalent to $a_i=b_i$ for all $i=1, \ldots, n$. Moreover, by applying our techniques or by direct computation, one sees that
$$\cB(\bfa, \bfa)  =  \frac{k[p_1, \ldots, p_n, A, B, C]}{(AB - p_1^{a_1} \cdots p_n^{a_n}C)}.$$
\end{proof}

\begin{remark}
{\rm This result generalizes Corollary 2.23 in~\cite{EM14} and Proposition 2.10 in~\cite{ES}.}
\end{remark}

The paper is organized as follows. Section \ref{background} is expository in nature: it describes an algorithmic procedure to construct the Hilbert basis for an integer cone in the plane and outlines the fundamental facts about Gr\"obner bases needed later. Section \ref{m=2} is dedicated to the construction of the presentation ideal of the fan algebra. In Section \ref{varpi1} we study in detail the map $\varphi_1$ of the resolution, while the analysis of the remaining maps and the proof of the main Theorem are completed in Section \ref{hs}.  The paper concludes with some examples which illustrate that the situation for when the fan is the whole plane is more complicated than the results presented here, and hence new ideas are needed in order to tackle this and more general scenarios.

\bigskip
\section{Hilbert and Gr\"obner Bases}\label{background}

We present here a number of results on Hilbert and Gr\"obner bases that are essential in our subsequent sections. This material is generally known to the experts, but since we do not know a good reference for the statements that we need, we include them here for the benefit of the reader.

\subsection{Hilbert bases of two dimensional strongly rational cones} We need a criteria and an algorithm to construct a Hilbert basis of the monoid of integer points lying in the cone generated by $\{(b,a),\,(b',a')\}\subset\R^2.$ Some of the facts below can be found without proof in~\cite{MS}, Example 7.19.

First we start with the following easy lemma, whose proof is left to the reader.

\begin{lemma}
\label{easycase}
Let $\bfv, \bfw$ be two integer vectors such that $\det(\bfv, \bfw) =1$. Then the Hilbert basis of the monoid of integer points lying in the cone generated by $\bfv, \bfw$ is $\{\bfv, \bfw\}$.
\end{lemma}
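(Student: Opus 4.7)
The plan is to show both that every lattice point of $\mathrm{cone}(\bfv,\bfw)$ lies in $\N\bfv+\N\bfw$, and that neither $\bfv$ nor $\bfw$ admits a nontrivial decomposition as a sum of two nonzero lattice points of the cone; together these two facts identify $\{\bfv,\bfw\}$ as the unique Hilbert basis.

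First I would pick an arbitrary $\bfu\in\mathrm{cone}(\bfv,\bfw)\cap\Z^2$. Because $\det(\bfv,\bfw)=1\neq 0$, the vectors are $\R$-linearly independent, so there exist unique $\alpha,\beta\in\R_{\geq 0}$ with $\bfu=\alpha\bfv+\beta\bfw$. Cramer's rule then gives
\[
\alpha=\frac{\det(\bfu,\bfw)}{\det(\bfv,\bfw)}=\det(\bfu,\bfw),\qquad
\beta=\frac{\det(\bfv,\bfu)}{\det(\bfv,\bfw)}=\det(\bfv,\bfu),
\]
and both values are integers because $\bfu,\bfv,\bfw\in\Z^2$. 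Hence $\alpha,\beta\in\N$ and $\bfu\in\N\bfv+\N\bfw$, so $\{\bfv,\bfw\}$ generates the monoid.

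Next I would establish that $\bfv$, and symmetrically $\bfw$, is irreducible. The hypothesis $\det(\bfv,\bfw)=1$ forces $\bfv=(a,b)$ to be primitive: any common divisor $d$ of $a$ and $b$ divides $\det(\bfv,\bfw)=1$, so $\gcd(a,b)=1$. Suppose now that $\bfv=\bfa+\bfb$ with $\bfa,\bfb\in(\mathrm{cone}(\bfv,\bfw)\cap\Z^2)\setminus\{0\}$. By the previous step I can write $\bfa=\alpha_1\bfv+\beta_1\bfw$ and $\bfb=\alpha_2\bfv+\beta_2\bfw$ with $\alpha_i,\beta_i\in\N$. Comparing coefficients in the basis $\{\bfv,\bfw\}$ yields $\alpha_1+\alpha_2=1$ and $\beta_1+\beta_2=0$, whence $\beta_1=\beta_2=0$ and exactly one of $\alpha_1,\alpha_2$ is $0$, contradicting that both $\bfa$ and $\bfb$ are nonzero. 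The same reasoning applies to $\bfw$, so both must sit in any generating set of the monoid; combined with the first step this gives $\cH_{\mathrm{cone}(\bfv,\bfw)\cap\Z^2}=\{\bfv,\bfw\}$.

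The argument presents no conceptual obstacle: the crucial input is simply the integrality of the Cramer coefficients under the assumption $\det(\bfv,\bfw)=1$, which makes both the generation statement and the irreducibility of $\bfv$ and $\bfw$ essentially immediate.
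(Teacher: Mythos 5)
Your proof is correct. The paper deliberately omits a proof ("whose proof is left to the reader"), and your argument is exactly the expected elementary one: Cramer's rule with unit determinant forces the coordinates $\alpha=\det(\bfu,\bfw)$ and $\beta=\det(\bfv,\bfu)$ to be nonnegative integers, giving generation, and the coefficientwise comparison in the basis $\{\bfv,\bfw\}$ rules out any nontrivial decomposition of $\bfv$ or $\bfw$. The only extraneous bit is the remark that $\bfv$ is primitive; you state it but never use it, since your irreducibility argument runs entirely through the coefficient comparison.
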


\begin{proposition}\label{criteria}
Suppose that the sequence $\{\bfv_1,\ldots,\bfv_N\}\subset\Z^2$ lies in an open half-plane (i.e. there exists $\bfv$ such that $\langle \bfv,\bfv_i\rangle>0 \ \forall i=1,\ldots, N),$ and is such that 
\begin{enumerate}
\item[a)]$\det(\bfv_{i+1},\bfv_i)=1,$ for $i=1,\ldots, N-1.$ 
\item[b)] For every $i=1,\ldots, N-2,\, \bfv_{i+1}$ lies in the interior of the parallelogram whose vertices are 
${\bf0},\,\bfv_i,\,\bfv_N,\,\bfv_i+\bfv_N.$
\end{enumerate}
Then
\begin{enumerate}
\item $\det(\bfv_{j},\bfv_i)\geq1$ for all $j>i.$
\item For all $j=1,\ldots, N,$ if we write 
$
\bfv_j=\alpha_j\bfv_1+\beta_j\bfv_N$
with $\alpha_j,\,\beta_j\in\R,$ then \begin{itemize}
\item $\alpha_j,\,\beta_j\geq0$ for all $j=1,\ldots, N,$
\item the sequence $\{\alpha_j\}_{j=1,\ldots, N}$ is strictly decreasing.
\end{itemize}
\item The family $\{\bfv_1,\ldots, \bfv_N\}$ is a Hilbert basis of the monoid generated by the integer points lying in the convex hull of the positive rays generated by $\bfv_1$ and $\bfv_N.$
\item There is a unique Hilbert basis of this monoid.
\end{enumerate}
\end{proposition}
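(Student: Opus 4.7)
The plan is to prove the four statements in the order (2), (1), (3), (4), since (1) follows cleanly from (2) by computing determinants in $\{\bfv_1,\bfv_N\}$-coordinates, (3) reduces to the case of adjacent pairs by virtue of (a), and (4) is essentially a general fact about pointed monoids.

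For part (2), I would exploit condition (b) directly: since $\bfv_{i+1}$ lies in the open parallelogram with vertices $0,\bfv_i,\bfv_N,\bfv_i+\bfv_N$, there exist unique $\gamma_i,\delta_i\in(0,1)$ with $\bfv_{i+1}=\gamma_i\bfv_i+\delta_i\bfv_N$. Substituting the expression $\bfv_j=\alpha_j\bfv_1+\beta_j\bfv_N$ into this identity yields the recursion $\alpha_{i+1}=\gamma_i\alpha_i$ and $\beta_{i+1}=\gamma_i\beta_i+\delta_i$, with initial condition $\alpha_1=1$, $\beta_1=0$. This immediately gives $\alpha_j>0$ for $j<N$ and $\beta_j\geq 0$ for all $j$, and the strict decrease of $\{\alpha_j\}$ follows because each multiplicative factor $\gamma_i$ lies strictly below $1$ (with the boundary step $\alpha_{N-1}>\alpha_N=0$ being clear).

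For part (1), I would use the bilinearity of the determinant together with part (2) to write $\det(\bfv_j,\bfv_i)=(\alpha_j\beta_i-\alpha_i\beta_j)\det(\bfv_1,\bfv_N)$. Introducing the ratios $\rho_j:=\beta_j/\alpha_j$ (defined for $j<N$), the recursion yields $\rho_{i+1}-\rho_i=\delta_i/(\gamma_i\alpha_i)>0$, so $\rho$ is strictly increasing; hence $\alpha_j\beta_i-\alpha_i\beta_j=\alpha_i\alpha_j(\rho_i-\rho_j)$ is negative for $i<j<N$, and the boundary case $j=N$ reduces to $\alpha_N\beta_i-\alpha_i\beta_N=-\alpha_i<0$. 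Feeding $\det(\bfv_{i+1},\bfv_i)=1$ back into the same formula forces $\det(\bfv_1,\bfv_N)<0$, and the product of two negatives yields $\det(\bfv_j,\bfv_i)>0$. Since the $\bfv_j$ are integer vectors, this determinant is in fact a positive integer, hence at least $1$.

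For part (3), the key observation is that condition (a) makes each consecutive pair $\{\bfv_i,\bfv_{i+1}\}$ a $\Z$-basis of $\Z^2$, while part (2) shows the $\bfv_j$ are strictly ordered by angle, so the cone generated by $\bfv_1$ and $\bfv_N$ decomposes as the union of the subcones $\mathrm{cone}(\bfv_i,\bfv_{i+1})$ for $i=1,\ldots,N-1$. Any integer $\bfw$ in the big cone therefore lies in one of these subcones, and writing $\bfw=c\bfv_i+d\bfv_{i+1}$ with $c,d\in\Z$ (possible by integrality of the basis), its membership in that subcone forces $c,d\geq 0$. For part (4) I would invoke the general fact that the Hilbert basis of a pointed finitely generated commutative monoid is unique, being the set of irreducible elements; irreducibility of each $\bfv_j$ follows because the sub-cone decomposition from (3) prevents it from being written as a sum of two nonzero monoid elements. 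The main obstacle throughout is the sign bookkeeping in (1), since the sign of $\det(\bfv_1,\bfv_N)$ must be deduced from (a) rather than assumed \emph{a priori}; once that is settled, everything else flows from the recursion extracted from (b).
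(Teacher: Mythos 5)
Your reordering — prove (2) first directly from (b), then derive (1) from the formula $\det(\bfv_j,\bfv_i)=(\alpha_j\beta_i-\alpha_i\beta_j)\det(\bfv_1,\bfv_N)$ together with the monotonicity of the ratios $\rho_j=\beta_j/\alpha_j$, pinning the sign of $\det(\bfv_1,\bfv_N)$ by plugging in $j=i+1$ — is correct and a bit cleaner than the paper's route, which first proves (1) by a separate induction on $\bfv_3 = \lambda_1\bfv_1 + \lambda_2\bfv_2$, using the half-plane hypothesis to determine the signs of $\lambda_1,\lambda_2$. Both proofs of (2) rest on the same recursion $\alpha_{i+1}=\gamma_i\alpha_i$, $\beta_{i+1}=\gamma_i\beta_i+\delta_i$ with $\gamma_i,\delta_i\in(0,1)$ from (b). Your (3) for generation (decompose the cone into $\mathrm{cone}(\bfv_i,\bfv_{i+1})$ and invoke (a) for unimodularity) matches the paper, and invoking the general uniqueness of Hilbert bases of pointed affine monoids is a valid conceptual shortcut for (4), where the paper instead gives a self-contained matrix argument (nonnegative integer matrices that are inverses of one another are permutation matrices).

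The gap is in minimality. Part (3) asserts that $\{\bfv_1,\ldots,\bfv_N\}$ is a Hilbert basis, i.e.\ a \emph{minimal} generating set; you defer minimality to (4) via irreducibility of each $\bfv_j$, but the sentence ``irreducibility \ldots\ follows because the sub-cone decomposition from (3) prevents it from being written as a sum of two nonzero monoid elements'' restates what is to be proved rather than proving it. The decomposition alone does not exclude $\bfv_j = \bfw + \bfw'$ with $\bfw,\bfw'$ sitting in subcones on opposite sides of the ray $\R_{\geq 0}\bfv_j$; ruling this out actually uses (1) and (2). Concretely, expanding $\bfw,\bfw'$ by generation yields $\bfv_j = \sum_{k\neq j}a_k\bfv_k$ with $a_k\in\N$ (after discarding any $a_j$-contribution via the half-plane hypothesis); the strict decrease of $\{\alpha_k\}$ from (2) forces $a_k=0$ for $k<j$, and then $0=\det(\bfv_j,\bfv_j)=\sum_{k>j}a_k\det(\bfv_j,\bfv_k)$, with each $\det(\bfv_j,\bfv_k)\leq -1$ by (1), forces $a_k=0$ for $k>j$ as well — a contradiction. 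This is exactly the argument the paper gives inside its proof of (3), and it cannot be replaced by an appeal to the subcone decomposition alone; your proposal is missing it.
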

\begin{remark}
Note that condition a) plus the fact that $\{\bfv_1,\ldots, \bfv_N\}$ is contained in a half-plane implies that it is sorted clockwise.
\end{remark}
\begin{proof}
The fact that $\det(\bfv_2,\bfv_1)>0$ implies that $\{\bfv_1,\,\bfv_2\}$ is a basis of the $\R-$vector space $\R^2.$ Write
$$\bfv_3=\lambda_1\,\bfv_1+\lambda_2\,\bfv_2,\,\,\lambda_1,\,\lambda_2\in\R.$$ We then have
$0>\det(\bfv_2,\,\bfv_3)=\lambda_1\det(\bfv_2,\,\bfv_1).$
As $\det(\bfv_2,\,\bfv_1)>0,$ we deduce that $\lambda_1<0.$
This implies that $\lambda_2>0$ as otherwise we would get $\langle \bfv_3, \bfv\rangle<0,$ a contradiction.

We compute then
$\det(\bfv_3,\,\bfv_1)=\lambda_2\det(\bfv_2,\bfv_1)>0,$
and inductively from here one can prove $\det(\bfv_j,\,\bfv_1)>0$ for $j>1.$ More generally, by starting with any index $i\geq1,$ one gets (1).
\par
To prove (2), note that we have -thanks to (1)- $\det(\bfv_N,\bfv_1)>0$, so $\{\bfv_1,\,\bfv_N\}$ is also a basis of $\R^2$. We will prove our claim by induction on $j=1,\ldots, N-1.$
\begin{itemize}
\item For $j=1,$ we have $\alpha_j=1,\,\beta_j=0,$ so the claim follows.
\item For a general $j<N,$ condition (b) implies that there exist $\alpha,\,\beta\in(0,1)$ such that
$$\bfv_{j+1}=\alpha\,\bfv_j+\beta\,\bfv_N.
$$
By using the inductive hypothesis, we have that
$$
\bfv_{j+1}=\alpha\big(\alpha_{j}\bfv_1+\beta_{j}\bfv_N\big)+\beta\,\bfv_N=\alpha\,\alpha_j\bfv_1+(\alpha\,\beta_j+\beta)\bfv_N,
$$
and we deduce that
$0<\alpha_{j+1}=\alpha\,\alpha_j<\alpha_j, \ \ 0<\beta_{j+1}=\alpha\,\beta_j+\beta,$ which completes with the proof of (2).
\par
For (3), denote with $\cM_{\bfv_1,\bfv_N}$ the monoid in $\Z^2$ generated by the integer points lying in the positive rays generated by $\bfv_1,\,\bfv_N.$ Condition (a) implies that every vector in $\cM_{\bfv_1,\bfv_N}$ lies in one cone generated by the positive rays of $\bfv_j,\,\bfv_{j+1}$, for some $j=1,\ldots, N-1.$ As 
$\det(\bfv_{j+1},\bfv_j)=1,$ this implies that any element in this smaller cone with integer coordinates is a linear combination of $\bfv_j,\,\bfv_{j+1}$ with integer (and nonegative) coefficients, by Lemma~\ref{easycase}. This proves that
the family $\{\bfv_1,\ldots, \bfv_N\}$ generates $\cM_{\bfv_1,\bfv_N}.$
\par To show that the generation is minimal, suppose then that for some $j\in\{1,\ldots, N\},$ we have
\begin{equation}\label{uu}
\bfv_j=\sum_{k\neq j}a_k\,\bfv_k,\, a_k\in\N.
\end{equation}
For $i=1,\ldots, N,$ we write $\bfv_i=\alpha_i\bfv_1+\beta_i\bfv_N$, and hence,
$$\bfv_j=\alpha_j\bfv_1+\beta_j\bfv_N=\Big(\sum_{k\neq j}a_k\,\alpha_k\Big)\bfv_1+\Big(\sum_{k\neq j}a_k\,\beta_k\Big)\bfv_N.
$$
As the sequence $\{\alpha_k\}_{k=1,\ldots, N}$ is positive strictly decreasing, we must have $a_k=0$ for $1\leq k<j.$ So,  \eqref{uu} turns into
$$\bfv_j=\sum_{k> j}a_k\,\bfv_k,
$$
which implies then that
$$0=\det(\bfv_j,\,\bfv_j)=\sum_{k>j}a_k\det(\bfv_j,\,\bfv_k)\geq\sum_{k>j}a_j,
$$
i.e. $a_j=0$ for all $j>k,$ which would imply that $\bfv_j$ is the zero vector, a contradiction. This shows that \eqref{uu} is not possible, and hence the family is a minimal set of generators.
\par To prove (4), suppose that $\{\bfv'_1,\ldots,\bfv'_M\}$ is another set of minimal generators of $\cM.$ We have then,
$$
\bfv_i =\sum_{j=1}^Ma_{ij}\,\bfv'_j=\sum_{j=1}^Ma_{ij}\left(\sum_{k=1}^Nb_{jk}\,\bfv_k\right)=\sum_{k=1}^N\left(\sum_{j=1}^Ma_{ij}b_{jk}\right)\bfv_k \ \ i=1,\ldots, N,
$$
with $a_{ij},\,b_{lk}\in\N.$ Note that for all $i=1,\ldots, N,$ we must have $\sum_{k=1}^Ma_{ij}b_{ji}\ge1,$ as otherwise $\{\bfv_1,\ldots, \bfv_N\}$ would not be minimal. We deduce then  that
\begin{equation}\label{prim}
{\bf0}=\left(\sum_{j=1}^Ma_{ij}b_{ji}-1\right)\bfv_i+\sum_{k=1,\,k\neq i}^N\left(\sum_{j=1}^Ma_{ij}b_{jk}\right)\bfv_k, \, i=1,\ldots, N.
\end{equation}
By performing the inner product of the two sides of the equality above with the vector $\bfv$, we deduce that
\begin{equation}\label{sec}
\sum_{j=1}^Ma_{ij}b_{jk}=\delta_{ik}=\left\{\begin{array}{ccc}
1&\,\mbox{if}& i=k\\
0&\,\mbox{if}& i\neq k,
\end{array}\right. \ \ i=1,\ldots, N.
\end{equation}
Set $A=\big(a_{ij}\big)_{1\leq i\leq N,\,1\leq j\leq M},\,B=\big(b_{jk}\big)_{1\leq j\leq M,\,1\leq k\leq N}.$ Identity \eqref{sec} imply that $A\cdot B$ is the identity matrix of $N\times N,$ which implies that $N\leq M.$ A symmetric argument shows that $M\leq N,$ and hence the equality $N=M$ holds. So, we have that $A$ and $B$ are square matrices being inverses of each other.
\par  From here the conclusion is straightforward, as it is easy to see that two square matrices having coefficients in $\N$ being inverse of each other must be permutations of the rows (or columns) of the identity matrix. We deduce then straightforwardly that there is a permutation of the indices of $\{\bfv'_1,\ldots,\bfv'_N\}$ which converts this sequence into $\{\bfv_1,\,\ldots,\,\bfv_N\}.$ This concludes with the proof of the proposition.
\end{itemize}
\end{proof}
\smallskip
The following algorithm would help us compute a Hilbert basis of a given cone. It will be also of use later in the sequel to obtain properties from such bases.
\begin{algorithm}[Computation of a Hilbert basis of a cone]\label{algo} $^{}$
\par \noindent{\bf Input:} Two linearly independent lattice vectors $(b,a),\,(b',a')\in\Z^2.$
\par\noindent{\bf Output:} The Hilbert Basis of the monoid generated by the integer points lying in the strongly rational cone generated by $(b,a)$ and $(b',a').$ 
\par\noindent{\bf Procedure:}

\begin{enumerate}
\item Assume $(b,a)$ and $(b',a')$ have coprime coordinates, otherwise, replace them with $\frac{1}{\gcd(a,b)}(b,a)$ and/or $\frac{1}{\gcd{(a',b')}}(b',a')$ respectively. Assume also w.l.o.g. that $\det\left(\begin{array}{ll} b'& a' \\ b & a \end{array}\right)>0.$
\item Set $\cS_0:=\cS_1:=\{(b',a'),(b,a)\}.$
\item If $\det\big(\cS_1\big)=1,$ return $\cS_0$ and terminate the algorithm. Else
\begin{enumerate}
\item Let $\cS_1=\{(b',a'),\,(b^*,a^*)\}.$Using the Euclidean Algorithm, compute $(u,v)\in\Z^2$  such that 
\begin{equation}\label{oso}\det\left(\begin{array}{ll} u& v \\ b^* & a^* \end{array}\right)=1.\end{equation}
\item Write $(b',a')=\alpha (b^*,a^*)+\beta (u,v),$ with $\alpha,\,\beta\in\Z.$
\item  $\cS_0\cup\{(u,v)+\lceil\frac{\alpha}{\beta}\rceil (b^*,a^*)\}\mapsto \cS_0$ and $\cS_1:=\{(b',a'),(u,v)+\lceil\frac{\alpha}{\beta}\rceil (b^*,a^*)\}.$ Go to step 3.
\end{enumerate}
 \end{enumerate}
 \end{algorithm}

 \begin{proposition}\label{kozu}
Algorithm \ref{algo} is correct.
 \end{proposition}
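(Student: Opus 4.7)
The plan is to verify that the sequence of vectors produced by the algorithm, sorted clockwise, satisfies the hypotheses of Proposition~\ref{criteria}, and then invoke that proposition to conclude that the output is the (unique) Hilbert basis. Thus I need to establish three things: that the algorithm terminates, that the final sequence satisfies the consecutive-determinant condition (a) of Proposition~\ref{criteria}, and that it satisfies the parallelogram condition (b). The open half-plane hypothesis is automatic since linear independence of the input vectors makes the cone strongly convex.

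The technical engine is a single rewriting. Substituting the relation $(b',a') = \alpha (b^*, a^*) + \beta (u,v)$ of step 3(b) into the definition $w := (u,v) + \lceil \alpha/\beta\rceil (b^*, a^*)$ yields
\[
w \;=\; \frac{1}{\beta}(b', a') + \epsilon\,(b^*, a^*), \qquad \epsilon := \left\lceil \frac{\alpha}{\beta}\right\rceil - \frac{\alpha}{\beta} \in [0, 1),
\]
with $\beta = \det(\cS_1)$, the quantity tested in step 3. From this I would extract two immediate consequences: (i) computing $\det((b', a'), w)$ shows that after the update $\det(\cS_1)$ becomes exactly $\epsilon \beta$, a nonnegative integer strictly smaller than the old $\beta$; (ii) the Euclidean step~\eqref{oso} guarantees $\det((u,v),(b^*, a^*)) = 1$, a determinant preserved under adding integer multiples of $(b^*, a^*)$, so $\det(w, (b^*, a^*)) = 1$ as well. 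The second observation gives consecutive determinant $1$ at each newly inserted adjacency, and the terminating condition $\det(\cS_1)=1$ supplies it for the final adjacency involving $(b',a')$, so condition (a) of Proposition~\ref{criteria} holds.

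Both termination and condition (b) then reduce to the strict inequality $\epsilon > 0$, which is the main obstacle. If $\epsilon$ vanished, then $w = (b',a')/\beta$ would be forced to be a lattice vector, making $\beta$ a divisor of $\gcd(b',a') = 1$ thanks precisely to the coprimality normalization in step 1; but a new iteration is entered only when $\beta > 1$, a contradiction. This is where the seemingly cosmetic normalization in step 1 becomes indispensable. Granted $\epsilon > 0$, the updated $\det(\cS_1) = \epsilon \beta$ is a positive integer strictly smaller than the previous one, so the sequence of values of $\det(\cS_1)$ strictly decreases in the positive integers and the algorithm halts with $\det(\cS_1) = 1$. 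For condition (b) the same identity realizes $w$ as $\epsilon\,(b^*, a^*) + (1/\beta)(b', a')$; since $1/\beta \in (0, 1)$ (because $\beta > 1$ at every insertion) and $\epsilon \in (0, 1)$, this places $w$ strictly in the interior of the parallelogram with vertices $\mathbf{0}, (b^*, a^*), (b', a'), (b^*, a^*) + (b', a')$. Proposition~\ref{criteria} then yields that $\cS_0$ is the unique Hilbert basis.
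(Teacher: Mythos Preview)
Your proof is correct and follows essentially the same route as the paper's: rewrite the newly produced vector as $\epsilon\,(b^*,a^*)+\frac{1}{\beta}(b',a')$ with $\epsilon=\lceil\alpha/\beta\rceil-\alpha/\beta$, use this to see that $\det(\cS_1)$ drops to $\epsilon\beta=\beta\lceil\alpha/\beta\rceil-\alpha$, invoke the coprimality of $(b',a')$ to force $\epsilon>0$, and then read off both termination and the parallelogram condition~(b) before applying Proposition~\ref{criteria}. Your presentation is somewhat tighter, but the ideas and their order coincide with the paper's argument.
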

 
 \begin{proof}
First, it is straightforward to check recursively that $\det(\cS_1)\neq0$ at every step of the algorithm, which implies that every $\beta\in\Z$ computed is not identically zero, so step (c) always has sense. Note also that every time we pass through step 3,  due to \eqref{oso} we have that $\{(u,v),\,(b^*,a^*)\}$ is a $\Z$-basis of $\Z^2$ with positive orientation.  At the beginning of each step, if the algorithm is not going to terminate there, we have that
 $$1<\det(\cS_1)=\det\left(\begin{array}{ll}b'& a' \\ b^* & a^* \end{array}\right)=\beta.$$
 
In the end, we will have $\bfv_N=(b',a'),$ and also at each step, if $\bfv_i=(b^*,a^*)$ then, following the notation of the algorithm,
$$\bfv_{i+1}=(u,v)+\left\lceil\frac{\alpha}{\beta}\right\rceil(b^*,a^*),
$$
with $(b',a')=\alpha(b^*,a^*)+\beta(u,v).$ This implies straightforwardly that
$$\bfv_{i+1}=\left(\big\lceil\frac{\alpha}{\beta}\big\rceil-\frac{\alpha}{\beta}\right)(b^*,a^*)+\frac{1}{\beta}(b',a')=\lambda\,\bfv_{i}+\lambda'\,\bfv_N,
$$
with $0<\lambda,\,\lambda'<1.$ 
 
 This shows that for each $i=1,\dots, N-2,\,\bfv_{i+1}$ lies in the interior of the parallelogram with vertices ${\bf0},\,\bfv_i,\,\bfv_N,\,\bfv_i+\bfv_{N},$ so we can apply Proposition \ref{criteria}.

 At the end of step 3, we now have that - after changing $\cS_1$ with the formula given in (c)- 
 $$\det\left(\begin{array}{ll}
b'&a'\\
u+\lceil\frac{\alpha}{\beta}\rceil b^*& v+\lceil\frac{\alpha}{\beta}\rceil a^*
 \end{array}\right)=\det\left(\begin{array}{ll}
\beta&\alpha\\
1 &\lceil\frac{\alpha}{\beta}\rceil 
 \end{array}
 \right)\cdot\,\det\left(\begin{array}{ll} u&v\\ b^*& a^*\end{array}\right)=\beta\lceil\frac{\alpha}{\beta}\rceil -\alpha<\beta.
 $$
Note that also we have that $\beta\lceil\frac{\alpha}{\beta}\rceil -\alpha$ cannot be equal to zero as otherwise this will imply that $(b',a')$ is $\beta$ has entries that are not relatively prime. So, we now have that the determinat of the ``new'' $\cS_1$ is strictly smaller than $\beta$ and larger than zero. This shows that the algorithm eventually terminates.
\par To conclude, we must show that the output is the Hilbert basis of the set of lattice points lying in the cone generated by $(b,a),\,(b',a').$ Write $\cS_0=\{\bfv_1,\ldots, \bfv_N\}$ sorted clockwise. Note that all these vectors lie in $\N^2.$  By construction, we have already $\det(\bfv_{i+1},\bfv_i)=1,\,i=1,\ldots, N-1.$  Now apply Proposition \ref{criteria}. This concludes the proof.
 \end{proof}
 
 \begin{corollary}\label{corola}
 Suppose  that the cone generated by $\bfv_1$ and $\bfv_N$ is strongly rational, and  
 that the clockwise-sorted family $\{\bfv_1,\ldots, \bfv_N\}\subset\Z^2$ is a Hilbert basis of this monoid.  Then, for $j=1,\ldots, N-2,\, \bfv_{j+1}$  lies in the interior of the parallelogram whose vertices are
 ${\bf0},\,\bfv_j,\,\bfv_N$ and $\bfv_j+\bfv_N.$
 \end{corollary}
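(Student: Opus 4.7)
The proposal is to reduce this corollary to what has already been established, by invoking the uniqueness statement of Proposition \ref{criteria} together with the explicit construction in Algorithm \ref{algo}.

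First, I would apply Algorithm \ref{algo} with input $\bfv_1$ and $\bfv_N$ (normalized so that their coordinates are coprime, which is automatic for elements of a Hilbert basis, and with the correct orientation since the family is sorted clockwise). By Proposition \ref{kozu}, this algorithm terminates and outputs a set $\cS_0 = \{\bfv'_1,\ldots,\bfv'_{N'}\}$, sorted clockwise with $\bfv'_1 = \bfv_1$ and $\bfv'_{N'} = \bfv_N$, which is a Hilbert basis of the monoid of integer points in the cone. Moreover, as shown in the proof of Proposition \ref{kozu}, the output automatically satisfies conditions (a) and (b) of Proposition \ref{criteria}: each intermediate $\bfv'_{i+1}$ was constructed precisely so as to lie in the interior of the parallelogram with vertices $\mathbf{0}$, $\bfv'_i$, $\bfv'_{N'}$, $\bfv'_i+\bfv'_{N'}$.

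Next, I would invoke part (4) of Proposition \ref{criteria}: a Hilbert basis of a strongly rational two-dimensional cone is unique up to permutation. Since both $\{\bfv_1,\ldots,\bfv_N\}$ and $\{\bfv'_1,\ldots,\bfv'_{N'}\}$ are Hilbert bases of the same monoid, we must have $N = N'$, and the two families agree as sets. Because both are sorted clockwise within a common open half-plane (the cone is strongly rational, so it lies in an open half-plane and the clockwise ordering is unique), the unique permutation identifying them is the identity; therefore $\bfv_j = \bfv'_j$ for all $j$.

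Finally, since the output of Algorithm \ref{algo} satisfies property (b) of Proposition \ref{criteria}, so does the given family $\{\bfv_1,\ldots,\bfv_N\}$, and this is exactly the statement of the corollary. I do not anticipate any real obstacle here: the only delicate point is confirming that the algorithm's output matches the given family \emph{as an ordered sequence}, but this follows directly from the uniqueness of the clockwise ordering of vectors lying in an open half-plane.
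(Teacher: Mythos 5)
Your proof is correct and takes essentially the same route as the paper: run Algorithm~\ref{algo} on $\bfv_1,\bfv_N$, note (from the proof of Proposition~\ref{kozu}) that its output satisfies the parallelogram condition, and then use the uniqueness of the Hilbert basis from Proposition~\ref{criteria}(4) to conclude the output coincides with the given family. The only difference is that you spell out more carefully why the two families agree as \emph{ordered} sequences (via the uniqueness of the clockwise ordering within the half-plane), a point the paper leaves implicit.
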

 
 \begin{proof}
Apply Algorithm \ref{algo} with input $\bfv_1,\,\bfv_N.$ Thanks to Proposition \ref{kozu}, its ouput gives a Hilbert basis of the aforementioned monoid, which satisfies that every vector in the family -when properly sorted clockwise- lies in the interior of the parallelogram determined by the immediately previous in the list and the last one. This implies the claim for this family. By Proposition \ref{background} (4), the Hilbert basis is unique,  so it must coincide with $\{\bfv_1,\ldots, \bfv_N\}\subset\Z^2.$ This concludes with the proof of the claim.
 \end{proof}
 The following result will be useful in the sequel.
\begin{lemma}\label{alfin}
Let $\{\bfv_1,\ldots, \bfv_N\}\subset\Z^2$ be the clockwise sorted Hilbert basis of the monoid generated by this set. For each $\bfv$ such that $\langle \bfv,\,\bfv_j\rangle>0,\,1\leq j\leq N,$ we have that 
$$\langle \bfv, \bfv_j\rangle \leq \max\{\langle \bfv,\,\bfv_1\rangle,\,\langle \bfv,\,\bfv_N\rangle\}.$$
\end{lemma}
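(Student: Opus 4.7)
My plan is to express each Hilbert basis vector in the basis of extreme rays $\{\bfv_1,\bfv_N\}$ and then to bound the sum of the coefficients. Concretely, Proposition~\ref{criteria}(2) lets me write $\bfv_j = \alpha_j \bfv_1 + \beta_j \bfv_N$ with $\alpha_j, \beta_j \geq 0$ for every $j = 1, \ldots, N$. Since both $\langle \bfv, \bfv_1\rangle$ and $\langle \bfv, \bfv_N\rangle$ are positive by hypothesis, plugging this decomposition into the inner product gives
$$\langle \bfv, \bfv_j \rangle \;=\; \alpha_j \langle \bfv, \bfv_1 \rangle + \beta_j \langle \bfv, \bfv_N \rangle \;\leq\; (\alpha_j + \beta_j)\, \max\bigl\{\langle \bfv, \bfv_1\rangle, \langle \bfv, \bfv_N \rangle\bigr\},$$
so the whole lemma is reduced to the key claim
$$(\star)\qquad \alpha_j + \beta_j \leq 1 \quad \text{for all } j = 1, \ldots, N.$$

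I would prove $(\star)$ by induction on $j$. The cases $j = 1$ and $j = N$ are immediate since $(\alpha_j, \beta_j)$ equals $(1, 0)$ and $(0, 1)$ respectively. For the inductive step from $j$ to $j+1$ with $1 \leq j \leq N - 2$, Corollary~\ref{corola} provides scalars $a_j, b_j \in (0,1)$ with $\bfv_{j+1} = a_j \bfv_j + b_j \bfv_N$. Combining this with the two determinant identities $\det(\bfv_{j+1},\bfv_j)=1$ (from Proposition~\ref{criteria}(a)) and $\det(\bfv_N,\bfv_{j+1})=a_j\det(\bfv_N,\bfv_j)$ gives $b_j = 1/D_j$ and $a_j = D_{j+1}/D_j$, where $D_k := \det(\bfv_N, \bfv_k)$. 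By Proposition~\ref{criteria}(1) each $D_k$ is a positive integer, so the strict inequality $a_j < 1$ upgrades to $D_{j+1} \leq D_j - 1$, whence $a_j + b_j = (D_{j+1}+1)/D_j \leq 1$. Feeding the inductive hypothesis into $\bfv_{j+1} = a_j \bfv_j + b_j \bfv_N$ yields
$$\alpha_{j+1} + \beta_{j+1} = a_j(\alpha_j + \beta_j) + b_j \leq a_j + b_j \leq 1,$$
closing the induction.

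The main obstacle is precisely the claim $(\star)$: the nonnegativity of $\alpha_j, \beta_j$ is a soft convexity fact, but bounding $\alpha_j + \beta_j$ from above is a genuinely lattice-theoretic statement, and the mechanism that converts $a_j<1$ into $a_j+b_j\leq 1$ is the integrality of the $D_k$'s. This is where the Hilbert basis structure (via Proposition~\ref{criteria}(1) and Corollary~\ref{corola}) becomes essential; once $(\star)$ is available, the final step is just a one-line convex-combination estimate as displayed above.
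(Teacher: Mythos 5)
Your proof is correct, and the core idea is the same as the paper's: write $\bfv_j=\alpha_j\bfv_1+\beta_j\bfv_N$ and show $\alpha_j+\beta_j\leq 1$, exploiting integrality of the relevant determinants to upgrade a strict inequality into a nontrivial bound. Your version is, however, more complete than the paper's. The paper's proof says ``suppose w.l.o.g.\ that $j=2$'' and then carries out the estimate only for $\bfv_2$; it does not explain why the $j=2$ case suffices (the determinant identity $\det(\bfv_2,\bfv_1)=1$ that drives the estimate is special to $j=2$, so this is not a literal w.l.o.g.). Your induction on $j$, passing from the parallelogram bound of Corollary~\ref{corola} at each step, is exactly the missing scaffolding that makes that reduction precise, and the telescoping computation $\alpha_{j+1}+\beta_{j+1}=a_j(\alpha_j+\beta_j)+b_j\leq a_j+b_j\leq 1$ closes it cleanly. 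A second, minor difference: the paper invokes Pick's theorem to establish $D\geq 2$ before applying Cramer's rule, whereas you never need Pick; the strict bound $a_j<1$ from Corollary~\ref{corola} together with positivity and integrality of $D_k=\det(\bfv_N,\bfv_k)$ (Proposition~\ref{criteria}(1)) already forces $D_{j+1}\leq D_j-1$, which is all you use. So your route is both a faithful rendering of the paper's idea and a tightening of its exposition.
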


\begin{proof}
Suppose w.l.o.g. that $j=2, $ and write 
$\bfv_2=\lambda\bfv_1+\mu\bfv_N,$ with $0<\lambda,\,\mu<1,$ the strict inequalities are due to the fact that $\bfv_2$ lies in the interior of the parallelogram determined by $\bfv_1$ and $\bfv_N.$  If we show that $\lambda+\mu\leq1,$ then the claim will follow as 
$$\langle \bfv, \bfv_2\rangle=\lambda\langle\bfv, \bfv_1\rangle+\mu\langle\bfv, \bfv_N\rangle \leq  (\lambda+\mu)\max\{\langle \bfv,\,\bfv_1\rangle,\,\langle \bfv,\,\bfv_N\rangle\}$$
To do this, denote with $D$ the determinant of $\{\bfv_N,\,\bfv_1\}.$ According to the Pick's Theorem applied to the parallelogram with vertices $0, \bfv_1, \bfv_N, \bfv_1+\bfv_N$, we get that $D= I +B/2 -1$, where $I$ is the number of interior points, while $B$ is the number of boundary points, and here $B=4$. Note that $D\geq2$ as otherwise there would no be $\bfv_2$ as an interior point. From 
$1=\det(\bfv_2,\bfv_1)=\mu\det(\bfv_N,\bfv_1)=\mu\,D,$ we deduce that $\mu=\frac{1}{D}.$ On the other hand, by solving the linear system $\bfv_2=\lambda\bfv_1+\mu\bfv_N$ using Cramer's rule, we deduce that $\lambda\leq\frac{D-1}{D}$ and hence
$$\lambda+\mu\leq\frac{D-1}{D}+\frac{1}{D}=1,$$
which proves the claim.
\end{proof}

We conclude  with an example of how the algorithm works.

\begin{example}
\label{hb23}
Consider the fan from Example~\ref{ff23}. The positive quadrant is split in two strongly rational cones delimited by the line passing through the origin and $(2,3)$. We will compute Hilbert bases for the monoids of integer points from each of these cones. First consider the cone generated by $\{(0,1), (2,3)\}$. Note that 
$\det \left(
\begin{array}{cc} 
2 & 3 \\
0& 1
\end{array}
\right)
= 2>0$.
Start with the set $S_1=\{(2, 3), (0,1)\}$. We need to find $u, v$ integers such that $ u \cdot 1 - v \cdot 0 =1$, and any $(1, a)$ with $a$ integer satisfies this condition. Write $(2,3)= \alpha (0,1) +\beta (1,a)$ and note that $\lceil \frac{\alpha}{\beta} \rceil = 2-a$, which produces the vector $(1,a) + (2-a)(0,1) = (1,2)$. The new $S_1 = \{ (2,3), (1,2)\}$ and the algorithm ends, since $\det \left(
\begin{array}{cc} 
2 & 3 \\
1& 2
\end{array}
\right)
= 1.$
So this monoid has the Hilbert basis $\{(2,3), (1,2), (0,1)\}$.

Repeating the algorithm for the monoid of integer points in the cone generated by $(1,0), (2,3)$, we get the Hilbert basis  $\{(1,0), (1,1), (2,3)\}.$

\end{example}

\smallskip

\subsection{Gr\"obner bases}
Set $\bfY= Y_1,\ldots, Y_s$ for short. The theory of Gr\"obner bases on a ring  $\rmR[\bfY]$  of polynomials over a Noetherian ring  $\rmR$ and submodules of $\rmR[\bfY]^m$ is a bit more subtle than the one developed for fields, but it is well-known and their properties well stablished. We refer the reader to the presentation given in \cite[Chapter 4]{AL94}, and also for the definitons  and properties of \textit{leading monomial ($\mbox{\rm lm}$),\,leading term ($\mbox{\rm lt}$), leading coefficient ($\mbox{\rm lc}$), least common multiple ($\mbox{\rm lcm}$), reduction modulo a family of polynomials, Division Algorithm, syzygies, Gr\"obner bases, minimal bases} and \textit{S-polynomials}.
\par
 We will assume all along this subsection that the monomial order $\prec$ has been fixed. 

\begin{remark}\label{usare}
For the algorithms described in this text,  when we apply the Division Algorithm to reduce an element of $R[\bfY]$ modulo a family of ordered $\{g_1,\ldots, g_t\},$ in every instance where the division is possible we will choose  the \textit{maximal} index among those available.
\end{remark}

The following result will be useful in the sequel.
\begin{lemma}\label{tecc}
Let $p\in \rmR$ be not a unit in this ring, $g,\,h\in\rmR[\bfY]$ and $G=\{g_1,\ldots, g_t\}\subset\rmR[\bfY]$ with $\mbox{\rm lc}(g_j)=1$ for all $j,$  such that $g$ reduces to $r$ modulo $G$, with no further term of $r$ being able to be reduced modulo $G$. Denote with $\overline{g},\,\overline{G},\,\overline{r}$ the classes of $g,\,G,$ and $r$ modulo $p.$ Then
$\overline{g}$ reduces to $\overline{r}$ modulo  $\overline{G}$
in $\rmR/\langle p\rangle,$ and no term of $\overline{r}$ can be reduced further modulo  $\overline{G}.$
\end{lemma}
\begin{proof}
As the leading coefficients of the $g_i$'s are $1$, then it is clear that no term of $\overline{r}$ can be further reduced modulo $\overline{G}$ as otherwise this reduction could have been done in $\rmR$.
\par To prove the other part of the claim, note that if we look at each step of the division algorithm, we either have $\mbox{\rm lc}(g)\in\langle p\rangle$ or not. In the first case, modulo $p$ this leading coefficient vanishes, but then we will have that $c_{i_0}$ in the algorithm will be an element of $\langle p\rangle$ and hence equal to zero in $\rmR/\langle p\rangle,$ so modulo $p$ we would not have seen any reduction at this step, as the leading coefficient vanishes. In the other case, we have $\overline{c_{i_0}}=\overline{\mbox{\rm lc}(f)}\neq0,$ and hence the reduction happens also modulo $p$. Applying this reasoning recursively, we get the claim.
\end{proof}

Suppose  that $G=\{\bg_1,\ldots,\bg_t\}$ is a Gr\"obner basis of a submodule of $\rmR[\bfY]^m,$ with $\mbox{\rm lc}(\bg_j)=1,\,j=1,\ldots, t.$  Let $\{\tilde{e}_1,\ldots,\tilde{e}_t\}$ be the canonical basis of $\rmR[\bfY]^t,$ and write the S-polynomial
$
S(\bg_i, \bg_j)=\sum_{\nu=1}^tF_{ij\nu}(\bfY)\bg_\nu,
$
with $F_{ij\nu}(\bfY)\in \rmR[\bfY],$ such that
$$
\max_{1\leq\nu\leq t}\{\mbox{\rm lm}\big(F_{ij\nu}(\bfY)\mbox{\rm lm}(\bg_\nu)\big)\}=\mbox{\rm lm}\left(S(\bg_i, \bg_j)\right).
$$
Set $\bX_{i,j}:=\mbox{\rm lcm}\big(\mbox{\rm lm}(\bg_i), \mbox{\rm lm}(\bg_j)\big)$ and define
\begin{equation}\label{bs}
\bfs_{i,j}:=\frac{\bX_{i,j}}{\mbox{\rm lm}(\bg_i)}\,\tilde{e}_i-
\frac{\bX_{i,j}}{\mbox{\rm lm}(\bg_j)}\,\tilde{e}_j-\sum_{\nu=1}^tF_{ij\nu}(\Y)\tilde{e_\nu}\in \rmR[\bfY]^t.
\end{equation}
Let $\cG=\{\bg_1,\ldots,\bg_t\}$ be a sequence  non-zero vectors in $\rmR[\bfY]^m.$  We define an order $\prec_\cG$ on the monomials of $\rmR[\bfY]^t$ as follows
\begin{equation}\label{prorder}
\bfY^\bfalpha\tilde{e}_i\prec_\cF\bfY^\bfbeta\tilde{e}_j\iff
\left\{\begin{array}{llr}
\mbox{\rm lm}(\bfY^\bfalpha\bg_i)\prec\mbox{\rm lm}(\bfY^\bfbeta\bg_j)&\,\mbox{or}&\\
\mbox{\rm lm}(\bfY^\bfalpha\bg_i)=\mbox{\rm lm}(\bfY^\bfbeta\bg_j)&\,\mbox{and} &\,j<i.
\end{array}
\right.
\end{equation}
We call $\prec_\cG$ the {\em order induced} by $\cG$. We have the following theorem which is essentially due to Schreyer, see~\cite{E96}.

\begin{theorem}\label{3713}
If $\cG=\{\bg_1,\ldots,\,\bg_t\}$ is a Gr\"obner basis of a submodule of $\rmR[\bfY]^m,$ with $\mbox{\rm lc}(\bg_j)=1$ for all $j=1,\ldots, t,$ then $\{\bfs_{i,j}\}_{1\leq i<j\leq t}$ is a Gr\"obner basis of the syzygy module $\mbox{\rm syz}(\bg_1,\ldots,\bg_t)\subset \rmR[\bfY]^t$ with respect to $\prec_\cG.$ Moreover,
if $\bfs_{i,j}\neq{\bf0},$ then
\begin{equation}\label{liding}
\mbox{\rm lt}(\bfs_{i,j})=\frac{\bX_{i,j}}{\mbox{\rm lt}(\bg_i)}\,\tilde{e}_i, \quad 1\leq i<j\leq t.
\end{equation}
\end{theorem}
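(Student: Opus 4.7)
The plan is to follow the classical Schreyer argument, being careful about the Noetherian ring subtleties that do not arise over a field. I would split the proof in two parts: first verify the leading-term formula \eqref{liding}, then derive the Gr\"obner basis property from it. For the leading-term formula, I would inspect \eqref{bs} and compare each of its monomials under $\prec_\cG$. The two ``natural'' top candidates are $\frac{\bX_{i,j}}{\mbox{\rm lm}(\bg_i)}\tilde{e}_i$ and $\frac{\bX_{i,j}}{\mbox{\rm lm}(\bg_j)}\tilde{e}_j$, which both yield $\bX_{i,j}$ when paired with the corresponding $\bg_k$; meanwhile every monomial appearing in $F_{ij\nu}(\bfY)\tilde{e}_\nu$ produces $\mbox{\rm lm}\bigl(F_{ij\nu}\bg_\nu\bigr)\preceq\mbox{\rm lm}(S(\bg_i,\bg_j))$ by \eqref{gcond}, strictly below $\bX_{i,j}$ since the top terms of $\bg_i,\bg_j$ cancel in forming $S(\bg_i,\bg_j)$. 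The two top candidates tie at level $\bX_{i,j}$, and the index tie-breaker in \eqref{prorder} selects $\tilde{e}_i$ because $i<j$, giving \eqref{liding}; the hypothesis $\mbox{\rm lc}(\bg_i)=1$ ensures that the division by $\mbox{\rm lt}(\bg_i)$ makes sense and produces a monomial with unit leading coefficient.

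For the Gr\"obner basis property, since the inclusion $\langle\mbox{\rm lt}(\bfs_{i,j})\rangle\subseteq\mbox{\rm Lt}(\mbox{\rm syz}(\cG))$ is immediate, I would establish the reverse. Pick any nonzero $\bfh=(h_1,\ldots,h_t)\in\mbox{\rm syz}(\cG)$ with $\mbox{\rm lt}(\bfh)=\mbox{\rm lt}(h_{i_0})\tilde{e}_{i_0}$ under $\prec_\cG$, set $\mu=\mbox{\rm lm}(h_{i_0})\mbox{\rm lm}(\bg_{i_0})$ and $I_\mu=\{i:\mbox{\rm lm}(h_i)\mbox{\rm lm}(\bg_i)=\mu\}$; by the tie-breaking rule $i_0=\min I_\mu$. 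Using multiplicativity of $\prec$, the coefficient of the vector monomial $\mu$ in $\sum_i h_i\bg_i$ can only come from products of leading terms with $i\in I_\mu$, so the equality $\sum_ih_i\bg_i={\bf 0}$ and the normalization $\mbox{\rm lc}(\bg_i)=1$ force $\sum_{i\in I_\mu}\mbox{\rm lc}(h_i)=0$ in $\rmR$. Since $\mbox{\rm lc}(h_{i_0})\neq 0$, this cancellation forces $|I_\mu|\geq 2$, so I can select some $i_1\in I_\mu\setminus\{i_0\}$. Then $\bX_{i_0,i_1}=\mbox{\rm lcm}(\mbox{\rm lm}(\bg_{i_0}),\mbox{\rm lm}(\bg_{i_1}))$ divides $\mu$, so $\mbox{\rm lm}(h_{i_0})$ is divisible by $\bX_{i_0,i_1}/\mbox{\rm lm}(\bg_{i_0})$, and a direct check shows $\mbox{\rm lt}(h_{i_0})\tilde{e}_{i_0}$ is an $\rmR[\bfY]$-multiple of $\mbox{\rm lt}(\bfs_{i_0,i_1})=\frac{\bX_{i_0,i_1}}{\mbox{\rm lt}(\bg_{i_0})}\tilde{e}_{i_0}$, concluding the argument.

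The main subtlety compared to the field case is the cancellation step $\sum_{i\in I_\mu}\mbox{\rm lc}(h_i)=0$. Over a field this is a trivial linear identity, but over a Noetherian ring one must guarantee that a second index $i_1$ exists, and this can fail if the leading coefficients of the $\bg_j$ are allowed to combine with those of the $h_i$ to produce cancellations that a single pair $\bfs_{i,j}$ cannot capture. The normalization $\mbox{\rm lc}(\bg_j)=1$ imposed in the statement is exactly what isolates the cancellation to the $\mbox{\rm lc}(h_i)$ alone, forcing $|I_\mu|\geq 2$; once this is secured, the Schreyer reduction proceeds verbatim as in the field case. The rest of the proof is routine bookkeeping, well-suited to invoking \cite[Theorem 3.5.19]{AL94} in the spirit already used by the authors for related statements.
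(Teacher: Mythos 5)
Your argument is correct and follows the same Schreyer route that the paper invokes by citing \cite[Lemma 3.7.9 \& Theorem 3.7.13]{AL94} and asserting that the field-case proof carries over once $\mbox{\rm lc}(\bg_j)=1$ for all $j$; you are simply writing out the details the paper elides. In particular you correctly locate the only place the normalization matters: it reduces the top-degree cancellation in $\sum_i h_i\bg_i$ to $\sum_{i\in I_\mu}\mbox{\rm lc}(h_i)=0$, which forces a second index $i_1\in I_\mu$ so that the pair $\bfs_{i_0,i_1}$ accounts for $\mbox{\rm lt}(\bfh)$.
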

\begin{proof}
The proofs of \cite[Lemma 3.7.9 \& Theorem 3.7.13]{AL94} for the case when $\rmR$ is a field hold straightforwardly to this case.
\end{proof}

\bigskip
\section{$\mbox{ker}(\varphi_0)$}\label{m=2}
In this section, we will compute generators of the kernel of $\varphi_0.$ From now on, unless we give more precisions about the ring $\rmR$, a \textit{minimal set of generators} of an $\rmR$-module will be a set of generators of this module with the property that if we remove one of the elements of the set, it does not generate the same module anymore. Recall from \eqref{map} that we have a presentation
$$\begin{array}{ccc}
\rmR[\bfY]&\stackrel{\varphi_0}{\to}& \cB_{\bff,\cF, \bfp}\\
Y_\bfv&\mapsto&\bfp^{\bff(\bfv)}\bfx^{\bfv},
\end{array}
$$
where $\varphi_0$ is a $\Z^2$-graded map if we declare $\deg(\bfY_\bfv)=\bfv,$ and $\deg(r)=(0,0)$ for all $r\in\rmR.$
Let $\prec$ be the lexicographic monomial order in $\rmR[\bfY]$ which satisfies
$Y_\bfv\prec Y_{\bfv'}$ if and only if $\bfv$ is ``to the left'' of $\bfv'.$  To be more precise, if we sort $\cH = \{\bfv_1, \ldots, \bfv_M \}$ clockwise, then our monomial order is lexicographic order in $\rmR[Y_{\bfv_1},\ldots, Y_{\bfv_M}]$ with $Y_{\bfv_i}\preceq Y_{\bfv_j}$ if and only if $i\leq j.$ If we use this notation, we will denote with $Y_j$ the variable $Y_{\bfv_j}$ to simplify notation.

 We will say that  two distinct vectors $\bfv$ and $\bfv'\in\cH$ are {\em adjacent} if in the open strongly rational cone generated by these two vectors there are no other elements of $\cH.$  Note that being adjacent implies that all the lattice vectors lying  in the cone generated by  $\bfv$ and $\bfv'$ can be expressed as a nonegative integer combination of these two vectors. Moreover, two adjacent vectors lie simultaneously in one and only one of the cones $C_i$, which immediately implies that 
\begin{equation}\label{linear}
f_u(\alpha \bfv+\beta\bfv')=\alpha f_u(\bfv)+\beta f_{u}(\bfv') \ \forall \alpha,\,\beta\in\R_{\geq0},\,u=1,\ldots, n.
\end{equation}

\begin{proposition}\label{curry}
Suppose that $|\cF|$ is a proper rational cone contained in $\R^2.$ If $\bfv$ and $\bfv'$ are distinct, non-adjacent elements of $\cH,$ then there exist  adjacent $\bfw,\,\bfw'\in\cH,\,\alpha,\,\alpha'\in\N$ such that
\begin{equation}\label{S}
\bfS_{\bfv,\bfv'}:=Y_\bfv Y_{\bfv'}-Y_\bfw^\alpha Y_{ \bfw'}^{\alpha'}\bfp^{\bfgamma_{\bfv\bfv'}} \in\ker(\varphi_0),
\end{equation}
with
\begin{equation}\label{nueve}
\bfgamma_{\bfv\bfv'}=\big(f_1(\bfv)+f_1(\bfv')-f_1(\bfv+\bfv'),\ldots,\,f_n(\bfv)+f_n(\bfv')-f_n(\bfv+\bfv')\big).
\end{equation}
The expression defining  $\bfS_{\bfv,\bfv'}$ in \eqref{linear} is unique. Its leading term  with respect to $\prec$ is $Y_{\bfv}Y_{\bfv'}.$
\end{proposition}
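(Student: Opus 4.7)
The plan is to locate adjacent $\bfw,\bfw'\in\cH$ whose cone contains $\bfv+\bfv'$, expand $\bfv+\bfv'=\alpha\bfw+\alpha'\bfw'$ with $\alpha,\alpha'\in\N$, and then verify the claimed identity by direct computation. Since $|\cF|$ is convex, $\bfv+\bfv'\in|\cF|$ lies in some maximal cone $C_i$; using the clockwise-sorted Hilbert basis $\cH_{Q_i}$, I pick the consecutive pair $\bfw,\bfw'\in\cH_{Q_i}$ whose closed subcone contains the ray of $\bfv+\bfv'$. By the construction of Hilbert bases in the plane (Algorithm~\ref{algo} and Proposition~\ref{kozu}), consecutive elements satisfy $\det(\bfw',\bfw)=1$, so Lemma~\ref{easycase} yields unique $\alpha,\alpha'\in\N$ with $\bfv+\bfv'=\alpha\bfw+\alpha'\bfw'$. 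The pair $(\bfw,\bfw')$ is adjacent in all of $\cH$, because any $\cH$-vector lying angularly between them would sit inside $C_i$, hence in $\cH_{Q_i}$, contradicting consecutivity there.

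To check $\bfS_{\bfv,\bfv'}\in\ker(\varphi_0)$, subadditivity of each $f_u$ ensures $\bfgamma_{\bfv\bfv'}\in\N^n$, so $\bfp^{\bfgamma_{\bfv\bfv'}}\in\rmR$. Applying $\varphi_0$, the $\bfx$-exponents of the two terms agree because $\alpha\bfw+\alpha'\bfw'=\bfv+\bfv'$. For the $\bfp$-exponents, $\bfw,\bfw'$ lie in the common cone $C_i$ where each $f_u$ is linear (see \eqref{linear}), so $\alpha f_u(\bfw)+\alpha' f_u(\bfw')=f_u(\bfv+\bfv')$; adding the $u$-th component of $\bfgamma_{\bfv\bfv'}$ yields $f_u(\bfv)+f_u(\bfv')$. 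Uniqueness of the right-hand expression follows immediately: $\bfgamma_{\bfv\bfv'}$ is prescribed by the closed formula \eqref{nueve}, and the monomial $Y_\bfw^\alpha Y_{\bfw'}^{\alpha'}$ is determined by $\bfv+\bfv'$, uniquely via Lemma~\ref{easycase} when the sum lies in the open interior of a subcone, and equal to $Y_{\bfw^\ast}^k$ when $\bfv+\bfv'=k\bfw^\ast$ lies on a single ray of $\cH$, regardless of the auxiliary adjacent choice since then $\alpha'=0$.

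The delicate step, and the main obstacle I expect, is the leading-term claim, which reduces to showing $Y_{\bfv'}$ does not appear in $Y_\bfw^\alpha Y_{\bfw'}^{\alpha'}$. Suppose for contradiction that $\bfw'=\bfv'$ with $\alpha'\geq 1$. Rearranging $\alpha\bfw+\alpha'\bfv'=\bfv+\bfv'$ gives $\bfv=\alpha\bfw+(\alpha'-1)\bfv'$. If $\alpha'=1$, then $\bfv=\alpha\bfw$, forcing $\bfv=\bfw$ by primitivity of Hilbert basis elements, which makes $\bfv,\bfv'$ adjacent and contradicts the standing assumption. If $\alpha'\geq 2$, the expression places $\bfv$ in the positive cone of $\{\bfw,\bfv'\}$; but $\bfv$ is strictly angularly counterclockwise of $\bfw$ (since $\bfw$ itself lies counterclockwise of $\bfv'$ and strictly clockwise of $\bfv$), so this is impossible. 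Since $\bfw\prec\bfw'$, this also rules out $\bfw=\bfv'$. Hence the highest variable appearing in $Y_\bfw^\alpha Y_{\bfw'}^{\alpha'}$ is strictly smaller than $Y_{\bfv'}$ in our lex order, and therefore $\mbox{\rm lm}(\bfS_{\bfv,\bfv'})=Y_\bfv Y_{\bfv'}$.
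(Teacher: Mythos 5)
Your proposal is correct and follows essentially the same approach as the paper: locate the adjacent pair $\bfw,\bfw'$ whose subcone contains $\bfv+\bfv'$, expand $\bfv+\bfv'=\alpha\bfw+\alpha'\bfw'$ using that consecutive Hilbert basis vectors have determinant $1$, and verify membership in the kernel by linearity of the $f_u$'s on that subcone. The only substantive difference is that you spell out the leading-term claim (ruling out $\bfw'=\bfv'$ with $\alpha'\geq 1$ by a short case split), whereas the paper merely asserts the angular bracketing \eqref{desig} "due to the way we labeled the vectors in the cone"; your more careful handling of the degenerate case where $\bfv+\bfv'$ lies on a single ray is a welcome clarification rather than a departure.
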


Note that the degree of $\bfS_{\bfv,\bfv'}$ with the $\Z^2$-graduation, is equal to $\bfv+\bfv'.$
\begin{proof}
The vector $\bfv+\bfv'$ must belong to one of the strongly rational cones generated by two consecutive elements of $\cH$. Let  $\bfw$ and $\bfw'$ be  two adjacent vectors whose cone contains the former sum. Then, there exist  $\alpha,\,\alpha'\in\N$ with
\begin{equation}\label{key}
\bfv+\bfv'=\alpha\bfw+\alpha'\bfw'.
\end{equation}
Note that we must have  either 
\begin{equation}\label{desig}
\bfv\prec \bfw,\,{\bfw'}\prec {\bfv'} \ \mbox{or} \ {\bfv'}\prec \bfw,\,{\bfw'}\prec {\bfv},
\end{equation} 
due to the way we labeled the vectors in the cone, and moreover the expression \eqref{key} is unique.
By the definition of $\varphi_0$ given in  \eqref{map}, and using \eqref{key}, \eqref{linear} and \eqref{nueve},  it is easy to see that $\bfS_{\bfv,\bfv'}\in\ker(\varphi_0).$ 
The fact that its leading term is $Y_\bfv Y_{\bfv'}$ follows straightforwardly from \eqref{desig}.
 \end{proof}

\begin{definition}
\label{syzygy}
$\bfS_{\bff,\cF,\bfp}:=\{\bfS_{\bfv,\,\bfv'}|\,  \bfv,\, \bfv'\in\cH, \,\bfv\prec\bfv',\, \mbox{non-adjacents}\}.$
\end{definition}
Recall that the cardinality of $\cH$ was denoted by $M$ in the introduction. Therefore, $\#\bfS_{\bff,\cF,\bfp}=\frac{(M-1)(M-2)}{2}={M-1\choose 2}.$
 \begin{lemma}\label{txt}
With the lexicographic monomial order $\prec$ defined above, we have that
\begin{enumerate}
\item $\mbox{\rm Lt}\big(\bfS_{\bff,\cF,\bfp}\big)=\langle Y_\bfv Y_{\bfv'} :\ \bfv,\,\bfv'\in\cH, \,\,\bfv\prec\bfv,'\,  \mbox{non-adjacents}\rangle.
$
\item  $\bfY^\bfc\notin\mbox{\rm Lt}\big(\bfS_{\bff,\cF, \bfp}\big)$ if and only if there exist adjacent vectors $\bfw,\,\bfw'\in\cH$ and $A,\,A'\in\N$ such that
$\bfY^\bfc=Y_\bfw^A\,Y_{\bfw'}^{A'}.$
\end{enumerate}
\end{lemma}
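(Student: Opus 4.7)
The plan is to handle both parts in sequence, leveraging Proposition \ref{curry} for part (1) and a support-based analysis for part (2).

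For part (1), I would observe that by definition $\bfS_{\bff,\cF,\bfp} = \{\bfS_{\bfv,\bfv'} : \bfv, \bfv' \in \cH,\, \bfv \prec \bfv',\, \text{non-adjacent}\}$, and Proposition \ref{curry} states that $\mbox{\rm lm}(\bfS_{\bfv,\bfv'}) = Y_\bfv Y_{\bfv'}$. Since the expression \eqref{S} shows that $Y_\bfv Y_{\bfv'}$ appears in $\bfS_{\bfv,\bfv'}$ with coefficient $1$, we in fact have $\mbox{\rm lt}(\bfS_{\bfv,\bfv'}) = Y_\bfv Y_{\bfv'}$. As $\mbox{\rm Lt}(\bfS_{\bff,\cF,\bfp})$ is by definition the ideal generated by the leading terms of elements of $\bfS_{\bff,\cF,\bfp}$, the claim follows immediately.

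For part (2), since $\mbox{\rm Lt}(\bfS_{\bff,\cF,\bfp})$ is a monomial ideal, a monomial $\bfY^\bfc$ lies in it if and only if it is divisible by some $Y_\bfv Y_{\bfv'}$ with $\bfv, \bfv'$ non-adjacent. I would translate this divisibility condition into a statement on the support of $\bfc$. For the backward direction, suppose $\bfY^\bfc = Y_\bfw^A Y_{\bfw'}^{A'}$ with $\bfw, \bfw'$ adjacent. Any divisor of $\bfY^\bfc$ that is a product of two distinct variables must use exactly $Y_\bfw$ and $Y_{\bfw'}$; but $\{\bfw, \bfw'\}$ is adjacent by hypothesis, so $\bfY^\bfc$ is not divisible by any $Y_\bfv Y_{\bfv'}$ with $\bfv, \bfv'$ non-adjacent, placing $\bfY^\bfc$ outside $\mbox{\rm Lt}(\bfS_{\bff,\cF,\bfp})$.

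For the forward direction, the key geometric observation is that the elements of $\cH$ lie on pairwise distinct rays: within each maximal cone $Q_i$, the Hilbert basis $\cH_{Q_i}$ consists of vectors on distinct rays (from Proposition \ref{criteria}, the coefficients $\alpha_j$ are strictly decreasing), and vectors on the shared boundary ray of two adjacent maximal cones are identified in the union. Consequently, if $\bfv_1 \prec \bfv_2 \prec \bfv_3$ are three distinct elements of $\cH$ in the support of $\bfc$, then $\bfv_2$ lies strictly between $\bfv_1$ and $\bfv_3$ in angular order, so it lies in the open cone generated by $\bfv_1, \bfv_3$, forcing $\bfv_1, \bfv_3$ to be non-adjacent; then $Y_{\bfv_1}Y_{\bfv_3}$ divides $\bfY^\bfc$, a contradiction. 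Therefore, the support of $\bfc$ has at most two elements. If it has exactly two, they must be adjacent (else their product of variables is a non-adjacent divisor). The remaining cases of support of size $\leq 1$ are handled by picking any adjacent pair in $\cH$ (which exists since $M \geq 2$ and consecutive elements in the clockwise ordering are adjacent) and using a zero exponent. The only subtle point is justifying the distinctness of rays for the elements of $\cH$; apart from that, the argument is essentially combinatorial.
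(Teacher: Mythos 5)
Your proof is correct and, for both parts, follows essentially the same route as the paper: part (1) is a direct consequence of Proposition~\ref{curry} and the definition of $\bfS_{\bff,\cF,\bfp}$; part (2) reduces divisibility in the monomial ideal $\mbox{\rm Lt}(\bfS_{\bff,\cF,\bfp})$ to a combinatorial statement about the support of $\bfc$, and the engine in both your proof and the paper's is that, in the clockwise (angular) order on $\cH$, any vector strictly between two others witnesses their non-adjacency. Where the paper anchors the argument on a chosen $\bfw_0$ in the support and then observes that $\bfw_0$ has at most two adjacent neighbors which are themselves mutually non-adjacent, you obtain the bound $|\mathrm{supp}(\bfc)|\le 2$ directly by contradiction from any three distinct support elements; these are two presentations of the same underlying observation, not genuinely different arguments.

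One small inaccuracy worth noting: the reason elements of a Hilbert basis $\cH_{Q_i}$ lie on pairwise distinct rays is minimality of the Hilbert basis (on each ray only the primitive lattice vector is irreducible in the monoid), not the strict monotonicity of the $\alpha_j$ from Proposition~\ref{criteria} --- strict decrease of $\alpha_j$ is consistent with two vectors being proportional, so it does not by itself imply distinct rays. The conclusion you need is nonetheless true, and your overall argument stands. You might also flag, as a shared subtlety with the paper, the degenerate case where $|\cF|$ is a closed half-plane and two support elements are antipodal: the ``open strongly rational cone generated by them'' degenerates, but such a pair is (by the intended convention, and consistently with the existence of the relation $\bfS_{\bfv_1,\bfv_M}$) still classified as non-adjacent, so the conclusion holds.
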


\begin{proof}
The first part of the claim follows straightforwardly from the definition of $\bfS_{\bff,\cF,\bfp}$ and Proposition \ref{curry}.  To prove the rest, it is clear first that any monomial of the form $Y_\bfw^A\,Y_{\bfw'}^{A'},$ with adjacent $\bfw,\,\bfw'\in\cH_{Q_i}$ for some $i,$  and $A,\,A'\in\Z_{\geq0}$ are not divisible by any monomial in 
$\mbox{\rm Lt}\big(\bfS_{\bff,\cF,\bfp}\big),$ which proves one of the implications.
\par To prove the converse, suppose that $\bfY^\bfc=\prod_{\bfv\in\cH}Y_\bfv^{c_\bfv}$ is not divisible by any element in $\mbox{\rm Lt}\big(\bfS_{\bff,\cF, \bfp}\big).$ Due to the first part of the claim, we must have then that $c_\bfw>0$ if and only if $c_{\bfw'}=0$ for any $\bfw'$ non adjacent with $\bfw,$ otherwise, $\bfY^\bfc$ will be divisible by $Y_\bfw Y_{\bfw'}.$ We have the following scenarios:
\begin{itemize}
\item All $c_\bfw=0$, then $\bfY^\bfc=1,$ and the claim holds with $A=A'=0$ in this case.
\item There exists $\bfw_0$ such that $c_{\bfw_0}>0.$ Then, all the $c_{\bfw'}$ with $\bfw'$ non adjacent with $\bfw_0$ will be zero. There are at most two adjacent vectors to $\bfw_0;$ namely, the one immediately ``above''  $\bfw_0,$ and the other which is immediately ``below'' it. Name them $\bfw_0^+$ and $\bfw_0^-$ respectively. 
We may have $c_{\bfw_0}^+>0$ or $c_{\bfw_0^-}>0$, but not the two of them as these two are non-adjacent (they have $\bfw_0$ ``between'' them) and hence the binomial $Y_{\bfw_0^+}Y_{\bfw_0^-}$ belongs to 
$\mbox{\rm Lt}\big(\bfS_{\bff,\bfp,\bfa,\bfb}\big),$ and would divide $Y_{\bfw_0^+}^{c_{\bfw_0^+}}Y_{\bfw0^-}^{c_{\bfw_0^-}}$ if the two exponents are strictly positive. Hence, we have that $\bfY^\bfc=Y_{\bfw_0}^{c_{\bfw_0}}Y_{\bfw_0^+}^{c_{\bfw_0^+}},$ or $\bfY^\bfc=Y_{\bfw_0}^{c_{\bfw_0}}Y_{\bfw_0^-}^{c_{\bfw_0^-}},$ which proves the claim.
\end{itemize}
\end{proof}
Now we are ready for one of the main results of this text.
\begin{theorem}\label{mt}
If $|\cF|$ is a proper rational cone contained in $\R^2,$ no $p_i,\,i=1,\ldots, n,$ is a zero divisor in $\rmR$, then $\bfS_{\bff,\cF, \bfp}$ is a reduced Gr\"obner basis of $\ker(\varphi_0).$ 
\end{theorem}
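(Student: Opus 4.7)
The plan is to verify the defining conditions of a reduced Gröbner basis given in Definition~\ref{reduced}, using the characterization in Theorem~\ref{4.1.12}. Proposition~\ref{curry} already gives $\bfS_{\bff,\cF,\bfp}\subset\ker(\varphi_0)$, and each element has leading coefficient $1$, so the division algorithm from Remark~\ref{usare} applies. The task reduces to showing that every $g\in\ker(\varphi_0)$ reduces to $0$ modulo $\bfS_{\bff,\cF,\bfp}$.

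The crux is the following injectivity claim: \emph{the assignment sending a monomial $Y_\bfw^A Y_{\bfw'}^{A'}$ with $\bfw,\bfw'\in\cH$ adjacent (allowing $A=0$ or $A'=0$) to the lattice vector $\bfv=A\bfw+A'\bfw'$ is injective.} I will prove it by splitting on whether $\bfv$ lies in the relative interior of the sub-cone spanned by some consecutive pair in a Hilbert basis $\cH_{Q_i}$ or on a ray of the fan. In the interior case, $\bfw,\bfw'$ are uniquely determined by $\bfv$ (sub-cones of consecutive Hilbert-basis elements tile each maximal cone), and the two-dimensional linear independence of $\{\bfw,\bfw'\}$ pins down $A,A'>0$. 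On a ray through $\bfu\in\cH$, any representation $A_1\bfw_1+A_1'\bfw_1'$ with adjacent $\bfw_1,\bfw_1'$ and nonnegative coefficients must have $\bfu\in\{\bfw_1,\bfw_1'\}$ and the other coefficient zero, so the monomial reduces unambiguously to $Y_\bfu^A$. Combined with \eqref{linear}, which ensures $A\bff(\bfw)+A'\bff(\bfw')=\bff(\bfv)$ for adjacent pairs, this gives $\varphi_0(Y_\bfw^A Y_{\bfw'}^{A'})=\bfp^{\bff(\bfv)}\bfx^\bfv$ with $\bfv$ (and hence the image) determined by the monomial alone.

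With this in hand, the Gröbner basis property follows quickly. Given $g\in\ker(\varphi_0)$, run the division algorithm to produce a minimal remainder $r$. By Lemma~\ref{txt}(2), every monomial of $r$ has the form $Y_\bfw^A Y_{\bfw'}^{A'}$ with $\bfw,\bfw'$ adjacent, and $r\in\ker(\varphi_0)$ since each reduction step uses an element of $\ker(\varphi_0)$. Writing $r=\sum_j c_j Y_{\bfw_j}^{A_j}Y_{\bfw_j'}^{A_j'}$ with distinct monomials and applying $\varphi_0$, the injectivity claim yields $\sum_j c_j\,\bfp^{\bff(\bfv_j)}\bfx^{\bfv_j}=0$ with pairwise distinct $\bfv_j$. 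Equating coefficients of $\bfx^{\bfv_j}$ gives $c_j\,\bfp^{\bff(\bfv_j)}=0$ for each $j$; since every $p_i$ is a non-zero-divisor, so is the product $\bfp^{\bff(\bfv_j)}$, forcing $c_j=0$ and hence $r=0$.

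It remains to verify reducedness. Each $\bfS_{\bfv,\bfv'}$ has leading coefficient $1$. The leading monomial $Y_\bfv Y_{\bfv'}$ is divisible only by itself among elements of $\mbox{\rm Lt}(\bfS_{\bff,\cF,\bfp})$, since any other quadratic squarefree $Y_\bfu Y_{\bfu'}$ dividing it would force $\{\bfu,\bfu'\}=\{\bfv,\bfv'\}$. The tail monomial $Y_\bfw^\alpha Y_{\bfw'}^{\alpha'}$ has adjacent support by construction in Proposition~\ref{curry}, so Lemma~\ref{txt}(2) places it outside $\mbox{\rm Lt}(\bfS_{\bff,\cF,\bfp})$; thus no term of $\bfS_{\bfv,\bfv'}$ is reducible modulo $\bfS_{\bff,\cF,\bfp}\setminus\{\bfS_{\bfv,\bfv'}\}$. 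I expect the main obstacle to be a careful treatment of lattice points lying on shared rays (either between adjacent sub-cones inside a maximal cone, or between neighboring maximal cones of $\cF$) in the injectivity step, to ensure they do not admit multiple distinct monomial preimages and thereby break the coefficient-matching argument.
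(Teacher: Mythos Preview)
Your proof is correct and takes a closely related but genuinely different route from the paper's.

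The paper argues by contradiction at the level of leading term ideals (characterization~(1) in Theorem~\ref{4.1.12}): it assumes some $P\in\ker(\varphi_0)$ has leading term $r\,Y_\bfv^AY_{\bfv'}^{A'}$ with $\bfv,\bfv'$ adjacent, writes $P=r\,Y_\bfv^AY_{\bfv'}^{A'}+\tilde P$, and shows via a determinant computation that no lex-smaller monomial in $\tilde P$ can contribute the exponent $\bfx^{A\bfv+A'\bfv'}$. Specifically, for any monomial $\prod_{\bfz\preceq\bfv}Y_\bfz^{c_\bfz}$ with $c_\bfv\le A$ whose image matches, pairing with $\bfv'$ via $\det(\cdot,\bfv')$ forces $c_\bfv=A$ and all other $c_\bfz=0$ except $c_{\bfv'}=A'$, contradicting strict lex-smallness.

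You instead use characterization~(2): reduce $g\in\ker(\varphi_0)$ to a remainder $r$ whose monomials all have adjacent support, then invoke a \emph{global} injectivity statement---distinct adjacent-form monomials map to distinct lattice vectors---to conclude that the $\bfx$-supports of the images are disjoint, whence each coefficient is annihilated by a product of non-zero-divisors. Your injectivity claim is exactly what the paper's determinant step establishes in a special case (comparing the leading monomial against lex-smaller ones), but you isolate and prove it once, cleanly, via the tiling of $|\cF|$ by sub-cones of consecutive Hilbert-basis vectors.

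What each buys: the paper's argument is slightly more economical in that it never fully reduces $P$ and only analyzes one exponent; yours is more transparent and reusable, and the ray case you flag as the delicate point is indeed harmless---on a ray through $\bfu\in\cH$, any adjacent-form representation $A\bfw+A'\bfw'$ must have $\bfu\in\{\bfw,\bfw'\}$ with the other coefficient zero (since $|\cF|$ is a proper cone, the only possible antipodal pair is $\bfv_1,\bfv_M$, and these are never adjacent), so the monomial is forced to be $Y_\bfu^A$. Both reducedness arguments are identical.
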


\begin{proof}
Suppose we have a nonzero polynomial  $P(\bfY)\in\ker(\varphi_0)$ whose leading term is not divisible by any of the leading terms of elements in $\bfS_{\bff,\cF, \bfp}.$ Thanks to Lemma \ref{txt}, we will have then 
\begin{equation}\label{astt}
\mbox{\rm Lt}\big(P\big)=r\,Y_\bfv^AY_{\bfv'}^{A'},
\end{equation}
with $r\in \rmR\setminus\{0\}$  $\bfv,\,\bfv'\in\cH$ adjacent vectors, and $A,A'\in\N.$ If $A=A'=0,$ then $P=r\neq0$ and cannot be in the kernel of $\varphi_0$ as $\varphi_0(r)=r.$  So at least one of the exponents must be different from zero. Suppose also w.l.o.g. that ${\bfv'}\prec {\bfv}.$
\par Write $P= r\,Y_\bfv^AY_{\bfv'}^{A'} + \tilde{P}.$
As $\varphi_0(P(\bfY))=P(\varphi_0(\bfY))=0,$ we must have
\begin{equation}\label{chungo}
r\,\bfp^{Af(\bfv)+A'f(\bfv')}\bfx^{A\bfv+A'\bfv'}=-\tilde{P}|_{\bfY_\bfz\mapsto\bfp^{\bff(\bfz)}\bfx^{\bfz}}=\sum_{\bfw}q_\bfw\bfx^{\bfw}
 \end{equation}
with $q_\bfw\in \rmR,$ and all $\bfw$ in $\cH_Q$ lying ``above'' $\bfv,$
i.e. 
\begin{equation}\label{ozu}
\bfw=\sum_{\bfz\preceq {\bfv}}c_\bfz\,\bfz,\ c_\bfz\in\N, \ \mbox{and} \ c_{\bfv}\leq A.
\end{equation}
This is due to the fact that all the exponents  appearing in the polynomial expansion of $\tilde{P}$ have lexicographic degree strictly smaller than $Y_\bfv^AY_{\bfv'}^{A'},$ i.e. they must be of the form
\begin{equation}\label{monom}
\prod_{\bfz\preceq \bfv}Y_\bfz^{c_\bfz},
\end{equation}
with $c_\bfz\in\N,$ and $c_\bfv\leq A.$ By applying $\varphi_0$ to such a monomial, we get one of the exponents in $\bfx$ appearing in \eqref{ozu}.

As no $p_i$ is a zero divisor in $\rmR$, we then have that  the left hand side of \eqref{chungo} is nonzero. Then, there must be some $\bfx^\bfw$ appearing on the right-hand-side with nonzero coefficient, i.e. there must be a family of nonegative integers  $c_\bfz\in\N,$ with $\bfz\preceq \bfv$ and $c_\bfv\leq A,$
such that
\begin{equation}\label{oti}
A\bfv+A'\bfv'= \sum_{\bfz\preceq\bfv} c_\bfz\,\bfz.
\end{equation}
 Hence, we must have that
 $$\begin{array}{ccl}
 \det\left(A\bfv+A'\bfv',\,\bfv'\right)&=& \det\left(\sum c_\bfz\,\bfz,\,\bfv'\right) \\
 A\,\det(\bfv,\,\bfv')&=&c_\bfv\det(\bfv,\,\bfv')+\sum_{\bfz\prec\bfv} c_\bfz \det(\bfz,\,\bfv'),
 \end{array}
 $$
 and from here we deduce that 
 $$0\leq  (A-c_\bfv)\,\det(\bfv,\,\bfv')=\sum_{\bfz\prec\bfv} c_\bfz \det(\bfz,\,\bfv')\leq0,$$
 which implies that $A=c_\bfv,=c_\bfz=0$ if $\bfz\notin\{\bfv,\,\bfv'\},$ and hence $c_{\bfv'}=A'.$ From \eqref{monom} and \eqref{oti} we deduce then that 
 $\prod_{\bfz\preceq \bfv}Y_\bfz^{c_\bfz}=Y_\bfv^A\,Y_{\bfv'}^{A'},$  which is impossible because the monomial in the left hand side of this equality should be strictly smaller than the one in the right hand side in the lexicographic order.
This contradiction shows that  
$$\mbox{\rm Lt}\big(\bfS_{\bff,\cF,\bfp}\big)=\mbox{\rm Lt}(\ker(\varphi_0)),$$ and hence,  $\bfS_{\bff,\cF, \bfp}$  is a Gr\"obner basis of $\ker(\varphi_0).$ The fact that it is reduced follows straightforwardly from its definition, and the shape of the generators defined in \eqref{S}.
\end{proof}
\bigskip

Even though $\bfS_{\bff,\cF,\bfp}$ is a reduced Gr\"obner basis, it is not necessarily a set of minimal generators of $\ker(\varphi_0),$ see cautionary Example \ref{torito}.

\begin{proposition}\label{corcho}
Suppose that for $i=1,\ldots, n,\, p_i\in \rmR$ is neither a unit nor a zero divisor. Then, the family
$\{\bfp^{\bff(\bfv)} \bfx^{\bfv}; \,\bfv\in\cH\}$ is a minimal set of generators of $\cB_{\bff,\cF, \bfp}$(in the sense that no generator is a product of others) if and only if there are no elements in $\bfS_{\bff,\cF,\bfp}$ of the form $Y_\bfv Y_{\bfv'}-Y_\bfw.$
\end{proposition}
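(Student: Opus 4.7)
For the forward direction, suppose $\bfS_{\bff,\cF,\bfp}$ contains an element of the form $Y_\bfv Y_{\bfv'} - Y_\bfz$ for some $\bfz \in \cH$. By Proposition~\ref{curry}, $\bfS_{\bfv,\bfv'}$ has the shape $Y_\bfv Y_{\bfv'} - Y_\bfw^\alpha Y_{\bfw'}^{\alpha'}\bfp^{\bfgamma_{\bfv\bfv'}}$ with $\bfw,\bfw'$ adjacent and $\alpha\bfw+\alpha'\bfw' = \bfv+\bfv'$, so matching against $Y_\bfz$ forces $\alpha=1$, $\alpha'=0$, $\bfw=\bfz=\bfv+\bfv'\in\cH$, and $\bfgamma_{\bfv\bfv'}=0$, i.e.\ $\bff(\bfv)+\bff(\bfv') = \bff(\bfz)$. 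Applying $\varphi_0$ yields $g_\bfv\cdot g_{\bfv'} = \bfp^{\bff(\bfv)+\bff(\bfv')}\bfx^{\bfv+\bfv'} = g_\bfz$, and since $\bfv,\bfv'$ are nonzero with $\bfv+\bfv'=\bfz$, both differ from $\bfz$, so $g_\bfz$ is a product of two other generators, contradicting minimality.

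For the converse we argue the contrapositive: assume $g_\bfw = r\prod_i g_{\bfv_i}^{a_i}$ with $\bfv_i \in \cH\setminus\{\bfw\}$, $a_i\geq 1$, and $r\in \rmR$. Matching $\bfx$-exponents gives $\bfw = \sum a_i \bfv_i$, and matching $\bfp$-coefficients gives $\bfp^{\bff(\bfw)} = r\bfp^{\sum a_i \bff(\bfv_i)}$. By subadditivity of $\bff$, the vector $\udelta := \sum a_i \bff(\bfv_i) - \bff(\bfw)$ has nonnegative entries, so $\bfp^{\bff(\bfw)}(1 - r\bfp^{\udelta}) = 0$. Since each $p_i$ is a non-zero divisor, so is $\bfp^{\bff(\bfw)}$, forcing $r\bfp^{\udelta} = 1$; the hypothesis that no $p_i$ is a unit then forces $\udelta = 0$ and $r=1$, whence $\bff(\bfw) = \sum a_i \bff(\bfv_i)$, and $N:= \sum a_i \geq 2$ because every $\bfv_i\neq\bfw$.

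It now suffices to produce non-adjacent $\bfu,\bfu'\in\cH$ with $\bfu+\bfu'\in\cH$ and $\bff(\bfu)+\bff(\bfu') = \bff(\bfu+\bfu')$, since Proposition~\ref{curry} would then give $\bfS_{\bfu,\bfu'} = Y_\bfu Y_{\bfu'} - Y_{\bfu+\bfu'}$. For $N=2$ one first excludes $\bfw = 2\bfv_1$: $\bfv_1$ and $\bfw$ share a ray, so any cone $C\in\cF$ containing $\bfw$ also contains $\bfv_1$, making $\bfw = \bfv_1+\bfv_1$ a nontrivial decomposition in $Q_C$ contradicting $\bfw\in\cH_{Q_C}$. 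Hence $\bfw=\bfv_1+\bfv_2$ with $\bfv_1,\bfv_2\in\cH$ distinct; they are non-adjacent because $\bfw$ lies in the open cone they span, and the pair works. For $N\geq 3$ we invoke the strictness of $\bff$ (assumed w.l.o.g.\ throughout the paper): strictness together with $\bff(\bfw) = \sum a_i\bff(\bfv_i)$ forces the $\bfv_i$ to lie in a common maximal cone $C$ of $\cF$, and then $\bfw\in C$ decomposes nontrivially in $Q_C$, contradicting $\bfw\in\cH_{Q_C}$ unless $\bfw$ lies on a shared face $\tau$ of $C$; but the face-characterization of strongly convex cones then forces each $\bfv_i\in\tau$, so $\bfv_i=\bfe_\tau$, and $\bfw = N\bfe_\tau$ fails to be primitive, contradicting $\bfw\in\cH$. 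The main obstacle is the common-cone claim itself: proving it cleanly requires a case analysis of piecewise-linearity showing that strict non-linearity of some $f_{k_i}$ on $C_i\cup C_{i+1}$, combined with $\bff$-additivity across this boundary, compels one summand onto the shared ray $\tau_{i,i+1}$, after which iteration across the chain of maximal cones yields the result.
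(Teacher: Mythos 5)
The forward implication and the reduction of the converse to producing a relation $Y_{\bfu}Y_{\bfu'}-Y_{\bfu+\bfu'}$ are carried out correctly, and your $N=2$ argument is clean and self‑contained. However, there is a genuine gap in the $N\ge 3$ case, and it is not a minor one.

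You invoke strictness of $\bff$, but strictness is \emph{not} a hypothesis of Proposition~\ref{corcho}. The ``w.l.o.g.''\ remark after the definition of strictness refers to the algebra $\cB_{\bff,\cF,\bfp}$ itself, which is insensitive to refining the fan; but Proposition~\ref{corcho} is a statement about the specific presentation data $\cH$ and $\bfS_{\bff,\cF,\bfp}$, both of which change (indeed, shrink) when you pass to a coarser fan. So coarsening to make $\bff$ strict changes the proposition you are trying to prove, and cannot be applied here. The logical structure of the paper confirms this: Proposition~\ref{triqui} adds the strictness hypothesis precisely in order to rule out, via Proposition~\ref{corcho}, any element $Y_\bfv Y_{\bfv'}-Y_\bfw$ in $\bfS_{\bff,\cF,\bfp}$; if Proposition~\ref{corcho} already implicitly assumed strictness, it would be proving something that under that hypothesis is always true, and Proposition~\ref{triqui} would be circular or vacuous. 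Moreover, strictness actually \emph{does} fail in exactly the situations this proposition is meant to detect — for instance, when some face of $\cF$ is not a true break of linearity, so that an interior vector of a coarser cone slips into $\cH$ and $g_\bfw$ factors; in those situations $N\ge 3$ decompositions such as $\bfw=2\bfv_1+\bfv_2$ genuinely occur and your claimed contradiction does not. Finally, even granting strictness, you yourself acknowledge that the key ``common-cone'' step is only sketched and requires a case analysis that is not supplied. To close the gap without strictness you would need to show that whenever $g_\bfw=\prod g_{\bfv_i}^{a_i}$ with $\sum a_i\ge 3$, there is also a two-term factorization $g_\bfw=g_{\bfu_1}g_{\bfu_2}$ with $\bfu_1,\bfu_2\in\cH$, which is not addressed.

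For comparison, the paper's own proof of the converse takes an entirely different route and does not split on $N$ at all. From a relation $\prod_\bfw Y_\bfw^{\alpha_\bfw}-Y_{\bfw_0}\in\ker(\varphi_0)$ it reduces $\prod_\bfw Y_\bfw^{\alpha_\bfw}$ by the Gr\"obner basis $\bfS_{\bff,\cF,\bfp}$ (Theorem~\ref{mt}); since each generator $Y_\bfv Y_{\bfv'}-Y_{\bfw}^\alpha Y_{\bfw'}^{\alpha'}\bfp^{\bfgamma_{\bfv\bfv'}}$ with $\alpha+\alpha'\ge 2$ leaves the total degree nondecreasing, reaching the degree-one remainder $Y_{\bfw_0}$ forces a reduction step using some $Y_\bfv Y_{\bfv'}-Y_\bfz\,\bfp^{\bfgamma_{\bfv\bfv'}}$; if no such element has $\bfgamma_{\bfv\bfv'}=0$, then $\bfp^{\bfgamma_{\bfv\bfv'}}\in\langle p_{i_0}\rangle$ for some $i_0$, and passing to $\rmR/\langle p_{i_0}\rangle$ via Lemma~\ref{tecc} makes the remainder vanish, contradicting $Y_{\bfw_0}\not\equiv 0$. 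That argument uses only the non-unit and non-zero-divisor hypotheses and is the mechanism your proof would need to replace.
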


\begin{proof}[Proof of Proposition \ref{corcho}]
Clearly, if there is an element of the form  $Y_\bfv Y_{\bfv'}-Y_{\bfw}$ in  $\bfS_{\bff,\cF,\bfp},$ then the generators are not minimal. Conversely, suppose that $\bfp^{\bff(\bfw_0)} \bfx^{\bfw_0}$ is a product of other elements of the family. This would be equivalent to the existence of an element of the form
\begin{equation}\label{rojo}
\prod_{\bfw}Y_{\bfw}^{\alpha_{\bfw}}-Y_{\bfw_0}
\end{equation} lying in the kernel of $\varphi_0.$ Due to Theorem \ref{mt}, we then have that  if we divide this element by the Gr\"obner basis $\bfS_{\bff,\cF,\bfp}$ the remainder will be zero. As $Y_{\bfw_0}$ is not divisible by any of the (quadratic) leading monomials of the elements of $\bfS_{\bff,\cF, \bfp}$, we must have that the leading term of \eqref{rojo} is actually $\prod_{\bfw}Y_{\bfw}^{\alpha_{\bfw}},$ and hence its total degree in the $\bfY$ variables, namely $\sum_{\bfw} \alpha_{\bfw} $, must be at least equal to 2. Moreover, \eqref{rojo} implies that the remainder of the division of $\prod_{\bfw}Y_{\bfw}^{\alpha_{\bfw}}$ with respect to $\bfS_{\bff,\cF, \bfp}$ is equal to $Y_{\bfw_0}.$
\par In what follows the total degree of a monomial refers to the sum of the exponents appearing in the monomial. Due to the particular shape of the elements of $\bfS_{\bff,\cF, \bfp}$ given in \eqref{S}, every time we divide a monomial of the form  $\prod_{\bfw}Y_{\bfw}^{\alpha_{\bfw}}$ with one of the elements of the Gr\"obner basis, such as $Y_\bfv Y_{\bfv'}-Y_\bfw^\alpha Y_{ \bfw'}^{\alpha'}\bfp^{\bfgamma_{\bfv\bfv'}}$, one of the following two possible outcomes happen:
\begin{enumerate}
\item[i)] the total degree of the remainder does not decrease (this is the case when $\alpha+\alpha'\geq2$  in \eqref{S}),
\item[ii)] the total  degree decreases by one when the division is performed over an element of the form
$Y_\bfv Y_{\bfv'}-Y_\bfz\,\bfp^{\bfgamma_{\bfv\bfv'}}$  (here, $\alpha+\alpha'\leq1$ and $\bfz$ is either $\bfw$ or $\bfw'$).
\end{enumerate}
In any case, the remainder of this division is of the form $\bfp^{\bfgamma}\prod_{\bfw}Y_{\bfw}^{\alpha'_{\bfw}}-Y_{\bfw_0},$ for some $\bfgamma,\,\alpha'_\bfw.$
 At the beginning of the division of $\prod_{\bfw}Y_{\bfw}^{\alpha_{\bfw}}$ with respect to the Gr\"obner basis $\bfS_{\bff,\cF,\bfp}$,  the aforementioned monomial has total degree at least two $2.$ After a finite number of steps within the algorithm, we must obtain the remainder $Y_{\bfw_0},$ which has total degree one. This implies that  step ii) must happen at least once. 
\par 
If there are no elements in $\bfS_{\bff,\cF,\bfp}$ of the form $Y_\bfv Y_{\bfv'}-Y_\bfw,$
then for $Y_\bfv Y_{\bfv'}-Y_\bfz\,\bfp^{\bfgamma_{\bfv\bfv'}}\in\bfS_{\bff,\cF, \bfp},$ we will have that $\bfp^{\bfgamma_{\bfw\bfw'}}\in (p_{i_0})\neq \rmR$ for some $i_0\in\{1,\ldots, n\}.$  
 In $\rmR/(p_{i_0})\neq(0),$ the binomial $Y_\bfv Y_{\bfv'}-Y_\bfz\,\bfp^{\bfgamma_{\bfv\bfv'}}$ actually becomes
$Y_\bfv Y_{\bfv'},$ and the division algorithm stops here, which implies that the remainder in this quotient ring is equal to zero. By Lemma \ref{tecc}, this contradicts the fact that $Y_{\bfw_0}\mod p_{i_0}\neq0.$ 
This proves that an expression like \eqref{rojo} can never exists in $\ker(\varphi_0),$ i.e. the family $\{\bfp^{\bff(\bfv)} \bfx^{\bfv}; \,\bfv\in\cH\}$ is a minimal set of generators of $\cB_{\bff,\cF,\bfp}$.
\end{proof}

Recall from Section \ref{main} that  we say that  $\bff$ is  strict with respect to the fan $\cF$ if for each $1\leq i<\ell,$ there is $k_i\in\{1,\ldots, n\}$ such that $f_{k_i}$ is not linear in $C_i\cup C_{i+1}.$  The fan $\cF$ can always be chosen in such a way that this condition is accomplished.

\begin{definition}
Let $\bfw, \bfw' \in \cH$ and $\bff$ strict with respect to the fan $\cF$. We say that $f_k$ separates $\bfw$ and $\bfw'$ if $f_k(\bfw)+f_k(\bfw')-f_k(\bfw+\bfw')>0$. Similarly, if $f_k$ is not linear in $C_i \cup C_{i+1}$, then we say that $f_k$ separates $C_i$ and $C_{i+1}$.
\end{definition}

We leave it to the reader to check that if $f_k$ separates $C_i$ and $C_{i+1}$, then $f_k$ separates any vectors $\bfw, \bfw'$, where $\bfw \in C_j, j \leq i$ and $\bfw' \in C_l, l \geq {i+1}.$

\begin{proposition}\label{triqui}
If  $\bff$ is strict with respect to $\cF,$ and no $p_i$ is a zero divisor or a unit, then
$\{\bfp^{\bff(\bfv)} \bfx^{\bfv}; \,\bfv\in\cH\}$ is a minimal set of generators of $\cB_{\bff,\cF,\bfp}$.
\end{proposition}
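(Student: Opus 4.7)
The plan is to invoke Proposition~\ref{corcho}, whose hypotheses on the $p_i$ are exactly the ones assumed here; this reduces the problem to showing that no element of $\bfS_{\bff,\cF,\bfp}$ has the exact shape $Y_\bfv Y_{\bfv'}-Y_\bfw$. Reading off the general form \eqref{S} of such an element, collapsing to $Y_\bfv Y_{\bfv'}-Y_\bfw$ requires both $\alpha+\alpha'=1$ in the decomposition $\bfv+\bfv'=\alpha\bfw+\alpha'\bfw'$ \emph{and} $\bfgamma_{\bfv\bfv'}=\mathbf{0}$. So for each non-adjacent pair $\bfv\neq\bfv'\in\cH$ I have to rule out the simultaneous occurrence of these two conditions.

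I would dichotomize on whether a common maximal cone contains both $\bfv$ and $\bfv'$. In the negative case, the clockwise arc from $\bfv$ to $\bfv'$ must cross at least one wall between consecutive maximal cones, so there exists $m$ with $\bfv\in C_j$ for some $j\leq m$ and $\bfv'\in C_l$ for some $l\geq m+1$. Strictness of $\bff$ supplies an $f_{k_m}$ which is not linear on $C_m\cup C_{m+1}$, and the remark following the definition of ``separates'' then yields $f_{k_m}(\bfv)+f_{k_m}(\bfv')>f_{k_m}(\bfv+\bfv')$. Hence the $k_m$-th coordinate of $\bfgamma_{\bfv\bfv'}$ (see \eqref{nueve}) is strictly positive, and $\bfgamma_{\bfv\bfv'}\neq\mathbf{0}$.

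The delicate step is the case when $\bfv,\bfv'$ both lie in the same maximal cone $C_i$: here all $f_k$ are linear on $C_i$, so $\bfgamma_{\bfv\bfv'}$ is automatically zero and I must instead exclude $\alpha+\alpha'=1$; equivalently, I must show that $\bfu:=\bfv+\bfv'$ is not an element of $\cH$. Suppose, for contradiction, that $\bfu\in\cH$. Since $C_i$ is convex, $\bfu\in C_i\cap\Z^2$; but $\bfu=\bfv+\bfv'$ is a nontrivial sum of two nonzero elements of $C_i\cap\Z^2$, and Hilbert basis elements are irreducible inside their own monoid, so $\bfu\notin\cH_{Q_i}$. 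Therefore $\bfu\in\cH_{Q_j}$ for some $j\neq i$, forcing $\bfu\in C_i\cap C_j$; as this is a proper common face in $\R^2$ and $\bfu\neq\mathbf{0}$, the vector $\bfu$ lies on a ray $\rho$ shared by $C_i$ and $C_j$. Since $\rho$ is a face of the convex cone $C_i$ and $\bfv+\bfv'\in\rho$ with $\bfv,\bfv'\in C_i$, the standard supporting-hyperplane argument forces $\bfv,\bfv'\in\rho$. But $\cH\cap\rho$ consists of exactly one element, namely the primitive lattice vector of $\rho$, so $\bfv=\bfv'$, contradicting $\bfv\neq\bfv'$.

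The hard part is precisely this last geometric argument in the common-cone case, because one must see why $\bfv+\bfv'$ cannot accidentally land back in $\cH$. Once it is reduced to the two convex-cone facts (irreducibility of Hilbert basis elements in their own monoid, and the ``sum on a face forces both summands on that face'' property), both cases combine with Proposition~\ref{corcho} to deliver the conclusion.
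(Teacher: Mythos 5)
Your proof is correct and follows essentially the same route as the paper: reduce via Proposition~\ref{corcho} to excluding relations of the form $Y_\bfv Y_{\bfv'}-Y_\bfw$, then split on whether $\bfv,\bfv'$ lie in a common maximal cone, using strictness of $\bff$ plus the non-unit hypothesis in the split case. Your treatment of the common-cone case is more detailed than the paper's one-line ``otherwise the three vectors would be part of a Hilbert basis'': you explicitly address the possibility $\bfv+\bfv'\in\cH_{Q_j}$ with $j\neq i$ via the supporting-hyperplane argument, which is a small but genuine gap in the paper's terser phrasing.
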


\begin{proof}
Thanks to  Proposition \ref{corcho}, we only need to know that there are no elements of the form  $Y_\bfv Y_{\bfv'}-Y_\bfw$ in $\bfS_{\bff,\cF,\bfp}$. But if this is the case, then we must have $\bfv+\bfv'=\bfw,$ which implies straightforwardly that the vectors $\bfv$ and $\bfv'$ belong to different cones as otherwise the three vectors $\bfv, \bfv', \bfw$ would be part of a Hilbert basis, which would not be minimal because of the relation among them. Hence, the element appearing in $\bfS_{\bff,\bfp,\bfa,\bfb}$ is actually $Y_\bfv Y_{\bfv'}-Y_\bfw\bfp^{\bfgamma_{\bfv\bfv'}}$. The fact that $\bff$ is strict combined with the hypothesis that none of the $p_i$'s is a unit  implies that   $\bfp^{\bfgamma_{\bfv\bfv'}}\neq1.$ This concludes the proof.
\end{proof}

Now we turn to the problem of finding conditions to ensure that $\bfS_{\bff,\cF,\bfp}$ is a minimal set of generators of $\ker(\varphi_0).$
The following claim will be used to prove the main result in this direction.
\begin{proposition}\label{aux}
Suppose that $\bff$ is strict with respect to $\cF,$ and that $|\cF|$ is  a strongly convex rational cone. For $\bfv\in\cH,$ set
$$D_\bfv:=\{(\bfw,\,\bfw')\in\cH^2,\, \bfw\prec \bfw',\, \bfw+\bfw'=\bfv\}.$$
If $D_\bfv\neq\emptyset,$ then  there exists $k\in\{1,\ldots, n\}$ such  $f_k(\bfw)+f_k(\bfw')-f_k(\bfw+\bfw')>0$ for all $(\bfw,\bfw')\in D_\bfv.$
\end{proposition}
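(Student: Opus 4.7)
The plan is to find a single boundary ray $\rho$ of $\cF$ that strictly separates $\bfw$ and $\bfw'$ for every $(\bfw, \bfw') \in D_\bfv$; the strictness of $\bff$ will then supply an $f_k$ with a genuine bend across that ray, and convexity of $f_k$ will deliver strict subadditivity uniformly.

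First I would verify that for any $(\bfw, \bfw') \in D_\bfv$, the vectors $\bfw$ and $\bfw'$ lie in distinct maximal cones of $\cF$. Indeed, if both sat in some $C_r$ then $\bfv = \bfw + \bfw' \in C_r$, and writing $\bfv \in \cH_{Q_s}$ one sees that either $s = r$ (contradicting minimality of $\cH_{Q_r}$) or else $\bfv \in C_r \cap C_s$ sits on a boundary ray, which forces $\bfw$ and $\bfw'$ to be positive multiples of the primitive generator of that ray; since that primitive generator is the unique element of $\cH$ on it, $\bfw = \bfw'$, contradicting $\bfw \prec \bfw'$.

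Next I would choose a ``tightest'' pair $(\bfw_0, \bfw'_0) \in D_\bfv$ by making $\bfw_0$ maximal in the clockwise order $\prec$, and let $a$ be the largest index with $\bfw_0 \in C_a$. By the previous step $\bfw'_0 \notin C_a$, so $\bfw'_0$ sits only in cones $C_j$ with $j \geq a+1$; the ray $\rho$ between $C_a$ and $C_{a+1}$ therefore strictly separates $\bfw_0$ from $\bfw'_0$. For any other $(\bfw, \bfw') \in D_\bfv$, the identity $\bfw + \bfw' = \bfw_0 + \bfw'_0$ together with maximality of $\bfw_0$ give $\bfw \preceq \bfw_0$ and $\bfw' \succeq \bfw'_0$, placing $\bfw$ strictly above $\rho$ and $\bfw'$ strictly below it; thus the single ray $\rho$ strictly separates every pair.

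By strictness of $\bff$, pick $k$ such that $f_k$ is not linear on $C_a \cup C_{a+1}$. Subadditivity plus $\N$-linearity on cones extend $f_k$ to a positively homogeneous subadditive---hence convex---piecewise linear function on $|\cF| \subset \R^2$. For any $(\bfw, \bfw') \in D_\bfv$, the function $g(t) := f_k\bigl((1-t)\bfw + t\bfw'\bigr)$ is convex and piecewise linear on $[0,1]$ with a genuine kink at the point $t_0 \in (0,1)$ where the segment crosses $\rho$; consequently $g(1/2) < \tfrac{1}{2}(g(0) + g(1))$, and using positive homogeneity $f_k(\bfv) = 2 g(1/2)$, this rearranges to $f_k(\bfw) + f_k(\bfw') - f_k(\bfv) > 0$. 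The main obstacle is this common-separator step: individually each pair is trivially separated by \emph{some} boundary, but different pairs could a priori require different $f_k$'s, and the tightest-pair trick is what guarantees one uniform choice works for all.
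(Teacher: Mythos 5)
Your plan correctly isolates the crux (finding one boundary ray that separates every pair in $D_\bfv$), and the first and last steps are fine. But the pivotal claim — that from $\bfw+\bfw'=\bfw_0+\bfw'_0$ and $\bfw\preceq\bfw_0$ it follows that $\bfw'\succeq\bfw'_0$ — is asserted without proof, and it is false in general. The angular order is not compatible with subtraction in this way: $\det(\bfw',\bfw'_0)=\det(\bfv,\bfw)-\det(\bfv,\bfw_0)+\det(\bfw,\bfw_0)$, a sum of two negative terms and one positive term, whose sign is not determined. Indeed your proof never invokes the hypothesis that $|\cF|$ is \emph{strongly} convex, yet the paper exhibits a counterexample to the proposition when $|\cF|$ is a half-plane. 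Concretely, take $\cH=\{(-1,0),(-1,1),(0,1),(1,1),(1,0)\}$ and $\bfv=(0,1)$, so $D_\bfv=\{((-1,0),(1,1)),\,((-1,1),(1,0))\}$. The tightest pair has $\bfw_0=(-1,1)$, $\bfw'_0=(1,0)$, but the other pair has $\bfw'=(1,1)\prec(1,0)=\bfw'_0$. So the two decompositions are \emph{interleaved}, not nested, and no single ray separates both pairs.

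What must be proved, therefore, is that the interleaved configuration $\bfw\prec\bfw_0\prec\cdots\prec\bfw'\prec\bfw'_0$ with $\bfw+\bfw'=\bfw_0+\bfw'_0=\bfv$ cannot occur when $|\cF|$ is strongly convex; this is precisely the content of display \eqref{asunto} in the paper's proof, and it is nontrivial. The paper observes that in such a configuration $\bfv$ lies in the Hilbert basis of the cone spanned by $\bfw_0$ and $\bfw'$, then picks a linear form $\bfv_0$ strictly positive on $|\cF|\setminus\{0\}$ (possible only because $|\cF|$ is strongly convex) and derives $\max\{\langle\bfv_0,\bfw_0\rangle,\langle\bfv_0,\bfw'\rangle\}<\langle\bfv_0,\bfv\rangle$ from the two sum identities; this contradicts Lemma \ref{alfin}, which in turn rests on Pick's theorem. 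Your argument has no substitute for this step, so there is a genuine gap exactly where the hard part of the proposition lives.
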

\begin{proof}
As before, if we have $\bfw+\bfw'=\bfv,$ as these three vectors belong to $\cH,$ we must have then  that $\bfw$ and $\bfw'$ are in different cones, and $\bfv$ ``between'' them, i.e. $\bfw\prec\bfv\prec\bfw'.$ In particular, we have that $\bfw$ and $\bfw'$ are not adjacents. Suppose that $\bfv\in C_i.$ We have two possible scenarios for $\bfw$ and $\bfw':$
\begin{enumerate}
\item[a)] $\bfw\in C_j,\, j<i$ and $\bfw'\in C_l,\,l\geq i,$
\item[b)] $\bfw\in C_i,$ and $\bfw'\in C_l,\,l> i.$
\end{enumerate}
As $\bff$ is strict with respect to $\cF,$  if we pick $k$ as the index of the function which separates $C_{i-1}$ and $C_i,$ all the cases considered in a) are covered. On the other hand, to deal with the cases appearing in b) we should pick as $k$ the index of the function which separates $C_i$ with $C_{i+1}.$ Note that this index also works for those cases in a) where $\bfw'\in C_j$ with $j>i.$
\par To conclude the proof we must show that we cannot have simultaneously
\begin{equation}\label{asunto}
\bfw+\bfw'=\bfz+\bfz'=\bfv,
\end{equation}
with $\bfw,\,\bfw',\,\bfz,\,\bfz'\in\cH,\ \bfw\in\C_j,\, j<i,\,\bfw'\in C_i,\, \bfz\in C_i, \, \bfz'\in C_{j'},\, j'>i,$ i.e. we only have to consider one  of the two possible $k$'s above  for all the cases. Suppose then that \eqref{asunto} holds, and also w.l.o.g. that the sequence $\{\bfz,\,\bfv,\,\bfz'\}$ is sorted clockwise. We straightforwardly get that
$\{\bfw,\,\bfz,\,\bfv,\,\bfw',\,\bfz'\}$ is sorted clockwise as the three vectors in the middle are in $C_i.$
So we must have that $\bfv$ is an element of the (unique) Hilbert basis of the monoid of lattice points in the cone generated by $\bfz$ and $\bfw'$.  
\par As the support of the fan $|\cF|$ is a strongly rational cone, there exists $\bfv_0\in\Z^2$ such that
$\langle \bfv_0, \bfv'\rangle>0$ for all $\bfv'\in|\cF|.$ From \eqref{asunto} we get then that
$$\max\{\langle \bfv_0,\,\bfw'\rangle, \, \langle \bfv_0,\,\bfz\rangle \}< \langle \bfv_0,\,\bfv\rangle,
$$
which is a contradiction with Lemma \ref{alfin}. This concludes the proof of the Proposition.
\end{proof}
\smallskip

The hypothesis that $|\cF|$ is strongly convex is necessary as the following example shows.
\begin{example}
Consider $\cF$ given by its three maximal cones $C_1, C_2,$ and $C_3$ with
\begin{itemize}
\item $C_1$ being generated by $(-1,0)$ and $(-1,1)$,
\item $C_2$ being generated by $(-1,1)$ and $(1,1),$
\item $C_3$ being generated by $(1,1)$ and $(1,0)$.
\end{itemize}
In this case, we have that $\cH=\{\bfv_1:=(-1,0),\,\bfv_2:=(-1,1),\,\bfv_3:=(0,1),\,\bfv_4:=(1,1),\,\bfv_5:=(1,0)\},$
and these vectors satisfy
$$(-1,0)+(1,1)=(-1,1)+(1,0)=(0,1),
$$
which is a situation like \eqref{asunto}. Now set
$$\begin{array}{ccl}
f_1(\bfv)&=&\left\{\begin{array}{ccl}
0&\mbox{if}& \bfv\in C_1,\\
v_1+v_2&\mbox{if}& \bfv\in C_2\cup C_3
\end{array}
\right.\\ \\
f_2(\bfv)&=&\left\{\begin{array}{ccl}
v_2-v_1&\mbox{if}& \bfv\in C_1\cup C_2,\\
0&\mbox{if}& \bfv\in C_3
\end{array}
\right.\\
\end{array}
$$
As $f_1$ separates $C_1$ from $C_2$ and $f_2$ separates $C_2$ from $C_3,$ we get that $\bff=(f_1,f_2)$ is strict, but
$$
f_1(-1,1)+f_1(1,0)-f_1(0,1)=0=f_2(-1,0)+f_2(1,1)-f_2(0,1).
$$
\end{example}
\begin{remark}\label{ast}
If we do not assume $|\cF|$ being strongly convex but only contained in a hyperplane, the proof of Proposition \ref{aux} can be applied also to this situation, but we would conclude that, given $D_\bfv \neq \emptyset$, there exists a set of indices $S$ with exactly two elements, such that for every $(\bfw, \bfw') \in D_\bfv$, there exists $k \in S$ such that $f_k$ separates $\bfw$ and $\bfw'$; namely, we let $S$ consists of the index  separating $C_{i-1}$ from  $C_i,$ and the index which separates $C_i$ from $C_{i+1}.$
\end{remark}

Recall that if $\bff$ is strict, then for each $i=1,\ldots, \ell-1,$ there is an index $k_i$ such that $f_{k_i}$ is non linear in $C_i\cup C_{i+1}.$ If this is the case, we will assume that we are given already a distinguished list $\{k_1,\ldots, k_{\ell-1}\}$ of such indices.

\begin{theorem}\label{mtb}
With notations as above, if $|\cF|$ is a proper convex rational  cone,  $\bff$  strict with respect to $\cF,$  no  $p_{k_i}$ is  a zero divisor,  $i= 1, \dots, \ell$, and  for $1\leq i\leq j\leq \ell-1,\, \langle p_{k_i}, p_{k_j}\rangle\neq\rmR,$ then $\bfS_{\bff, \cF, \bfp}$  is a set of minimal generators of $\ker(\varphi_0).$
\end{theorem}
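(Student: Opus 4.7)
The plan is to argue by contradiction and to extract all of the needed information from the coefficient of a single, well-chosen monomial. Suppose that some $\bfS_{\bfv_0,\bfv'_0} \in \bfS_{\bff,\cF,\bfp}$ lies in the $\rmR[\bfY]$-submodule generated by the remaining elements of the set. Using the $\Z^2$-grading on $\rmR[\bfY]$ (in which $\deg Y_\bfv = \bfv$ and elements of $\rmR$ have degree $0$), together with the fact that every $\bfS_{\bfw,\bfw'}$ is homogeneous of degree $\bfw+\bfw'$, one can pick a $\Z^2$-homogeneous relation
\[
\bfS_{\bfv_0,\bfv'_0} \;=\; \sum_j h_j\,\bfS_{\bfw_j,\bfw'_j}, \qquad (\bfw_j,\bfw'_j) \neq (\bfv_0,\bfv'_0),
\]
in which each $h_j$ is homogeneous of $\Z^2$-degree $(\bfv_0+\bfv'_0)-(\bfw_j+\bfw'_j)$. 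The summands split naturally into those for which this degree is $(0,0)$, so $h_j \in \rmR$, and those for which it is nonzero, in which case every monomial of $h_j$ has strictly positive $\bfY$-degree.

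The main step is to compare, on both sides of the relation, the coefficient of the monomial $Y_{\bfv_0} Y_{\bfv'_0}$. On the left this coefficient equals $1$: the trailing of $\bfS_{\bfv_0,\bfv'_0}$ has the form $Y_\bfw^\alpha Y_{\bfw'}^{\alpha'}\,\bfp^{\bfgamma_0}$ with $\bfw,\bfw'\in\cH$ \emph{adjacent}, while $\bfv_0,\bfv'_0$ are non-adjacent by hypothesis, so the monomial $Y_\bfw^\alpha Y_{\bfw'}^{\alpha'}$ cannot coincide with $Y_{\bfv_0}Y_{\bfv'_0}$. The same contrast between adjacency and non-adjacency rules out any contribution from a summand with $h_j\in\rmR$ and $(\bfw_j,\bfw'_j)\neq(\bfv_0,\bfv'_0)$, whether that contribution were to come from the leading or from the trailing monomial of $\bfS_{\bfw_j,\bfw'_j}$. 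For the remaining summands, every monomial of $h_j$ has $\bfY$-degree at least $1$, so $h_j\cdot Y_{\bfw_j}Y_{\bfw'_j}$ sits in $\bfY$-degree $\geq 3$ and is irrelevant; the only way to hit $Y_{\bfv_0}Y_{\bfv'_0}$ is via $h_j\cdot(\text{trailing})$ with both factors linear in $\bfY$. This forces $\bfw_j+\bfw'_j\in\cH$, the trailing to be $Y_{\bfw_j+\bfw'_j}\,\bfp^{\bfgamma_j}$, and the $\bfY$-degree-$1$ part of $h_j$ to be a monomial $r_j Y_{\bfu_j}$ with $\{\bfu_j,\bfw_j+\bfw'_j\} = \{\bfv_0,\bfv'_0\}$. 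In the notation of Proposition~\ref{aux}, this places $(\bfw_j,\bfw'_j)$ in $D_{\bfv_0}\cup D_{\bfv'_0}$.

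Collecting these contributions produces an identity
\[
1 \;=\; -\sum_{(\bfw_j,\bfw'_j)\in D_{\bfv_0}\cup D_{\bfv'_0}} r_j\,\bfp^{\bfgamma_j}
\]
in $\rmR$. At this point I would apply Proposition~\ref{aux} to obtain distinguished indices $k',k''\in\{k_1,\ldots,k_{\ell-1}\}$ such that $p_{k'}\mid \bfp^{\bfgamma_j}$ for every $(\bfw_j,\bfw'_j)\in D_{\bfv_0}$ and $p_{k''}\mid \bfp^{\bfgamma_j}$ for every $(\bfw_j,\bfw'_j)\in D_{\bfv'_0}$. The right-hand side then lies in $\langle p_{k'},p_{k''}\rangle$, forcing this ideal to equal $\rmR$ and contradicting the pairwise hypothesis $\langle p_{k_i},p_{k_j}\rangle\neq\rmR$ (the degenerate cases in which one or both of $D_{\bfv_0},D_{\bfv'_0}$ is empty are handled in the same way, allowing $i=j$ in the hypothesis so that $\langle p_{k'}\rangle \neq \rmR$). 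The main obstacle I expect is not the final contradiction but the exhaustiveness of the case analysis identifying which right-hand summands can contribute to the coefficient of $Y_{\bfv_0}Y_{\bfv'_0}$: every possible combination of $\bfY$-degrees for $h_j$ and for the trailing of $\bfS_{\bfw_j,\bfw'_j}$ has to be ruled out except the linear/linear one, and the uniform mechanism closing each subcase is the same contrast between the adjacency of the variables appearing in any trailing and the non-adjacency of $\bfv_0,\bfv'_0$, supplemented by Proposition~\ref{aux} which converts the surviving arithmetic obstruction into a statement about the pairwise ideals $\langle p_{k_i},p_{k_j}\rangle$.
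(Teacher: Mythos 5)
Your coefficient-extraction argument is essentially the paper's proof: the authors reach the same decomposition of the $Y_{\bfv_0}Y_{\bfv'_0}$-coefficient by substituting $Y_\bfz\mapsto0$ for $\bfz\notin\{\bfv_0,\bfv'_0\}$ in the relation and then reading off that coefficient, which amounts to the same case analysis you do directly on the homogeneous relation, and they then invoke Proposition~\ref{aux} exactly as you do. Your presentation is marginally cleaner bookkeeping but the same strategy.

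However, there is a genuine gap. You apply Proposition~\ref{aux} without checking its hypothesis: that proposition is stated and proved only when $|\cF|$ is \emph{strongly convex}, and the paper gives an explicit counterexample (the example immediately following Proposition~\ref{aux}) showing that the single separating index can fail to exist when $|\cF|$ is a closed half-plane. A ``proper convex rational cone'' in the sense of Definition~\ref{1.2} includes closed half-planes, so Theorem~\ref{mtb} covers that situation but your argument does not. In the half-plane case, Remark~\ref{ast} only yields a two-element set of separating indices for each of $D_{\bfv_0}$ and $D_{\bfv'_0}$; if $\bfv_0$ and $\bfv'_0$ lie in distinct cones $C_i,C_{i'}$, these two-element sets need not overlap, so your final identity puts $1$ in an ideal with up to four generators, which the pairwise hypothesis $\langle p_{k_a},p_{k_b}\rangle\neq\rmR$ does not immediately contradict. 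The paper closes this with a separate, more delicate argument: it splits into the sub-cases where $\bfv_0,\bfv'_0$ are in the same cone (where Remark~\ref{ast} with a single two-element set suffices) versus different cones, and in the latter it removes the extremal ray $\bfv_M$ (respectively $\bfv_1=-\bfv_M$) from $\cH$, applies Proposition~\ref{aux} to the resulting strongly convex sub-fan, treats the pair involving $\bfv_M$ (resp. $\bfv_1$) by hand via a divisibility by $\prod_{\ell\geq i}p_{k_\ell}$ (resp. $\prod_{\ell\leq i'-1}p_{k_\ell}$), and only then collapses to a two-generator ideal. You would need to reproduce that analysis to complete the proof for half-planes.
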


\begin{proof}
Note that the condition in the hypothesis also ensure that no $p_{k_i}$ is a unit, $1\leq i\leq \ell-1.$ 
Suppose first that $|\cF|$ is strongly rational convex, and that we have a pair of non-adjacent vectors $\bfv,\,\bfv'\in\cH$ such that
$\bfS_{\bfv,\bfv'}$ is in the ideal generated by all the others elements in $\bfS_{\bff,\cF, \bfp},$ i.e. 
\begin{equation}\label{ttt}
Y_\bfv Y_{\bfv'}-Y_{\bfw}^\alpha Y_{\bfw'}^{\alpha'}\bfp^{\bfgamma_{\bfv\bfv'}}=
\sum_{\bfz,\bfz'} Q_{\bfz,\bfz'}(\bfY)\big(Y_{\bfz}Y_{\bfz'}-Y_{\bfu}^{\alpha_{\bfz\bfz'}}Y_{\bfu'}^{\alpha'_{\bfz\bfz'}}\bfp^{\gamma_{\bfz\bfz'}}\big),
\end{equation}
the sum being over all the pairs $\bfz, \bfz'$ of non-adjacent vectors in $\cH$ except $\bfv, \bfv'.$  If we set all the variables $Y_{\bfz}$ to zero except $Y_{\bfv}$ and $Y_{\bfv'}$  in \eqref{ttt}, we get
\begin{equation}\label{abb}
Y_\bfv Y_{\bfv'}=
-\sum_{\bfz,\bfz'} Q_{\bfz,\bfz'}(0,\ldots, Y_\bfv, Y_{\bfv'}, 0,\ldots, 0)Y_{\bfu}^{\alpha_{\bfz\bfz'}}Y_{\bfu'}^{\alpha'_{\bfz\bfz'}}\bfp^{\gamma_{\bfz\bfz'}},
\end{equation}
where the sum to the right is now indexed over all the pairs of no adjacent $\bfz,\bfz'\in\cH$ such that either
\begin{equation}\label{manolito}
\bfz+\bfz' =\bfv\ \ \mbox{or} \ \ \bfz+\bfz'= \bfv'.
\end{equation} This is due to the fact that no monomial among those of the form $Y_{\bfu}^{\alpha_{\bfz\bfz'}}Y_{\bfu'}^{\alpha'_{\bfz\bfz'}}$ appearing in \eqref{ttt} can be a multiple of $Y_\bfv Y_{\bfv'}$ (as $\bfv$ and $\bfv'$ are no adjacent), but we may have 
\begin{equation}\label{unoo}
\bfz+\bfz'=\alpha_{\bfz\bfz'}\bfu+\alpha'_{\bfz\bfz'}\bfu',
\end{equation}
with $\alpha_{\bfz\bfz'}=1$ (resp. $0$) and $\alpha'_{\bfz\bfz'}=0$ (resp. $1$), and $\bfu\in\{\bfv,\,\bfv'\}$ or $\bfu'\in\{\bfv,\,\bfv'\},$ which implies \eqref{manolito}.

\par By Proposition \ref{aux}, all the $\bfp^{\gamma_{\bfz\bfz'}}$ are multiples of some $p_{k_i}$  (resp. $p_{k_j}$) for those $\bfz+\bfz'=\bfv$
(resp. $\bfz+\bfz'=\bfv'$). So, the right hand side of \eqref{abb} belongs to the ideal $\langle p_{k_i},\,p_{k_j}\rangle\neq\rmR[\bfY].$ By extracting the coefficient of $Y_\bfv Y_{\bfv'}$ on the right hand side of \eqref{abb}, we get then an element in $\langle p_{k_i},\,p_{k_j}\rangle\neq\rmR,$ so the equality in \eqref{abb} cannot hold. This concludes the proof of the Theorem for the case $|\cF|$ being strongly rational.
\par For $|\cF|$ being a closed half-plane, we argue as follows: Recall that we have $\cH=\{\bfv_1,\ldots, \bfv_M\},$ where the sequence is sorted clockwise. So, in this case, we have that $\bfv_1=-\bfv_M.$  
Assume again that we have a situation like \eqref{ttt}. If both $\bfv$ and $\bfv'$ lie in the same cone $C_i,$ then due to Remark \ref{ast}, we will  conclude straightforwardly that 
 all the $\bfp^{\gamma_{\bfz\bfz'}}$  in \eqref{abb} are multiples of either $p_{k_{i-1}}$  or  $p_{k_i}.$ From here, the claim follows straightforwardly.
 \par
 Suppose now that $\bfv\in C_i$ and $\bfv'\in C_{i'}$ with $i<i'.$  If we remove $\bfv_M$ from $\cH$ and apply Proposition \ref{aux} to the subfan generated by $\cH\setminus\{\bfv_M\},$ we would get that all the elements in $D_\bfv$ except possibly the  pair $(\bfv_M, \bfz)$ have an index $k^*_i$ such that $f_{k^*_i}$ separates them.  Due to Remark \ref{ast}, we deduce that we can choose $k^*_i\in\{k_{i-1}, k_i\}.$  
 
 If in addition the pair $(\bfv_M,\bfz)$ belongs to $D_\bfv,$ then we must have  with $\bfv_M+\bfz=\bfv,$ which implies that $\bfz$ is ``to the left'' of $\bfv.$ So,   the exponent $\bfp^{\gamma_{\bfv_M\bfz}}$ will be a multiple of the product of all the $p_{k_j}\, j\geq i.$ 
So, we have that all the  $\bfp^{\gamma_{\bfz\bfz'}}$ coming from \eqref{abb}  with $\bfz+\bfz'=\bfv,$ belong to the ideal $\langle p_{k^*_i},\,\prod_{\ell\geq i} p_{k_\ell}\rangle.$ 

Reasoning symmetrically with $\bfv'$ as before (we remove now $\bfv_1=-\bfv_M$ from $\cH$), we conclude that all the elements in $D_{\bfv'}$ except possibly $(\bfz,\bfv_1)$ induce an element   $\bfp^{\gamma_{\bfz\bfz'}}$  which is multiple of only one between $p_{k_{i'-1}},\,p_{k_{i'}},$ which we denote with $p_{k^*_{i'}},$  and the remaining power  $\bfp^{\gamma_{\bfv_1\bfz}}$ must be a multiple of $\prod_{\ell\leq i'-1}p_{k_\ell}.$ So,  all the  $\bfp^{\gamma_{\bfz\bfz'}}$ coming from \eqref{abb}  with $\bfz+\bfz'=\bfv',$ belong to the ideal $\langle \prod_{\ell\leq i'-1}p_{k_\ell}, p_{k^*_{i'}}\rangle.$ 
\par
As $\langle p_{k^*_i},\,\prod_{\ell\geq i} p_{k_\ell}\rangle\cap\langle \prod_{\ell\leq i'-1}p_{k_\ell}, p_{k^*_{i'}}\rangle\subset\langle p_{k^*_i}, p_{k^*_{i'}}\rangle,$
we can argue as before and conclude with the proof.
\end{proof}

The hypothesis on $\langle p_i,\,p_j\rangle\neq \rmR$ cannot be avoided. See Example \ref{torito}. We illustrate our results with some examples.

\begin{example}\label{torito}
Let $n=2,\, p_1,\,p_2\in \rmR$ not  zero divisors, and $\cF$ being the fan whose maximal cones are the following:

\begin{itemize}
\item $C_1$ being the angular region determined by $(0,1)$ and $(1,3);$
\item $C_2$ being the angular region determined by $(1,3)$ and $(3,1);$
\item $C_3$ being the angular region determined by $(3,1)$ and $(1,0)$.
\end{itemize}
The set $\cH$ for this case  is the following:
$$\cH=\{\bfv_1:=(0,1),\,\bfv_2:=(1,3),\,\bfv_3:=(1,2),\,\bfv_4:=(1,1),\,\bfv_5:=(2,1),\,\bfv_6:=(3, 1), \,\bfv_7:=(1,0)\}.$$
We choose as fan-functions 
$$\begin{array}{lll}
f_1(v_1,v_2)&:=&\max\{3v_1,\,v_2\}\\
f_2(v_1, v_2)&:=&\max\{v_1,\,3v_2\}.
\end{array}
$$
Note that the family $\bff$ is strict.
The map  $\varphi_0$ is described by:
$$\begin{array}{cclccclccclcclc}
Y_{1} &\to&p_1 p_2^3 x_2  & &
Y_{2}&\to&p_1^3 p_2^9 x_1 x_2^3  &&
Y_{3}& \to& p_1^3p_2^6 x_1 x_2^2 &&
Y_{4}&\to&p_1^3p_2^3 x_1 x_2   \\
Y_{5}& \to&p_1^6 p_2^3  x_1^2 x_2 & &
Y_{6}&\to&p_1^9p_2^3 x_1^3 x_2  &&
 Y_{7}&\to&p_1^3 p_2 x_1. & & & & 
\end{array}$$
Theorem \ref{mt} states that $\bfS_{\bff,\cF,\bfp},$ the minimal Gr\"obner basis of this ideal has $15 =\frac{6\times5}{2}$ elements whose leading terms must be the following  $15$ monomials:
$$
\left\{Y_{1} Y_{7} ,\,
Y_{1} Y_{4},\, Y_{7} Y_{3},\,
Y_{1} Y_{3} ,\, 
 Y_{7} Y_{2},\, Y_{4} Y_{2},\,
Y_{7} Y_{4} ,\,  Y_{1} Y_{5},Y_{3} Y_{5}, \, Y_{2} Y_{5},\, 
Y_{7} Y_{5}, \,   Y_{1} Y_{6},  Y_{4} Y_{6},\,   Y_{3} Y_{6}, Y_{2} Y_{6}\right\}.
$$
By computing explicitly this Gr\"obner basis with {\tt Mathematica}, we get the following set:
$$\begin{array}{l}
\left\{Y_{1} Y_{7} - p_1p_2Y_{4},\, 
Y_{1} Y_{4} - p_2 Y_{3},\,  Y_{7} Y_{3}-p_1Y_{4}^2,\,
Y_{1} Y_{3} - p_2 Y_{2},\,  
 Y_{7} Y_{2}-p_1 Y_{4} Y_{3} ,\, Y_{4} Y_{2}-Y_{3}^2, \right.\\ 
Y_{7} Y_{4} - p_1 Y_{5},\, Y_{1} Y_{5}-p_2 Y_{4}^2 ,\, Y_{3} Y_{5}-Y_{4}^3 ,  
 Y_{2} Y_{5} -Y_{4}^2 Y_{3},\, 
Y_{7} Y_{5} - p_1 Y_{6} ,\,   Y_{1} Y_{6}-p_2 Y_{4} Y_{5} ,  \\ \left. 
Y_{4} Y_{6}-Y_{5}^2 ,
\, Y_{3} Y_{6}-Y_{4}^2 Y_{5},  Y_{2} Y_{6}-Y_{4}^4 \right\},
\end{array}
$$
which can easily be seen to be equal to $\bfS_{\bff,\cF, \bfp}.$ If  the ideal $\langle p_1,\,p_2\rangle$ is not the whole ring, then $\bfS_{\bff,\cF, \bfp}$ is also a minimal set of generators of $\ker(\varphi_0),$ and the presentation of $\ker(\varphi_0)$ given in \eqref{map} is minimal. 
\par Suppose now that we have $p_1=p_2+1.$ We then have that $\langle p_1,p_2\rangle=\rmR$, and we easily verify that
$$
Y_{5}Y_{3}-Y_{4}^3=Y_{4}(Y_{7}Y_{3}-p_1Y_{4}^2)-Y_{1,2}(Y_{7}Y_{4}-p_1Y_{5}) -Y_{4}(Y_{1}Y_{5}-p_2Y_{4}^2)+Y_{5}(Y_{1}Y_{4}-p_2Y_{3}),
$$
i.e. the system of generators is not minimal.
\par If we set now $p_2=1$ and impose no further conditions on $p_1$ except that it is not a zero divisor, we can verify straightforwardly that
$$Y_{5}Y_{3}-Y_{4}^3=Y_{4}(Y_{1}Y_{5}-Y_{4}^2)-Y_{5}(Y_{1}Y_{4}-Y_{3}),
$$
i.e. the presentation \eqref{map} is not minimal.
\end{example}
\begin{example}
Let $a,b$ be positive integers, and define $\cF$ as the fan whose maximal cones are
\begin{itemize}
\item $C_1$ being the convex hull of the rays $\R_{\geq0}\cdot (0,1)$ and $\R_{\geq0}\cdot (ab+1,a);$
\item $C_2$ being the convex hull of the rays $\R_{\geq0}\cdot (ab+1,a)$ and $\R_{\geq0}\cdot (1,0).$
\end{itemize}
Set also $p_1=x_1.$ The case $a=1$ and $b=1$ have been studied in \cite{mal13b}. The set $\cH$ in this case, sorted already in the lexicographic order, has the following $a+b+2$ elements:
$$\cH=\{(0,1),(1,1),\,(2,1),\,\ldots,(b,1),\,(ab+1,a),\,((a-1)b+1,a-1),\,\ldots,\,(b,1)(1,0)\}.
$$
Theorem \ref{mt} above states that $\bfS_{\bff,\cF,\bfp}$ has $\frac{(a+b+1)(a+b)}{2}$ elements.
\par
If $a=1,$ we recover Theorem $3.2.1$ in \cite{mal13b}, as it is easy to see  that $\bfS_{\bff,\cF,x_1}$ is the ideal generated by the $2\times2$ minors of  the  $2\times(b+2)$ matrix
$$\left(\begin{array}{llllll}
Y_{(0,1)}&Y_{(1,1)}&\ldots & Y_{(j,1)}&\ldots & Y_{(b+1,1)}\\
Y_{(1,1)}& Y_{(2,1)}&\ldots & Y_{(j+1,1)}& \ldots& Y_{(1,0)}
\end{array}
\right).$$ 
Note that the number of $2\times2$ minors of this matrix is equal to $\frac{(b+2)(b+1)}{2},$ which is the number of elements in the minimal set of generators. However, we do not produce the same set but it is very easy to see that the ideal generated is the same.

\par The other extremal case of this situation -also treated in \cite{mal13b}- was when  $b=1.$ Thanks to Theorem \ref{mt} we can show that Conjecture $3.2.3$ in \cite{mal13b} does not hold.
\par Indeed, we have in this case $a_1=a,\,b_1=a+1.$ So, the cones here are the following:
$$\begin{array}{lll}
C_1&=&\langle (0,1), (a+1,a)\rangle\\
C_2&=&\langle (a+1,a),\, (1,0)\rangle;
\end{array}
$$
and then we have
$$\begin{array}{lll}
\cH_{Q_0}&=&\{ (0,1), (1,1),\, (a+1,a)\}\\
\cH_{Q_1}&=&\{ (a+1,a),\, (a, a-1),\,\ldots, (2,1),\,(1,0)\}.
\end{array}
$$
The cardinality of $\cH$ is then $a+3.$ Theorem \ref{mt} gives then a list of (minimal) generators of $\cB_{\bff, \cF, x_1}.$ Denote $a=n \geq 2$ to faciliate the comparison to Conjecture $3.2.3$ in \cite{mal13b}. One of them is $Y_{(0,1)}Y_{(2,1)}-Y_{(1,2)}^2x_1^{n-1}.$ We claim that this element does not belong to the ideal proposed in \cite[Conjecture 3.2.3]{mal13b}. Indeed, with the notation there, this element translates as $x_1x_4-x_3^2x_{n+4}^{n-1},$ which can be easily seen to be part of the kernel of $\varphi_0.$ But if we try to write it as a polynomial combination of the prospective generators of the ideal, and then set all of the variables equal to zero except $x_1$ and $x_4,$ we would have
$$x_1x_4=p(x_1,x_4)\cdot x_1x_4^n+q(x_1,x_4)\cdot x_4^{2n},
$$
which is impossible.
 \end{example}

\begin{example}
\label{phi0}
Let us produce a presentation for $\cB_{\rmR, p}(3,2)= \sum_{r,s} (p^{\max(3r,2s)})x_1^rx_2^s \subset \rmR[x_1,x_2]$, where  $p \in \rmR$ is a nonzerodivisor, see Remark~\ref{ff}, (4). As in Example~\ref{hb23}, $\cH = \{\bfv_1:=(0,1),\,\bfv_2:=(1,2),\, \bfv_3:=(2,3),\, \bfv_4:=(1,1),\,\bfv_5:=(1,0) \}$ and $M=5$. So, $\cB_{\rmR, p}(3,2) = \rmR[p^3x_1, p^2x_2, p^4x_1x_2^2, p^3x_1x_2, p^6x_1^2x_2^3]$. The map $$\varphi_0: \rmR[\bfY] \to \cB$$ sends 
$$Y_1 \to p^2x_2, \ Y_2 \to p^4x_1x_2^2,  \ Y_3 \to p^6 x_1^2x_2^3,$$
$$Y_4 \to p^3x_1x_2, \ Y_5 \to p^3x_1.$$
Recall that our lexicographic order is of the form $Y_1\prec Y_2\prec Y_3\prec Y_4\prec Y_5$. We have $\binom{5-1}{2} = 6$ relations in the kernel of this map.  They are $$S_{1,3}=Y_1Y_3-Y_2^2, \ S_{1,4}=Y_1Y_4-pY_2, \ S_{1,5}=Y_1Y_5-p^2Y_4$$
$$S_{2,4}=Y_2Y_4-pY_3, \ S_{2,5}=Y_2Y_5-pY^2_4, \ S_{3,5}=Y_3Y_5-Y_4^3.$$ In this example, we have $\bfv_1+\bfv_5=\bfv_4$ and
$$\max(3,0) +\max(0,2) -\max(3,2) = 5-3=2,$$ which gives the relation $Y_1Y_5-p^2Y_4$. We denote this relation $S_{1,5}$ instead of $S_{\bfv_1, \bfv_5}$. The rest are produced similarly. Therefore, $\cB_{\rmR, p}(3,2) = \rmR[Y_1, \ldots, Y_5]/I$, where $$I=\langle Y_1Y_3-Y_2^2, Y_1Y_4-pY_2, Y_1Y_5-p^2Y_4, Y_2Y_4-pY_3, Y_2Y_5-pY^2_4, Y_3Y_5-Y_4^3\rangle.$$

\end{example}

\bigskip
\section{The map $\varphi_1$}\label{varpi1}
With the notation and terminology of Section \ref{background},  we are going to study the syzygy module $\mbox{\rm syz}\big(\bfS_{\bff,\cF,\bfp}\big)\subset \rmR[\bfY]^{M-1\choose 2},$ where  $R[\bfY]^{M-1\choose 2}$ is the free module of rank ${M-1\choose 2}$ over $\rmR[\bfY]$ with basis $\{e_{(\bfv,\,\bfv')},\,\bfv\prec\bfv'\in\cH,\,\mbox{non-adjacents}\}.$ So, we have the map
\begin{equation}\label{phi1}
\begin{array}{ccccc}
\mbox{\rm syz}\big(\bfS_{\bff,\cF, \bfp}\big)& \subset &\rmR[\bfY]^{M-1\choose 2}&\stackrel{\varphi_1}{\to}& \rmR[\bfY]\\
& & e_{(\bfv,\,\bfv')}&\mapsto & \bfS_{\bfv,\,\bfv'}.
\end{array}
\end{equation}
This syzygy submodule is also $\N^2$-homogeneous if we declare
\begin{equation}\label{tocho}
\deg(e_{(\bfv,\,\bfv')}):=\bfv+\bfv'.
\end{equation}

For $\bfv,\bfv',\,\bfw,\,\bfw'\in\cH$ such that
neither $\bfv,\,\bfv'$ nor $\bfw,\,\bfw'$ are adjacents, we denote with
$\bfs_{\bfv,\bfv',\bfw,\bfw'}$ the $S$-polynomial between $\bfS_{\bfv,\bfv'}$ and $\bfS_{\bfw,\bfw'}.$
As $\bfs_{\bfv,\bfv',\bfw,\bfw'}\in\ker(\varphi_0),$ the division algorithm of this polynomial against $\bfS_{\bff,\cF, \bfp}$ gives
\begin{equation}\label{preraw}
\bfs_{\bfv,\bfv',\bfw,\bfw'}=\sum_{\bfz,\bfz'}q_{\bfz,\bfz'}(\bfY)\bfS_{\bfz,\bfz'},
\end{equation}
with $q_{\bfz,\bfz'}(\bfY)\in\rmR[\bfY],$ the sum being over all pair of non-adjacent $\bfz\prec\bfz'\in\cH.$
Let $\bfY^\bfalpha,\,\bfY^{\bfalpha'}$ be such  that
$\bfY^\bfalpha\bfS_{\bfv,\bfv'}-\bfY^{\bfalpha'}\bfS_{\bfw,\bfw'}=\bfs_{\bfv,\bfv',\bfw,\bfw'}.$ We then easily have that
\begin{equation}\label{oto}
\cS_{\bfv,\bfv',\bfw,\bfw'}:=\bfY^\bfalpha e_{(\bfv,\bfv')}-\bfY^{\bfalpha'}e_{(\bfw,\bfw')}-\sum_{\bfz,\bfz'}q_{\bfz,\bfz'}(\bfY)e_{(\bfz,\bfz')} \in \mbox{\rm syz}\big(\bfS_{\bff,\cF, \bfp}\big).
\end{equation}
Note that \begin{equation}\label{otor}\bfY^\bfalpha\in\{Y_\bfw,\, Y_{\bfw'},\,Y_{\bfw}Y_{\bfw'}\},\end{equation} and similarly  $\bfY^{\bfalpha'}\in\{Y_\bfv,\, Y_{\bfv'},\,Y_{\bfv}Y_{\bfv'}\}.$ So, we have that
$$\deg\left(\cS_{\bfv,\bfv',\bfw,\bfw'}\right)=\left\{\begin{array}{ccc}
\bfv+\bfv'+\bfw+\bfw'& \mbox{if} & \{\bfv,\,\bfv'\}\cap\{\bfw,\,\bfw'\}=\emptyset\\
\bfv+\bfv'+\bfw& \mbox{if} & \bfw'\in\{\bfv,\,\bfv'\}\\
\bfv+\bfv'+\bfw'& \mbox{if} & \bfw\in\{\bfv,\,\bfv'\}.
\end{array}
\right.
$$
The following result will be useful in the sequel. It follows straightforwardly by noticing the structure of the polynomials $\bfS_{\bfv,\,\bfv'}$ defined in \eqref{S}.
\begin{lemma}\label{tool}
Write $\cS_{\bfv,\bfv',\bfw,\bfw'}$ as in \eqref{oto}. Then, all the vectors $\bfz,\,\bfz'$ appearing in the expansion of this syzygy belong to the cone generated by $\{\bfv,\bfv',\bfw,\bfw'\}.$ If $\bfv=\bfw$ and $\bfv\prec\bfv',\,\bfw\prec\bfw',$ then the pairs $\bfz,\,\bfz'$ in \eqref{oto} actually belong to the cone generated by $\{\bfz,\bfv',\bfw'\},$ with $\bfv\prec\bfz$ and $\bfv,\,\bfz$ adjacent.
\end{lemma}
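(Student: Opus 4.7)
My strategy is to track how $\Z^2$-degrees and cone memberships of the indices evolve through the reductions in \eqref{preraw}. The key invariant I will exploit is that a single reduction by $\bfS_{\bff,\cF,\bfp}$ replaces a factor $Y_\bfa Y_\bfb$ (with $\bfa,\bfb$ non-adjacent) by $Y_\bfu^\alpha Y_{\bfu'}^{\alpha'}\bfp^\gamma$, where $\bfu,\bfu'\in\cH$ are adjacent and $\alpha\bfu+\alpha'\bfu'=\bfa+\bfb$ by \eqref{S} and \eqref{key}; thus $\bfu,\bfu'$ lie in the closed cone spanned by $\{\bfa,\bfb\}$.

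For the first assertion, I will expand $\bfs_{\bfv,\bfv',\bfw,\bfw'}=\bfY^\alpha\bfS_{\bfv,\bfv'}-\bfY^{\alpha'}\bfS_{\bfw,\bfw'}$ using \eqref{otor}, so the initial indices come from $\{\bfv,\bfv',\bfw,\bfw'\}$ together with the adjacent pairs produced by Proposition~\ref{curry} applied to $\bfS_{\bfv,\bfv'}$ and $\bfS_{\bfw,\bfw'}$, which lie in $\mathrm{cone}(\bfv,\bfv')$ and $\mathrm{cone}(\bfw,\bfw')$ respectively. A straightforward induction on the number of reduction steps, together with the invariant above, then yields the first claim.

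For the second assertion, let $\bfz\in\cH$ be the vector immediately after $\bfv$ in the clockwise order. By Proposition~\ref{criteria}(1) and Corollary~\ref{corola}, $\det(\bfz,\bfv)=1$, so $\{\bfv,\bfz\}$ is a $\Z$-basis of $\Z^2$. After cancellation of the leading terms, the S-polynomial becomes
\begin{equation*}
\bfs_{\bfv,\bfv',\bfv,\bfw'}=-Y_{\bfw'}Y_{\bfu_1}^{\alpha_1}Y_{\bfu_1'}^{\alpha_1'}\bfp^{\bfgamma_{\bfv\bfv'}}+Y_{\bfv'}Y_{\bfu_2}^{\alpha_2}Y_{\bfu_2'}^{\alpha_2'}\bfp^{\bfgamma_{\bfv\bfw'}},
\end{equation*}
and the key point is that $\bfv$ occurs with zero exponent in each monomial. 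Writing $\bfv'=a\bfv+b\bfz$ with $a,b\in\Z$, primitivity of $\bfv'\in\cH$ together with the non-adjacency of $\bfv,\bfv'$ forces $a\leq -1$ and $b\geq 1$ (otherwise $\bfv'$ would sit inside $\mathrm{cone}(\bfv,\bfz)$, contradicting minimality of the Hilbert basis). When $a\leq -2$, the vector $\bfv+\bfv'=(a+1)\bfv+b\bfz$ lies strictly past the ray through $\bfz$, so its bracketing pair $\bfu_1,\bfu_1'$ lies entirely in the half-cone $\{\bullet\succeq\bfz\}$. When $a=-1$, $\bfv+\bfv'=b\bfz$ lies on the ray through $\bfz$ and the uniqueness of the expression in Proposition~\ref{curry} forces $\alpha_1=0$. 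The same analysis applies to $\bfu_2,\bfu_2'$ with $\bfw'$ in place of $\bfv'$.

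Consequently, every initial index of $\bfs_{\bfv,\bfv',\bfv,\bfw'}$ satisfies $\succeq\bfz$ and $\preceq\max_\prec(\bfv',\bfw')$, placing it in $\mathrm{cone}(\bfz,\bfv',\bfw')$. The same primitivity argument rules out any reduction on non-adjacent $\bfa,\bfb\succeq\bfz$ producing a vector equal to $\bfv$, so the inductive step from the first part preserves membership in $\mathrm{cone}(\bfz,\bfv',\bfw')$. The main obstacle I anticipate is the case analysis excluding $\bfv$ as an initial index of the S-polynomial when $\bfv=\bfw$; once that is in place, the remainder is a routine induction on reduction steps.
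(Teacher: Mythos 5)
Your proof is correct, and since the paper itself gives no argument for this lemma (it is stated as following ``straightforwardly by noticing the structure of the polynomials $\bfS_{\bfv,\bfv'}$''), your careful tracking of how the Division Algorithm replaces a non-adjacent product $Y_\bfa Y_\bfb$ by $Y_\bfu^\alpha Y_{\bfu'}^{\alpha'}\bfp^\gamma$ with $\alpha\bfu+\alpha'\bfu'=\bfa+\bfb$ is exactly the intended filling-in of that remark. One small imprecision: the parenthetical appeal to primitivity and minimality of the Hilbert basis only excludes $a,b\geq 0$; to pin down $a\leq -1$ and $b\geq 1$ (rather than, say, $b<0$) you should also invoke the clockwise ordering via $\det(\bfv',\bfv)=b>0$ and $\det(\bfv',\bfz)=-a>0$, as in Proposition~\ref{criteria}(1).
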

 We will apply the results of Section \ref{background} to this situation. To do this, we need to sort the elements of the set $\bfS_{\bff,\cF,\bfp}.$  As they are indexed by  pairs  of non-adjacent  $\{\bfv_i,\bfv_j\}$ with $1\leq i<j+1\leq N,$ we will sort them by using the standard lexicographic order in $\N^2,$ i.e. $(i,j)\prec (i',j')$ if and only if $i<i',$ or $i=i'$ and $j<j'.$

 Theorem \ref{3713} implies then that the collection $\{\cS_{\bfv_i,\bfv_j,\bfv_{i'},\bfv_{j'}}\}$ over all pairs  of non-adjacent  $(i,j)\prec(i',j'),$ generates $\mbox{\rm syz}\big(\bfS_{\bff,\cF, \bfp}\big)$ as an $\rmR[\bfY]$-module, and it is a Gr\"obner basis of this submodule for the induced order by this set, which will be denoted by $\prec_\bfS.$  Note that this induced order depends on the way we have sorted the elements of $\bfS_{\bff,\cF,\bfp}$ (the lexicographic order above) but it is not any kind of lexicographic order in  $\rmR[\bfY]^{M-1\choose 2}.$ 
 To simplify notation, we will also denote with $e_{(i, j)}$ the element $e_{(\bfv_i,\,\bfv_j)}.$ Recall that we will also denote with $Y_j$ the variable $Y_{\bfv_j}.$

\begin{definition}
For $i,\,j\in\{1,\ldots, M\}$ with $i+1<j,$ and $k\in\{i+1,i+2,\ldots, M\}\setminus\{j-1,j\},$ we define \begin{equation}\label{sisi}
\cs_{i,j,k}=\left\{\begin{array}{cl}
{\cS}_{\bfv_i,\,\bfv_j,\,\bfv_i,\,\bfv_k}& \mbox{if} \ i<j<k \\
{\cS}_{\bfv_i,\,\bfv_j,\,\bfv_k,\,\bfv_j}& \mbox{if} \ i<k<j.
\end{array}
\right.
\end{equation}
\end{definition}

A triplet $(i,j,k)$ for which $\cs_{i,j,k}$ is defined will be called ``admissible''. For such an admissible triplet, by computing explicitly $\cs_{i,j,k},$ we get that it is equal to either
\begin{equation}\label{mamn}
Y_{{k}}e_{(i,\,j)}-Y_{{j}}e_{(i,\,k)}+\sum_{u<u'-1}q_{i,j,k,u,u'}(\bfY)e_{(u,\,u')},\ \mbox{or} \ \
Y_{{k}}e_{(i,\,j)}-Y_{{i}}e_{(k,\,j)}+\sum_{u<u'-1}q_{i,j,k,u,u'}(\bfY)e_{(u,\,u')},
\end{equation}
with 
\begin{equation}\label{osit}
i\leq u<u'\leq\max\{j,k\}.
\end{equation}

\begin{lemma}\label{au1}
With notation as above, if a  term of the form $rY_{t}$ with $r\in\rmR\setminus\{0\}$ appears in $q_{i,j,k,u,u'}(\bfY),$ then the triplet $(u,u',t)$ is not admissible.
\end{lemma}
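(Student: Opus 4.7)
The plan is to trace the division algorithm that expresses $\bfs_{i,j,i,k}$ (when $i<j<k$) or $\bfs_{i,j,k,j}$ (when $i<k<j$) in terms of the Gr\"obner basis $\bfS_{\bff,\cF,\bfp}$, and to identify the steps that contribute linear terms to $q_{i,j,k,u,u'}$. A term of the form $rY_t$ with $r\neq 0$ can accumulate in $q_{i,j,k,u,u'}$ only from a reduction step whose intermediate leading monomial has shape $cY_tY_uY_{u'}$ (allowing the degeneracies $Y_u^2Y_{u'}$ or $Y_uY_{u'}^2$ when $t=u$ or $t=u'$), and in which the chosen divisor is $\bfS_{\bfv_u,\bfv_{u'}}$: indeed, dividing this monomial by $\mbox{\rm lm}(\bfS_{\bfv_u,\bfv_{u'}})=Y_uY_{u'}$ leaves precisely $Y_t$, whereas intermediate leading monomials of other $Y$-degrees yield quotients of other degrees that cannot combine into a linear term.

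By the selection rule of Remark \ref{usare}, for $\bfS_{\bfv_u,\bfv_{u'}}$ to be the chosen divisor, $(u,u')$ must be maximal in our ordering of pairs (lex on $(i,j)$: largest first coordinate, then largest second coordinate) among non-adjacent pairs $(u^*,u^{*\prime})$ with $Y_{u^*}Y_{u^{*\prime}}\mid Y_tY_uY_{u'}$. The candidate divisor pairs are $(u,u')$ itself together with $(t,u)$ and $(t,u')$, restricted to those that are non-adjacent. I would then carry out a case analysis on the position of $t$ relative to $u$ and $u'$: if $t\in\{u,u'\}$, then $(u,u')$ is the only applicable divisor, is chosen, and $(u,u',t)$ fails admissibility by definition; if $t<u$, all applicable non-adjacent pairs have first coordinate at most $u$, so $(u,u')$ is chosen and $t<u$ rules out admissibility; if $u<t<u'-1$, the pair $(t,u')$ is non-adjacent with first coordinate $t>u$, so $\bfS_{\bfv_t,\bfv_{u'}}$ is selected instead of $\bfS_{\bfv_u,\bfv_{u'}}$, yielding no contribution; if $t=u'-1$, the pair $(t,u')$ is adjacent, so $(u,u')$ is chosen and $t=u'-1$ fails admissibility; if $t=u'+1$, the pair $(u,u'+1)$ is non-adjacent and dominates $(u,u')$ in the second coordinate, so $\bfS_{\bfv_u,\bfv_{u'+1}}$ is chosen; if $t>u'+1$, the pair $(u',t)$ is non-adjacent with first coordinate $u'>u$, so $\bfS_{\bfv_{u'},\bfv_t}$ is chosen.

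Combining cases, whenever a reduction step actually uses $\bfS_{\bfv_u,\bfv_{u'}}$ and produces a linear quotient $cY_t$, the triplet $(u,u',t)$ is not admissible; since every step contributing a $Y_t$-quotient to $q_{i,j,k,u,u'}$ shares the same triplet, a nonzero total coefficient $r$ of $Y_t$ in $q_{i,j,k,u,u'}$ corresponds to a non-admissible triplet, which is exactly the claim. The main subtlety lies in the boundary configurations where $t$ is adjacent to $u$ or $u'$ (that is, $t\in\{u-1,u+1,u'-1,u'+1\}$), since there one of the competing pairs becomes adjacent and hence inapplicable as a divisor; but Remark \ref{usare}'s selection rule unambiguously determines the chosen divisor in each subcase, and so the case analysis remains clean.
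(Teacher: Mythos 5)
Your argument is correct and follows essentially the same route as the paper: both trace the division algorithm to a single step with intermediate leading monomial $rY_tY_uY_{u'}$, and both invoke the priority rule from Remark \ref{usare} (pick the lexicographically largest index pair whose leading term divides) to show that $\bfS_{\bfv_u,\bfv_{u'}}$ would lose to $\bfS_{\bfv_t,\bfv_{u'}}$ (or $\bfS_{\bfv_{u'},\bfv_t}$, or $\bfS_{\bfv_u,\bfv_{u'+1}}$ when $t=u'+1$) whenever $(u,u',t)$ is admissible. The only difference is presentational: the paper argues the contrapositive directly under the admissibility hypothesis, while you run an exhaustive forward case split on $t$ (which additionally handles the already non-admissible positions $t\leq u$, $t\in\{u'-1,u'\}$, not strictly needed for the claim); both are sound.
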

\begin{proof}
The presence of such a term implies that, in the process of doing the division algorithm \eqref{preraw}, we would have ran into a nonzero term of the form $rY_{t}Y_{u}Y_{{u'}}.$ From here, to produce the element $rY_{t}e_{(u,u')},$ we should made the division of this term with the binomial $S_{\bfv_u,\bfv_{u'}}$ whose leading term is $Y_{u}Y_{{u'}}.$ But if the triplet $(u,u',t)$ is admissible, we would have that $u<u',\,u<t,$ and $t\in\{u+1,u+2,\ldots, M\}\setminus\{u'-1,u'\}.$  Due to the way we have indexed the set $\bfS_{\bff,\cF,\bfp}$ above, and the fact that our Division Algorithm defined in Remark \ref{usare} chooses the maximal index among those dividing the leading term, we have that  the division of this term must be done by $S_{\bfv_{u'},\bfv_t},$ except in the case when $t=u'+1$ (as $t=u'-1$ cannot happen due to our choice of $t$), in which case the division should be done by $S_{\bfv_u,\bfv_t}.$ In none of the cases we get a term with coordinate  $e_{(u,u')}.$ This concludes the proof of the claim.
\end{proof}

\begin{lemma}\label{au2}
For an admissible triplet $(i,j,k),$ if  $\bfv_i,\,\bfv_j$ and $\bfv_k$ are all contained in one single cone of the fan, then $\cs_{i,j,k}\in \langle Y_{1},\ldots, Y_{M}\rangle\mbox{\rm syz}\big(\bfS_{\bff,\cF, \bfp}\big).$
\end{lemma}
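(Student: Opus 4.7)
The plan is to prove Lemma \ref{au2} by showing that each component of $\cs_{i,j,k},$ viewed in the free module $\rmR[\bfY]^{M-1\choose 2},$ lies in the ideal $\langle Y_1,\ldots, Y_M\rangle.$ In the explicit expression \eqref{mamn}, the two displayed coefficients ($Y_k$ and $-Y_j$ in the case $i<j<k,$ or $Y_k$ and $-Y_i$ in the case $i<k<j$) manifestly belong to this ideal, so the real content is to control the coefficients $q_{i,j,k,u,u'}(\bfY)$ produced by the division algorithm and to check they carry no nonzero constant term.

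The key input I would use is that when all three of $\bfv_i,\bfv_j,\bfv_k$ lie inside a single maximal cone $C$ of $\cF,$ every fan linear function $f_u$ restricts to a linear function on $C,$ so by \eqref{linear} the defect vector $\bfgamma_{\bfv,\bfv'}$ from \eqref{nueve} vanishes for every pair $\bfv,\bfv'\in\cH\cap C.$ Hence each relevant $\bfS_{\bfv,\bfv'}$ is a pure binomial $Y_\bfv Y_{\bfv'}-Y_\bfw^\alpha Y_{\bfw'}^{\alpha'}$ with no $\bfp$-factor. Moreover, minimality of the Hilbert basis of $C$ (Proposition \ref{criteria}) forces $\alpha+\alpha'\geq 2$: if instead $\alpha+\alpha'=1,$ then $\bfv+\bfv'$ would equal one of $\bfw,\bfw',$ making that Hilbert basis element a sum of two others and contradicting minimality. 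Lemma \ref{tool} then confines every vector arising during the reduction \eqref{preraw} of the S-polynomial $\bfs_{\bfv_i,\bfv_j,\bfv_i,\bfv_k}$ (or its analog for $i<k<j$) to the cone generated by $\{\bfv_i,\bfv_j,\bfv_k\}\subset C,$ so every Gr\"obner basis element actually used in the reductions is itself one of these pure binomials subject to the same $\bfY$-degree bound.

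With this setup in place, the argument becomes a straightforward bookkeeping on $\bfY$-degrees. The S-polynomial $Y_k\bfS_{\bfv_i,\bfv_j}-Y_j\bfS_{\bfv_i,\bfv_k}$ has each of its two surviving monomials of $\bfY$-degree at least $1+(\alpha+\alpha')\geq 3.$ A reduction step by $\bfS_{\bfu,\bfu'}=Y_u Y_{u'}-Y_\bfa^A Y_\bfb^B$ replaces a term $c\,M\cdot Y_u Y_{u'}$ of $\bfY$-degree $d\geq 3$ with $c\,M\cdot Y_\bfa^A Y_\bfb^B$ of $\bfY$-degree $d-2+(A+B)\geq d,$ and it contributes the monomial $c\,M$ of $\bfY$-degree $d-2\geq 1$ to the corresponding $q_{i,j,k,u,u'}(\bfY).$ Iterating this step shows that every $q_{i,j,k,u,u'}(\bfY)$ is a sum of monomials of positive $\bfY$-degree and hence belongs to $\langle Y_1,\ldots, Y_M\rangle.$ The main obstacle I anticipate is a careful application of Lemma \ref{tool} to ensure that the reductions really never leave the cone $C,$ so that we never pick up a $\bfS_{\bfu,\bfu'}$ with a nontrivial $\bfp^{\bfgamma_{\bfu,\bfu'}}$ that could inject a constant coefficient into some $q_{i,j,k,u,u'}(\bfY);$ once this containment is secured, the uniform lower bound $\alpha+\alpha'\geq 2$ makes the $\bfY$-degree count automatic.
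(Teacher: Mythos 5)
Your proposal follows essentially the same route as the paper's own proof: you use that inside a single cone every binomial $\bfS_{\bfv,\bfv'}$ is pure (no $\bfp$-factor, by linearity of each $f_u$ on the cone together with \eqref{linear} and \eqref{nueve}) with $\alpha+\alpha'\ge 2$ (by minimality of the Hilbert basis), invoke Lemma~\ref{tool} to keep every reduction step inside that cone, and then run the same degree-nondecreasing count on the division algorithm to conclude that each $q_{i,j,k,u,u'}$ lies in $\langle Y_1,\ldots,Y_M\rangle$. Your reading of the conclusion as saying that every coordinate of $\cs_{i,j,k}$ in $\rmR[\bfY]^{\binom{M-1}{2}}$ lies in $\langle Y_1,\ldots,Y_M\rangle$ is also the intended one, consistent with the formulation of Lemma~\ref{aau2} and with how Lemma~\ref{au2} is applied in the proof of Theorem~\ref{machiv}.
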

\begin{proof}
Suppose w.l.o.g. $i<j<k,$ the other case being treated analogously. We have
$$
\bfS_{\bfv_{i},\bfv_{j}}=Y_{{i}} Y_{{j}}-Y_{t}^{\alpha} Y_{ {t+1}}^{\alpha'}
\ \ \mbox{and} \ \
\bfS_{\bfv_{i},\bfv_{k}}=Y_{{i}} Y_{{k}}-Y_{{u}}^{\beta} Y_{ {u+1}}^{\beta'},
$$
with $i\leq t<j,\,i\leq u<k,\,\alpha+\alpha'\geq2$ and $\beta+\beta'\geq2,$ as all these identities involve vectors of the Hilbert basis of the semigroup which generates the common cone in $\R^2.$ When computing the $S$-polynomial between these two binomials, we will end up with another binomial each of its terms has now total degree at least $3$. Thanks to Lemma \ref{tool}, we know that the whole division algorithm which will produce the expression \eqref{oto} will involve only pairs of non-adjacent vectors $\bfv_t,\,\bfv_{t'}$ in the common cone. This implies that at each step of the Division Algorithm, a monomial of total degree $\kappa$ is replaced by another of degree {\em at least} $\kappa$ as the division is always done against expressions of the form $Y_{t} Y_{{t'}}-Y_{m}^\gamma Y_{ {m+1}}^{\gamma'}$ with $\gamma+\gamma'\geq2.$
We conclude then that the polynomial $q_{i,j,k,u,u'}(\bfY)$ from \eqref{mamn} lies in 
$\langle Y_{1},\ldots, Y_{M}\rangle\mbox{\rm syz}\big(\bfS_{\bff,\cF, \bfp}\big).$
\end{proof}

We recall again that for $\bff$ is strict,  we can choose a set of indices $\{k_1,\ldots, k_{\ell-1}\}$ such that $f_{k_i}$ is non linear in $C_i\cup C_{i+1},\,i=1,\ldots, \ell-1.$ 

\begin{lemma}\label{yyavan}
With notation as above, if a nonzero constant term of the form $r\in\rmR$  appears in the support of one of the polynomials $q_{i,j,k,u,u'}(\bfY)$ from \eqref{mamn}, and $\bff$ is strict, then there is an index $s\in\{2,\ldots, \ell-1\}$ depending on $u$ such that $r\in\langle p_{k_{s-1}}, p_{k_s}\rangle.$
\end{lemma}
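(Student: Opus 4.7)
My plan is to trace, step by step, the division algorithm from Theorem \ref{4.1.10} that computes the decomposition of $\bfs_{i,j,k}$ modulo the Gr\"obner basis $\bfS_{\bff,\cF,\bfp}$, and to analyze how a constant term $r\in\rmR$ in some $q_{i,j,k,u,u'}(\bfY)$ can arise. Such a constant $r$ corresponds to a step at which a term $rY_uY_{u'}$ is cancelled using the basis element $\bfS_{\bfv_u,\bfv_{u'}}$, which by $\Z^2$-homogeneity of $\cs_{i,j,k}$ (of degree $\bfv_i+\bfv_j+\bfv_k$) forces $\bfv_u+\bfv_{u'}=\bfv_i+\bfv_j+\bfv_k$. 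I would then track $r$ through the algorithm: it accumulates as an $\rmR$-linear combination of products of $\bfp^{\bfgamma_{ab}}$ factors, one product per reduction chain from the starting coefficients $\pm\bfp^{\bfgamma_{ij}},\pm\bfp^{\bfgamma_{ik}}$ of $\bfs_{i,j,k}$ down to the cancellation at $rY_uY_{u'}$; this is because reducing a term $c\,\bfY^{\mu}$ by $\bfS_{\bfv_a,\bfv_b}=Y_aY_b-Y_w^{\alpha}Y_{w'}^{\alpha'}\bfp^{\bfgamma_{ab}}$ replaces it with $c\,\bfp^{\bfgamma_{ab}}\,\bigl(\bfY^{\mu}/(Y_aY_b)\bigr)\,Y_w^{\alpha}Y_{w'}^{\alpha'}$, so the new coefficient gains exactly the factor $\bfp^{\bfgamma_{ab}}$.

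Second, I would pinpoint the decisive step. Either $Y_uY_{u'}$ appears directly in the initial $S$-polynomial (this happens when one of $M_{ij},M_{ik}$ has $\alpha+\alpha'=1$, so equals $Y_c\bfp^{\bfgamma}$ with $\{k,c\}=\{u,u'\}$ or $\{j,c\}=\{u,u'\}$), or it is produced from a strictly degree-decreasing reduction via $\bfS_{\bfv_a,\bfv_b}$ with $\bfv_a+\bfv_b=\bfv_c\in\cH$ whose degree-three predecessor $Y_xY_aY_b$ satisfies $\{x,c\}=\{u,u'\}$. In each case, a pair $(\bfv_a,\bfv_b)$ of non-adjacent Hilbert basis vectors has $\bfv_a+\bfv_b=\bfv_c\in\{\bfv_u,\bfv_{u'}\}$, and Proposition \ref{aux} applied at $\bfv_c\in C_t$ yields a uniform separator $k^{\ast}\in\{k_{t-1},k_t\}$ such that $p_{k^{\ast}}\mid\bfp^{\bfgamma_{ab}}$.

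To conclude, I would take $s$ to be the index of the maximal cone containing $\bfv_u$. By Lemma \ref{tool} all intermediate vectors of the reduction lie in the cone spanned by $\{\bfv_i,\bfv_j,\bfv_k\}$, and the non-adjacency of $(\bfv_u,\bfv_{u'})$ together with the equality $\bfv_u+\bfv_{u'}=\bfv_i+\bfv_j+\bfv_k$ force $\bfv_u$ into an intermediate cone, so $s\in\{2,\ldots,\ell-1\}$. Each contribution to $r$ is then divisible by $p_{k_{s-1}}$ or $p_{k_s}$, giving $r\in\langle p_{k_{s-1}},p_{k_s}\rangle$. The main obstacle will be handling those chains whose decisive reduction targets $\bfv_{u'}$ rather than $\bfv_u$: the natural separators then come from the cone $C_{s'}$ of $\bfv_{u'}$, which may differ from $C_s$, and one must verify, using the lexicographic order and the priority rule of the division algorithm from Remark \ref{usare} (which picks the maximal-index leading term dividing the current leading monomial), that these contributions can still be absorbed into $\langle p_{k_{s-1}},p_{k_s}\rangle$ for the single choice of $s$ depending only on $u$.
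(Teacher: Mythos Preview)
Your plan is in the right spirit—tracking how the coefficient $r$ accumulates $\bfp^{\bfgamma}$ factors through the division algorithm—but it takes a detour through Proposition~\ref{aux} and the notion of a single ``decisive'' degree-decreasing step, and this detour creates precisely the obstacle you flag at the end. In your framework the separator you extract depends on whether the degree drop lands at $\bfv_u$ or at $\bfv_{u'}$; since these may lie in different maximal cones, you cannot in general force both contributions into $\langle p_{k_{s-1}},p_{k_s}\rangle$ for the single $s$ determined by $\bfv_u$. You acknowledge this but do not resolve it, so as written the argument has a genuine gap.

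The paper's proof avoids this entirely by not invoking Proposition~\ref{aux}. It simply sets $s$ so that $\bfv_u\in C_s$ and then splits into two cases according to the position of $\bfv_i$ (which is possible since $i\le u$): either $\bfv_i\in C_{s^*}$ with $s^*<s$, or $\bfv_i\in C_s$ and then necessarily $\bfv_k\in C_{s^*}$ with $s^*>s$ (the case where all of $\bfv_i,\bfv_j,\bfv_k$ lie in one cone is excluded by Lemma~\ref{au2}, since then no constant term can appear). In the first case one checks, by following the reductions, that every term contributing to the coefficient $r$ at $e_{(u,u')}$ carries a factor $p_{k_{s-1}}$: the boundary ray between $C_{s-1}$ and $C_s$ separates $\bfv_i$ from $\bfv_u$, and this separation is reflected in the $\bfp^{\bfgamma}$ factors picked up along the way. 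The second case is symmetric with $p_{k_s}$ in place of $p_{k_{s-1}}$. No single ``decisive step'' is needed, and no appeal to Proposition~\ref{aux} is made—so the ambiguity between $\bfv_u$ and $\bfv_{u'}$ never arises. I would recommend reorganizing your argument along these lines: fix $s$ via $\bfv_u$, case-split on where $\bfv_i$ sits, and argue that the relevant $p_{k_{s-1}}$ or $p_{k_s}$ factor is present in \emph{every} chain, rather than trying to isolate one distinguished reduction.
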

\begin{proof}
As before, we can suppose w.l.o.g. $i<j<k,$ the other case being treated analogously. 
Choose $s$ as the index such that $\bfv_u\in C_{s}.$ As $\bff$ is strict, each of the $(u,u')$ coming from \eqref{mamn} which produce a nontrivial $q_{i,j,k,u,u'}(\bfY)$ must satisfy \eqref{osit}. Moreover, due to Lemma \ref{au2}, we cannot have the three vectors $\bfv_i,\,\bfv_j,\,\bfv_k$ lying in the same cone.  This implies that there are two possible scenarios:
\begin{enumerate}
\item $\bfv_{i}\in \cH_{Q_{s^*}},$ with $s^*< s,$
\item $\bfv_{i}\in \cH_{Q_{s}},$ and $\bfv_{k}\in\cH_{Q_{s^*}},$ with $s<s^*.$
\end{enumerate}
In the first case, by unravelling the Division Algorithm to produce $\cs_{i,j,k},$ one concludes that $r\in \langle p_{k_{s-1}}\rangle.$ In the second case, via the same analysis and using also Lemma \ref{au2} we conclude that 
$r\in \langle p_{k_s}\rangle.$ This concludes with the proof of the Lemma.
\end{proof}

Denote with $\bfS^{(2)}_{\bff,\cF, \bfp}:=\{\cs_{i,j,k}\},$ where the indices $(i,j,k)$ run over all $1\leq i<j-1\leq M,\,k\in\{i+1,\ldots, n\}\setminus\{j-1,j\}.$
\begin{lemma}\label{pre1}
For $u, N\in\N$ such that $1\leq u< N,$ we have  
$\sum_{j=1}^{N-u}j{j+u\choose u}=(u+1){N+1\choose u+2}.$
\end{lemma}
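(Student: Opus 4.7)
The plan is to rewrite each summand using the standard identity
\[
j\binom{j+u}{u} = (u+1)\binom{j+u}{u+1},
\]
which one verifies by expanding both sides as ratios of factorials: both equal $\frac{(j+u)!}{u!\,(j-1)!}$. This reduces the left-hand side to
\[
\sum_{j=1}^{N-u} j\binom{j+u}{u} = (u+1)\sum_{j=1}^{N-u}\binom{j+u}{u+1}.
\]

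Next, I would substitute $k = j+u$ so that the remaining sum becomes $\sum_{k=u+1}^{N}\binom{k}{u+1}$, and then apply the hockey stick identity
\[
\sum_{k=r}^{N}\binom{k}{r} = \binom{N+1}{r+1}
\]
with $r = u+1$ to conclude $\sum_{k=u+1}^{N}\binom{k}{u+1} = \binom{N+1}{u+2}$. Putting the two steps together yields exactly $(u+1)\binom{N+1}{u+2}$.

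There is essentially no obstacle here: the identity is a routine manipulation of binomial coefficients, and both ingredients (the absorption identity $j\binom{j+u}{u} = (u+1)\binom{j+u}{u+1}$ and the hockey stick identity) are standard. The only minor care needed is ensuring the index bounds match after the substitution $k = j+u$, which is why the hypothesis $u < N$ matters: it guarantees the range $k = u+1,\ldots, N$ is nonempty, so the formula is not vacuous. Alternatively, one could prove the identity by induction on $N$, but the direct computation above is shorter and makes the combinatorial content transparent.
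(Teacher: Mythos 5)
Your proof is correct, but it takes a different route from the paper's. The paper proceeds by induction on $N$: it verifies the base case $N=2$ (where necessarily $u=1$), and in the inductive step peels off the $j=N+1-u$ term and invokes the inductive hypothesis on the remaining sum, then does the algebra to recombine $(N+1-u)\binom{N+1}{u} + (u+1)\binom{N+1}{u+2}$ into $(u+1)\binom{N+2}{u+2}$; it also treats $u=N$ as a separate direct computation. Your argument instead rewrites each summand via the absorption identity $j\binom{j+u}{u} = (u+1)\binom{j+u}{u+1}$ and then telescopes the resulting sum by the hockey-stick identity $\sum_{k=u+1}^{N}\binom{k}{u+1}=\binom{N+1}{u+2}$. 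Your route is shorter, avoids the two-case bookkeeping of the induction, and makes the combinatorial content visible in a single line; the paper's induction is more self-contained in the sense of not importing the hockey-stick identity, but is otherwise heavier. Both are valid proofs of the same statement.

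One small remark on your parenthetical about the role of $u<N$: the identity actually holds for $u=N$ as well, since the left-hand side is an empty sum and $\binom{N+1}{N+2}=0$, so the hypothesis $u<N$ is not needed to make the argument go through — it just matches the range in which the lemma is used.
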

\begin{proof}
By global induction on $N$. If $N=2,$ the only possible value is $u=1,$ and the claim follows straightforwardly. In the general case, if $u<N,$ then by using the induction hypothesis we compute
$$\begin{array}{ccl}
\sum_{j=1}^{N+1-u}j{j+u\choose u}&=&(N+1-u){N+1\choose u}+\sum_{j=1}^{N-u}j{j+u\choose u}\\
&=&(N+1-u){N+1\choose u}+(u+1){N+1\choose u+2}=\frac{(N+1)!}{u!(N-u-1)!}\left(\frac{1}{N-u}+\frac{1}{u+2}\right)\\
&=&(u+1){N+2\choose u+2}.
\end{array}
$$
The case $u=N<N+1$ follows by a direct computation.
\end{proof}

The following set will be useful in computing syzygies.

\begin{definition}
\label{sets}
For $1\leq u\leq M-3,$ let $S_{u}$ be the set of those ordered $(u+2)$-tuples $(i,j,k_1,\ldots, k_u)$ such that $i<j-1\leq M-1,\,1\leq i<k_1<k_2<\ldots<k_u\leq M$ and $k_m\notin\{j-1,j\}$ for all $m=1,\ldots, u.$
\end{definition}

\begin{lemma}\label{comb}
The cardinality of $S_u$ is equal to $(u+1){M-1\choose u+2}.$

\end{lemma}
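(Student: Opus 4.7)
The plan is to establish a bijection between $S_u$ and the set of triples $(i, T, t)$, where $i\in\{1,\ldots,M-2\}$, $T$ is a $(u+1)$-element subset of $\{i+1,\ldots,M-1\}$, and $t$ is a distinguished element of $T$. The map sends $(i, j, k_1, \ldots, k_u)\in S_u$ to $(i, T, t)$ where $t := j-1$ and $T := \{t\}\cup\{k_1',\ldots,k_u'\}$, with the shifted coordinates
\[
k_m' := \begin{cases} k_m & \text{if } k_m < j-1, \\ k_m - 1 & \text{if } k_m > j. \end{cases}
\]
Since $k_m\notin\{j-1,j\}$ this covers all cases, and the shift moves each $k_m$ out of $\{j-1,j,\ldots,M\}$ into $\{i+1,\ldots,M-1\}$ while remaining distinct from $t$; so $T\subset\{i+1,\ldots,M-1\}$ has exactly $u+1$ elements. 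The inverse is transparent: given $(i,T,t)$, recover $j=t+1$, list $T\setminus\{t\}=\{k_1'<\cdots<k_u'\}$, and set $k_m=k_m'$ when $k_m'<t$ and $k_m=k_m'+1$ when $k_m'>t$; this produces a valid element of $S_u$ since $j=t+1\in\{i+2,\ldots,M\}$ and $k_m\notin\{j-1,j\}$ by construction.

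Given the bijection, the count is immediate. For each $i\in\{1,\ldots,M-2\}$, there are $\binom{M-1-i}{u+1}$ choices of $T$ and $(u+1)$ choices of $t\in T$, so
\[
|S_u| \;=\; (u+1)\sum_{i=1}^{M-2}\binom{M-1-i}{u+1} \;=\; (u+1)\sum_{n=1}^{M-2}\binom{n}{u+1}.
\]
The hockey stick identity $\sum_{n=0}^{M-2}\binom{n}{u+1}=\binom{M-1}{u+2}$, together with $\binom{0}{u+1}=0$ (as $u\ge 1$), gives the claimed value $(u+1)\binom{M-1}{u+2}$. Alternatively, one can avoid the bijection entirely: fix $i$, observe that there are $M-i-1$ choices of $j\in\{i+2,\ldots,M\}$ and then $\binom{M-i-2}{u}$ choices of the $u$-subset $\{k_1,\ldots,k_u\}\subset\{i+1,\ldots,M\}\setminus\{j-1,j\}$, and apply the identity $(m+1)\binom{m}{u}=(u+1)\binom{m+1}{u+1}$ followed by hockey stick (or invoke Lemma~\ref{pre1} directly after the substitution $m=M-i-2$).

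There is no real obstacle here; the only care is in verifying that the shift $k_m\mapsto k_m'$ is well-defined and injective, which follows immediately from $k_m\notin\{j-1,j\}$ and the monotonicity of the shift. The bijective presentation has the advantage of making the factor $u+1$ combinatorially transparent as a choice of distinguished element, which will be useful for tracking the corresponding Betti numbers in the subsequent sections.
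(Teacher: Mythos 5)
Your proof is correct, and it refines the paper's argument in a useful way. The paper counts, for each fixed $i$, the $u$-subsets $\{k_1',\ldots,k_u'\}\subset\{i+1,\ldots,M-1\}$ (your shifted coordinates, left implicit there) and then multiplies by the $M-1-u-i$ remaining positions for $j-1$, producing $\sum_i \binom{M-1-i}{u}(M-1-u-i)$, which is then evaluated via the auxiliary Lemma~\ref{pre1} (itself proved by a two-variable induction). You instead merge the two choices into a single $(u+1)$-subset $T$ with a distinguished element $t$, which makes the factor $u+1$ appear immediately and reduces the sum to the hockey-stick identity, bypassing Lemma~\ref{pre1} entirely. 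You also make the bijection fully explicit, including the shift and its well-definedness; the paper's phrase ``expand the sequence by adding $j$ to the right'' glosses over this. The trade-off is minimal: the paper's version isolates a reusable identity as a lemma, while yours is shorter, self-contained, and makes the combinatorial meaning of the coefficient $u+1$ transparent.
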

\begin{proof}
To count the elements of $S_{u}$ we proceed as follows: as the indices $j-1$ and $j$ must be disjoint from the whole sequence $i<k_1<\ldots<k_u,$ we first count the number of  sequences $k_1<k_2<\ldots <k_u$  contained in the set $\{i+1,\ldots, M-1\}.$ This number is clearly ${M-1-i\choose u}.$ For each such distribution, we can ``choose'' as the value of $j-1$ any of the remaining numbers not taken yet, and ``expand'' the sequence by adding $j$ to the right (and hence expanding the cardinality of the set up to $M$ .  The number of positions available for $j-1$ is clearly  equal to  $(M-1-u-i).$ So, the cardinality of $S_u$ is
$$\sum_{i=1}^{M-u-2}{M-1-i\choose u}(M-1-u-i)=\sum_{j=1}^{M-u-2}j{j+u\choose u}=(u+1){M-1\choose u+2},
$$
the last equality follows thanks to Lemma \ref{pre1}
\end{proof}
\begin{corollary}\label{coxis}
The cardinality of $\bfS^{(2)}_{\bff,\cF, \bfp}$ is equal to $2{M-1\choose 3}.$
\end{corollary}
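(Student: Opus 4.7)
The plan is to identify the indexing set of $\bfS^{(2)}_{\bff,\cF,\bfp}$ with the set $S_1$ of Definition \ref{sets} and then directly invoke Lemma \ref{comb} with $u=1$, which yields $|S_1|=(1+1)\binom{M-1}{3}=2\binom{M-1}{3}$.

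First I would spell out the defining conditions on the admissible triples $(i,j,k)$ that index $\cs_{i,j,k}$: namely $1\leq i$, $i+1<j\leq M$, and $k\in\{i+1,\ldots,M\}\setminus\{j-1,j\}$. Comparing with the definition of $S_1$ (a $3$-tuple $(i,j,k_1)$ with $i<j-1\leq M-1$, $1\leq i<k_1\leq M$, and $k_1\notin\{j-1,j\}$), these are literally the same conditions, so the map $(i,j,k)\mapsto(i,j,k)$ is a bijection between the admissible triples and $S_1$.

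Next I would check that each admissible triple really does define an element of $\bfS^{(2)}_{\bff,\cF,\bfp}$ via \eqref{sisi}, i.e.\ that both S-polynomial factors lie in $\bfS_{\bff,\cF,\bfp}$. The pair $(\bfv_i,\bfv_j)$ is non-adjacent because $i+1<j$. In case $i<j<k$ we have $i+1<j<k$, so $k\geq i+2$ and $(\bfv_i,\bfv_k)$ is non-adjacent; in case $i<k<j$ the exclusion $k\neq j-1,j$ together with $k<j$ forces $k\leq j-2$, so $(\bfv_k,\bfv_j)$ is non-adjacent. Hence $\cs_{i,j,k}$ is a genuine S-polynomial between two distinct elements of $\bfS_{\bff,\cF,\bfp}$. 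Distinctness of the resulting elements across triples is immediate: the (ordered) pair of elements of $\bfS_{\bff,\cF,\bfp}$ whose S-polynomial is taken can be read off from $(i,j,k)$ and from the case distinction in \eqref{sisi}, so different admissible triples give different S-polynomials.

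With the bijection in hand, applying Lemma \ref{comb} for $u=1$ gives $|\bfS^{(2)}_{\bff,\cF,\bfp}|=|S_1|=2\binom{M-1}{3}$. There is no real obstacle: the only point requiring attention is the matching of defining conditions and the observation that both ``branches'' of \eqref{sisi} together account exactly for the triples in $S_1$, which is already built into the combinatorial set-up.
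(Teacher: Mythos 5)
Your proposal is correct and matches the paper's own proof, which simply cites Lemma \ref{comb} with $u=1$. Your extra verification that the admissible-triple conditions coincide with the definition of $S_1$ (and that both branches of \eqref{sisi} yield genuine S-polynomials of non-adjacent pairs) is a reasonable unpacking of what the paper leaves implicit, but it is the same argument.
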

\begin{proof}
The claim follows straightforwardly from Lemma \ref{comb} with $u=1.$
\end{proof}

\begin{theorem}\label{machiv}
If  $|\cF|$ is a proper convex polyhedral cone,  $\bfS^{(2)}_{\bff,\cF, \bfp}$ is a minimal Gr\"obner basis of $\mbox{\rm syz}\big(\bfS_{\bff,\cF, \bfp}\big)$ for the induced order $\prec_\cS.$ If in addition,
 $\bff$ is strict with respect to $\cF,$ none of the  $p_{k_i}$'s is  a zero divisor,  and  for $1\leq i\leq j\leq \ell-1,\,\langle p_{k_i}, p_{k_j}\rangle\neq\rmR,$ then it is also a minimal set of generators of this submodule.
\end{theorem}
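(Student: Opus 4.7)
The two assertions are handled separately.

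\emph{Minimal Gr\"obner basis.} The plan is to apply Schreyer's theorem (Theorem \ref{3713}) to the Gr\"obner basis $\bfS_{\bff,\cF,\bfp}$ of $\ker(\varphi_0)$ from Theorem \ref{mt}, sorted by the lexicographic order on the indices $(i,j)$ of non-adjacent pairs. This produces a Gr\"obner basis of $\mbox{\rm syz}(\bfS_{\bff,\cF,\bfp})$ for $\prec_\cS$ consisting of all $\cS_{\bfv_i,\bfv_j,\bfv_{i'},\bfv_{j'}}$ with $(i,j) \prec (i',j')$, whose leading term by \eqref{liding} equals $\frac{\mbox{\rm lcm}(Y_iY_j, Y_{i'}Y_{j'})}{Y_iY_j}e_{(i,j)}$. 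I would distinguish two cases: when $\{i,j\} \cap \{i',j'\}$ is a single element, the leading term is $Y_ke_{(i,j)}$ and the syzygy is precisely a member of $\bfS^{(2)}_{\bff,\cF,\bfp}$; when $\{i,j\}$ and $\{i',j'\}$ are disjoint, the leading term $Y_{i'}Y_{j'}e_{(i,j)}$ is divisible by either $Y_{i'}e_{(i,j)}$ or $Y_{j'}e_{(i,j)}$, at least one of which is the leading term of some element of $\bfS^{(2)}_{\bff,\cF,\bfp}$ (checking that $(i,j,i')$ or $(i,j,j')$ is admissible; at least one always is because $\{i,j\} \cap \{i',j'\} = \emptyset$). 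Iteratively removing these redundant disjoint-case syzygies preserves the Gr\"obner basis property, so $\bfS^{(2)}_{\bff,\cF,\bfp}$ is a Gr\"obner basis, and it is minimal because the collection $\{Y_ke_{(i,j)}\}$ is an antichain under module divisibility.

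\emph{Minimal generating set.} Here the plan is to follow the pattern of Theorem \ref{mtb}. Suppose for contradiction that
\[
\cs_{i,j,k} \;=\; \sum_{(i',j',k')\neq (i,j,k)} Q_{i',j',k'}(\bfY)\,\cs_{i',j',k'}
\]
for some admissible-indexed $\Z^2$-homogeneous $Q_{i',j',k'} \in \rmR[\bfY]$, so that $\deg Q_{i',j',k'} = \bfv_i+\bfv_j+\bfv_k-\bfv_{i'}-\bfv_{j'}-\bfv_{k'}$. I would compare the coefficients of $e_{(i,j)}$ on both sides: the left-hand side yields exactly $Y_k$ by \eqref{mamn}, while the right-hand side receives contributions from (a) the leading terms $Y_{k'}e_{(i,j)}$ of $\cs_{i,j,k'}$ with $k' \neq k$; (b) the explicit second terms of $\cs_{i',j',k'}$ for those $(i',j',k')$ where $(i,k') = (i,j)$ or $(k',j') = (i,j)$; and (c) tails $q_{i',j',k',i,j}(\bfY)$. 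Next, I would specialize $Y_m \mapsto 0$ for $m \notin \{i,j,k\}$ (analogously to \eqref{abb} in the proof of Theorem \ref{mtb}) to eliminate most terms, and then use Lemmas \ref{au1}--\ref{au2} and Lemma \ref{yyavan} (via the strictness of $\bff$) to confine the surviving tail contributions in (c) to an ideal of the form $\langle p_{k_s}, p_{k_{s'}}\rangle$ for indices $s,s'$ determined by the positions of $\bfv_i, \bfv_j, \bfv_k$ in the cones of $\cF$. Since this ideal is proper by hypothesis and the $p_{k_t}$'s are non-zero divisors, the specialized identity modulo $\langle p_{k_s}, p_{k_{s'}}\rangle$ would force $Y_k$ to be a multiple of some $Y_{k'}$ with $k' \neq k$, which is impossible; this gives the desired contradiction.

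\emph{Main difficulty.} The principal technical obstacle is the bookkeeping in the second part: precisely determining, for each admissible $(i',j',k')$, when the explicit or tail terms of $\cs_{i',j',k'}$ can contribute to the coefficient of $e_{(i,j)}$, and controlling these contributions after specialization. Lemmas \ref{au1}--\ref{au2} constrain but do not fully eliminate the tail contributions, and combining them with the strictness condition requires a case analysis based on how $\bfv_i, \bfv_j, \bfv_k$ distribute among the maximal cones $C_1, \ldots, C_\ell$ of $\cF$, more intricate than the two-variable case handled in Theorem \ref{mtb}.
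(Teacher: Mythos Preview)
Your treatment of the first assertion (minimal Gr\"obner basis) is essentially the same as the paper's, with one small inaccuracy: when $|\{i,j\}\cap\{i',j'\}|=1$, the Schreyer syzygy $\cS_{\bfv_i,\bfv_j,\bfv_{i'},\bfv_{j'}}$ is not always literally an element of $\bfS^{(2)}_{\bff,\cF,\bfp}$ (for instance when $j=i'$ it is $\cS_{\bfv_i,\bfv_j,\bfv_j,\bfv_{j'}}$, whereas $\cs_{i,j,j'}=\cS_{\bfv_i,\bfv_j,\bfv_i,\bfv_{j'}}$). What is true, and what both you and the paper actually use, is that its \emph{leading term} $Y_{k}e_{(i,j)}$ coincides with the leading term of some $\cs_{i,j,k}\in\bfS^{(2)}_{\bff,\cF,\bfp}$, so the leading-term modules agree.

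For the second assertion (minimal generation) there is a genuine gap. Among the admissible triplets $(i',j',k')$ appearing in your putative relation, there is at most one besides $(i,j,k)$ whose index \emph{set} equals $\{i,j,k\}$, namely the companion $(i,k,j)$ when it is admissible. Its coefficient $Q_{i,k,j}$ has $\Z^2$-degree $\bfv_i+\bfv_j+\bfv_k-\bfv_i-\bfv_k-\bfv_j=\mathbf 0$, so $Q_{i,k,j}=r_{i,k,j}\in\rmR$ is a constant. When $i<k<j$, the explicit second term of $\cs_{i,k,j}$ is $-Y_k\,e_{(i,j)}$ (see \eqref{mamn}), so after your specialization the $e_{(i,j)}$-coordinate identity reads
\[
Y_k \;=\; -\,r_{i,k,j}\,Y_k \;+\; (\text{terms controlled by Lemmas \ref{au1}, \ref{au2}, \ref{yyavan}}),
\]
and only the second group lies in $\langle p_{k_{s-1}},p_{k_s}\rangle$. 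You do \emph{not} get that $Y_k$ is a multiple of some $Y_{k'}$ with $k'\neq k$; you only get $1+r_{i,k,j}\in\langle p_{k_{s-1}},p_{k_s}\rangle$, which is no contradiction by itself. The paper closes this gap with a second pass: it reads off the $e_{(i,k)}$-coordinate and specializes $Y_u\mapsto 0$ for $u\neq j$. Using Lemma~\ref{au1} (now with the admissible triplet $(i,k,j)$) to kill the tail contribution of $\cs_{i,j,k}$ in that coordinate, one obtains $r_{i,k,j}\in\langle p_{k_{s-1}},p_{k_s}\rangle$ as well, and only then does $1\in\langle p_{k_{s-1}},p_{k_s}\rangle$ follow, contradicting the hypothesis. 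Your outline as written omits this second step and therefore does not reach a contradiction in the case that the companion triplet $(i,k,j)$ is admissible.
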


\begin{proof}
From \eqref{sisi} and \eqref{liding}, we deduce straightforwardly that for the monomial order $\prec_\cS,$
\begin{equation}\label{juan}
\mbox{\rm lt}(\cs_{i,j,k})=Y_{k}\,e_{(i,\,j)}.
\end{equation}
Theorem \ref{3713} implies that the set 
$\{\cS_{\bfv_i,\,\bfv_j,\,\bfv_k,\,\bfv_u}\}$ indexed by pairs of pairs of  non-adjacent  $(i,j)\prec (k,u)$ 
in $\cH$, generate $\mbox{\rm syz}\big(\bfS_{\bff,\cF, \bfp}\big).$ In addition, from \eqref{otor} we get that 
the leading term of each of these elements is a monomial of total degree one or two in the variables  $\bfY$'s times $e_{(i,\,j)}.$ To show that it is a Gr\"obner basis, it will be enough to show that any of these monomials is a multiple of $Y_{k},$ with $k\in\{i+1,\ldots, M\}\setminus\{j-1,j\}.$ 
\par We straightforwardly verify that, for $(i,j)\prec (k,u),$ 
\begin{equation}\label{oxis}
\frac{\mbox{lcm}(Y_{i}Y_{j},Y_{k}Y_{u})}{Y_{i}Y_{j}}=\mbox{lt}\big(\cS_{\bfv_i,\,\bfv_j,\,\bfv_k,\,\bfv_u}\big)\in\{Y_{k},\,Y_{u},\,Y_{k}Y_{u}\}.
\end{equation}
It is also easy to conclude that neither $Y_{i}$ nor $Y_{j}$ can appear in \eqref{oxis}.  So, the piece in degree $(e_i, e_j)$ of $\mbox{Lt}\Big(\mbox{\rm syz}\big(\bfS_{\bff,\cF, \bfp}\big)\Big)$ is contained in
$$\langle Y_{i+1}, \ldots, Y_{j-1}, Y_{j+1},\ldots, Y_M\rangle \, e_{(i,j)}.
$$
From \eqref{juan}, we only need to show that $Y_{{j-1}}\,e_{(i,\,j)}\notin\mbox{\rm Lt}\Big(\mbox{\rm syz}\big(\bfS_{\bff,\cF, \bfp}\big)\Big).$
But if this was the case, then it should come from either $\cS_{\bfv_i,\,\bfv_j,\,\bfv_i,\,\bfv_{j-1}}$ - which cannot happen as in our order we have $(i,j-1)\prec(i,j)$, or from $\cS_{\bfv_i, \bfv_j, \bfv_{j-1},\bfv_j}.$ But $\bfv_{j-1}$ and $\bfv_j$ are adjacents, so the latter expression has no sense.  These arguments imply that $\bfS^{(2)}_{\bff,\cF, \bfp}$ is a Gr\"obner basis of $\mbox{\rm syz}\big(\bfS_{\bff,\cF, \bfp}\big).$ The fact that it is minimal follows straightforwardly as if we remove one of these elements, the leading module generated by the remaining family will not contain the leading term of the removed syzygy.
\par
To prove now that it is also a minimal set of generators of this submodule given the extra hypothesis, we proceed as follows: assume that one of them is a polynomial combination of the others. So, we must have
\begin{equation}\label{sweet}
\cs_{i,j,k}=\sum_{i',j',k'}q_{i',j',k'}(\bfY)\cs_{i',j',k'}
\end{equation}
for admissible triplets $(i',j',k'),$ with $q_{i',j',k'}(\bfY)\in \rmR[\bfY]$ of $\Z^2$-degree $\bfv_i+\bfv_j+\bfv_k-\bfv_{i'}-\bfv_{j'}-\bfv_{k'}.$  Thanks to Lemma \ref{tool}, we get that the $e_{(i,j)}$ coordinates in \eqref{sweet} produces
\begin{equation}\label{avobe}
Y_{k}= -q_{i,k,j}(\bfY)Y_{k}+\sum_{i'_1,j'_1,k'_1}q_{i'_1,j'_1,k'_1}(\bfY)q_{i'_1,j'_1,k'_1,i,j}(\bfY) \end{equation}
for some triplets $(i'_1,\,j'_1,\,k'_1)$ which must verify  $ i'_1\leq i< j \leq \max\{j'_1,k'_1\}$ due to \eqref{osit}.  Note that $-q_{i,k,j}(\bfY)Y_{k}$ appears if and only if $(i,k,j)$ is also an admissible triplet.
\par
Now we evaluate $Y_{u}\mapsto0$ for all $u\neq k$ in \eqref{avobe} to get
\begin{equation}\label{29}Y_{k}= -q_{i,k,j}(0,\ldots, Y_{k},\ldots, 0)Y_{k}+\sum_{i'_1,j'_1,k'_1}q_{i'_1,j'_1,k'_1}(0,\ldots, Y_{k}, \ldots, 0)q_{i'_1,j'_1,k'_1,i,j}(0,\ldots, Y_{k},\ldots, 0)
\end{equation}
which we write
\begin{equation}\label{avobbe}
Y_{k}= -Q_{i,k,j}(Y_{k})Y_{k}+\sum_{i'_1,j'_1,k'_1}Q_{i'_1,j'_1,k'_1}(Y_{k})Q_{i'_1,j'_1,k'_1,i,j}(Y_{k})\in\rmR[Y_{k}].
\end{equation}
As the polynomials $q_{i,k,j}(\bfY),\,q_{i'_1,j'_1,k'_1}(\bfY),\,q_{i'_1,j'_1,k'_1,i,j}(\bfY)\in\rmR[\bfY]$ are $\Z^2$-graded, this applies also to $Q_{i,k,j}(Y_{k}),\,Q_{i'_1,j'_1,k'_1}(Y_{k}),\,Q_{i'_1,j'_1,k'_1,i,j}(Y_{k})\in\rmR[Y_{k}],$ from which we deduce that they are all monomials. Moreover, as the $\Z^2$-degree of $q_{i,k,j}(\bfY)$ is $(0,0),$ we have then that \eqref{avobbe} becomes 
\begin{equation}\label{avobbbe}
Y_{k}= -r_{i,k,j}Y_{k}+\sum_{i'_1,j'_1,k'_1}r_{i'_1,j'_1,k'_1}Y_{k}^{A_{i'_1,j'_1,k'_1}}r_{i'_1,j'_1,k'_1,i,j}Y_{k}^{B_{i'_1,j'_1,k'_1,i,j}}
\end{equation}
for suitable  $r_{i,k,j},\,  r_{i'_1,j'_1,k'_1},\,r_{i'_1,j'_1,k'_1,i,j}\in\rmR,\,A_{i'_1,j'_1,k'_1},\,B_{i'_1,j'_1,k'_1,i,j}\in\N.$ Note that $r_{i,k,j}=q_{i,k,j}({\bf0}).$
\par
If $1+ r_{i,k,j}\neq0,$ then from \eqref{avobbbe} we deduce that there must be an admissible triplet $(i'_1,j'_1,k'_1)$ such that $r_{i'_1,j'_1,k'_1}\cdot r_{i'_1,j'_1,k'_1,i,j}\neq0$, and $A_{i'_1,j'_1,k'_1}+B_{i'_1,j'_1,k'_1,i,j}=1.$
From Lemma \ref{au1}, we deduce that  the case $B_{i'_1,j'_1,k'_1,i,j}=1$ cannot happen, as 
$r_{i'_1,j'_1,k'_1,i,j}Y_{k}^{B_{i'_1,j'_1,k'_1,i,j}}$ is one of the terms of $q_{i'_1,j'_1,k'_1,i,j}(\bfY),$ and the triplet $(i,j,k)$ is admissible.
\par
So, we must have $B_{i'_1,j'_1,k'_1,i,j}=0,$ which by Lemma \ref{au2} can only happen if not all of $\{\bfv_{i'_1},\,\bfv_{j'_1},\,\bfv_{k'_1}\}$ lie in the same cone. By Lemma \ref{yyavan}, we deduce that
$r_{i'_1,j'_1,k'_1,i,j}\in\langle p_{k_{s-1}}, p_{k_{s}}\rangle,$ $s$ being the index of the cone containing $\bfv_i.$ This implies that $1+r_{i,k,j}\in \langle p_{k_{s-1}}, p_{k_{s}}\rangle.$
Note that the latter also holds trivially if $1+r_{i,k,j}=0,$ so we can remove from now on our assumption on the value of this element. In addition, we must have  $r_{i,k,j}\neq0$ as otherwise we would have $1\in\langle p_{k_{s-1}},\,p_{k_s}\rangle,$ contradicting the hypothesis. So, the Theorem would have been proven already if the triplet $(i,k,j)$ is not admissible.
\par
To finish with our argument, we consider instead the coordinate $e_{(i,k)}$ in \eqref{sweet} and evaluate $Y_{u}\to0$ for $u\neq j$ to get
\begin{equation}\label{cer}
0=r_{i,k,j}Y_{j}+ \sum_{i'_1,j'_1,k'_1}q_{i'_1,j'_1,k'_1}(0,\ldots, Y_{j},\ldots, 0)q_{i'_1,j'_1,k'_1,i,k}(0,\ldots, Y_{j},\ldots, 0)
\end{equation}
instead of \eqref{29}. The $0$ at the left hand side of \eqref{cer} is due to the fact that if $(i,k,j)$ is admissible and has as one of its terms $Y_{k}e_{(i,j)},$ then we must have 
\begin{equation}\label{last1}
Y_{j}e_{(i,k)}=\mbox{lt}(\cs_{i,k,j})\succ Y_{k}e_{(i,j)}=\mbox{lt}(\cs_{i,j,k}),
\end{equation}
both terms appearing in $\cs_{i,k,j}.$
Note that If $q_{i,j,k,i,k}(\bfY),$ which appears in the expansion of $\cs_{i,j,k}$ in the coordinate $e_{(i,k)}$ were nonzero, then its $\Z^2$-degree would be equal to $\bfv_j$. After the evaluation  $Y_{u}\to0$ for $u\neq j$ it would become $rY_{j}$, which, thanks to Lemma \ref{au1}, would imply that $r=0.$ This justifies the $0$ in the left hand side of \eqref{cer}, and from there we deduce  as before 
\begin{equation}\label{samee}
-r_{i,k,j}Y_{j}=\sum_{i'_1,j'_1,k'_1}r_{i'_1,j'_1,k'_1}Y_{Y_{v_j}}^{A_{i'_1,j'_1,k'_1}}r_{i'_1,j'_1,k'_1,i,k}Y_{j}^{B_{i'_1,j'_1,k'_1,i,k}},
\end{equation}
for suitable  $r_{i,k,j},\,  r_{i'_1,j'_1,k'_1},\,r_{i'_1,j'_1,k'_1,i,j}\in\rmR,\,A_{i'_1,j'_1,k'_1},\,B_{i'_1,j'_1,k'_1,i,j}\in\N.$ 
Arguing as before, we deduce that  $-r_{i,j,k}\in \langle p_{k_{s-1}},\,p_{k_s}\rangle,$  (the index $s$ is the same as in the previous case, as it indexes the cone $C_s$ containing $\bfv_i$) which, combined with $1+r_{i,j,k}\in \langle p_{k_{s-1}},\,p_{k_s}\rangle,$ contradicts the hypothesis of the theorem. This concludes the proof.
\end{proof}

\begin{example}\label{phii1}
We will continue our running example with the computation of the kernel of $\varphi_1$ for the fan algebra $\cB_{\rmR, p} (3,2)$ from Remark~\ref{ff23}, (4). The map $$\varphi_1:  \rmR[\bfY]^5 \to \rmR [\bfY]$$ sends
$e_{i,j} \to S_{i,j},$ and we expect $2\binom{4}{3} =8$ generating relations in the kernel, due to the cardinality of $S_1$, as in Lemma~\ref{comb}. 
Sorted using the order induced by $\{ e_{1,3}, \ldots, e_{3,5} \}$ with the lexicographic order $Y_1 \prec \ldots \prec Y_5$ in mind, these are
\begin{equation}\label{cozik}
\begin{array}{ccl}
\cs_{1,3, 4} &=& Y_4e_{1,3} -Y_3e_{1,4}+Y_2e_{2,4}\\
\cs_{1,3, 5} &=& Y_5e_{1,3}-Y_3e_{1,5}+pY_4e_{2,4} +Y_2 e_{2,5}\\
\cs_{1, 4, 2} &=&  Y_2e_{1,4}-Y_1e_{2,4}-pe_{1,3}\\
\cs_{1,4, 5} &=& Y_5e_{1,4}-Y_4e_{1,5}+pe_{2,5}\\
\cs_{1,5, 2}&=& Y_2e_{1,5}-Y_1e_{2,5}-pY_4e_{1,4} \\
\cs_{1,5,3} &=& Y_3e_{1,5}-Y_1e_{3,5}-pY_4e_{2,4}-Y^2_4e_{1,4} \\
\cs_{2, 4, 5} &=& Y_5e_{2,4}-Y_4e_{2,5}+pe_{3,5}\\
\cs_{2, 5, 3} &=&Y_3e_{2,5}-Y_2e_{3,5} -Y^2_4e_{2,4}.
\end{array}
\end{equation}
For the first relation, we have the triplet $(1,3, 4) \in S_1$ and hence we need to consider the elements $e_{1,3}\to S_{1,3}=Y_1Y_3 -Y_2^2, e_{1,4} \to S_{1,4}= Y_1Y_4-pY_2$. The $S$-polynomial of $S_{1,3}, S_{1,4}$ is $Y_4(Y_1Y_3 -Y_2^2)-Y_3(Y_1Y_4-pY_2)=-Y_2(pY_3-Y_2Y_4)=-Y_2S_{2,4},$ so $Y_4e_{1,3}-Y_3e_{1,4}+Y_2e_{2,4}$ is in the kernel of $\varphi_1$, which is our first relation.

\end{example}

\section{Higher Syzygies}\label{hs}
From Theorem \ref{machiv} we deduce now the following exact sequence of $\rmR[\bfY]$-modules:
\begin{equation}\label{phi2}
\begin{array}{ccccccc}
\rmR[\bfY]^{2{M-1\choose 3}}& \stackrel{\varphi_2}{\to} &\rmR[\bfY]^{M-1\choose 2}&\stackrel{\varphi_1}{\to}& \rmR[\bfY]\ \stackrel{\varphi_0}{\to}&\cB_{\bff,\cF,\bfp}\to0\\
e_{(i,j,k)}&\mapsto&\cs_{i,j,k}&&&&\\
& & e_{(i,\,j)}&\mapsto & \bfS_{i,\,j},&&
\end{array}
\end{equation}
where we have made the identification $\rmR[\bfY]^{2{M-1\choose 3}}\simeq\bigoplus_{i,j,k}\rmR[\bfY]\cdot e_{(i,j,k)},$
the sum being over the admissible triplets $(i,j,k).$

To continue with a full resolution of \eqref{phi2}, we do a process similar to \eqref{sisi} recursively: 
we sort the elements in  $\bfS^{(2)}_{\bff,\cF, \bfp}$ with the lexicographic order in $\N^3,$ and given $1\leq i<j-1\leq n$ and $k<k'$ with $k,\,k'\in\{i+1,\ldots, n\}\setminus\{j-1,j\}$ we compute  the syzygy between $\cs_{i,j,k}$ and $\cs_{i,j,k'},$ which we denote by ${\bf S}_{i,j,k,k'}.$ Thanks to \eqref{juan}, we know that
\begin{equation}\label{arripoa}
{\bf S}_{i,j,k,k'}=Y_{{k'}}\cs_{i,j,k}-Y_{k}\cs_{i,j,k'}=\sum_{i_0,j_0,k_0}q_{i,j,k,k',i_0,j_0,k_0}(\bfY)\cs_{i_0,j_0,k_0},
\end{equation}
where the expression in the right hand side comes from applying the division algorithm to $Y_{{k'}}\cs_{i,j,k}-Y_{{k}}\cs_{i,j,k'}$ with respect to $\bfS^{(2)}_{\bff,\cF,\bfp}.$ We then define
\begin{equation}\label{gato}
\cs_{i,j,k,k'}=Y_{{k'}}e_{(i,j,k)}-Y_{{k}}e_{(i,j,k')}-\sum_{i_0,j_0,k_0}q_{i,j,k,k',i_0,j_0,k_0}(\bfY)e_{(i_0,j_0,k_0)}\in\mbox{\rm ker}(\varphi_2),
\end{equation}
and set $\bfS^{(3)}_{\bff,\cF,\bfp}$ the set of all these elements. As before, we will say that such a triplet $(i,j,k,k')$ is admissible. It is easy to verify that the cardinality of  $\bfS^{(3)}_{\bff,\cF,\bfp}$  is equal to the number of ordered $4$-tuples $(i,j,k,k')$ with $i<j-1,\,k<k'$ with $k,\,k'\in\{i+1,\ldots,M\}\setminus\{j-1,j\}.$  This number is $3{M-1\choose 4},$ thanks to Lemma \ref{comb} with $\ell=2.$ 

The following is be the generalization of \eqref{osit} to this case, its proof being straightforward by using Lemma \ref{tool} and \eqref{osit}.

\begin{lemma}\label{toool}
With notations as above, we have that 
$i\leq i_0,\,j_0, k_0\leq \max\{j,k,k'\}.$
\end{lemma}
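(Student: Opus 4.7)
The plan is to mirror the argument behind \eqref{osit} and Lemma \ref{tool} one level higher, tracking simultaneously the coordinate indices of the $e_{(i_0,j_0,k_0)}$ appearing in $\cs_{i,j,k,k'}$ and the indices of the $\bfY$-variables that show up in its polynomial coefficients, and then induct on the steps of the division algorithm in \eqref{arripoa}.

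First I would invoke \eqref{osit} (and Lemma \ref{tool}) applied to $\cs_{i,j,k}$ and $\cs_{i,j,k'}$ separately, to obtain that every coordinate $e_{(u,u')}$ appearing in either of these two syzygies satisfies
\[
i \leq u < u' \leq \max\{j,k,k'\},
\]
and that every variable $Y_m$ occurring in the corresponding $\bfY$-coefficients also satisfies $i\leq m\leq \max\{j,k,k'\}$. The latter follows by revisiting the proof of Lemma \ref{tool}: the coefficients produced by reducing the initial $S$-polynomial against $\bfS_{\bff,\cF,\bfp}$ only involve variables indexed by the adjacent vectors inside the cone generated by $\{\bfv_i,\bfv_j,\bfv_k\}$ (respectively $\{\bfv_i,\bfv_j,\bfv_{k'}\}$), so all such indices remain in the range above.

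Next I would pass to the input of the division algorithm, $P_0:=Y_{k'}\cs_{i,j,k}-Y_{k}\cs_{i,j,k'}$. Every term $r\,\bfY^{\bfalpha}\,e_{(u,u')}$ of $P_0$ inherits the coordinate bound $i\leq u<u'\leq\max\{j,k,k'\}$, and since we multiplied by $Y_{k'}$ or $Y_{k}$, the $\bfY$-support of $\bfY^{\bfalpha}$ continues to consist of variables with index in $[i,\max\{j,k,k'\}]$. Now I would perform an induction on the number of reduction steps: by \eqref{juan}, each element of $\bfS^{(2)}_{\bff,\cF,\bfp}$ has leading term $Y_{k_0}e_{(i_0,j_0)}$, so the only way it can be used to reduce a current term $r\,\bfY^{\bfalpha}\,e_{(u,u')}$ is to have $(i_0,j_0)=(u,u')$ with $Y_{k_0}\mid\bfY^{\bfalpha}$; this immediately forces $(i_0,j_0,k_0)$ to lie in the desired range. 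The new terms introduced after subtracting the corresponding multiple of $\cs_{i_0,j_0,k_0}$ are controlled, again by \eqref{osit} applied to $\cs_{i_0,j_0,k_0}$, since its coordinates and coefficient indices are contained in $[i_0,\max\{j_0,k_0\}]\subseteq [i,\max\{j,k,k'\}]$. Hence the bounds are preserved through every reduction step, proving the claim.

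The only delicate point I anticipate is the simultaneous coefficient bound: one must be careful that in the reduction against $\bfS^{(2)}_{\bff,\cF,\bfp}$ the quotient monomials $\bfY^{\bfalpha}/Y_{k_0}$ stay within range, which is ensured because $Y_{k_0}$ already lies in the range and the remaining factors come from the previous step's coefficients, handled by the inductive hypothesis. This is the analogue of the bookkeeping done in Lemma \ref{tool}, lifted from the first syzygies to the second, and no new phenomenon appears.
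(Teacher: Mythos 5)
Your proof is correct and takes essentially the same route the paper intends: the paper just asserts the lemma "follows straightforwardly by using Lemma \ref{tool} and \eqref{osit}," and your argument is precisely that, with the bookkeeping filled in. Tracking both the coordinate indices $(i_0,j_0,k_0)$ and the variable indices in the polynomial coefficients through each reduction step, and observing that a term $r\,\bfY^{\bfalpha}e_{(u,u')}$ can only be reduced by a $\cs_{i_0,j_0,k_0}$ with $(i_0,j_0)=(u,u')$ and $Y_{k_0}\mid\bfY^{\bfalpha}$, is exactly the right mechanism and is what keeps everything in the range $[i,\max\{j,k,k'\}]$.
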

Lemmas \ref{au1}, \ref{au2}, and \ref{yyavan} also have a straightforward generalization, which are given next.

\begin{lemma}\label{aau1}
With notation as in \eqref{gato}, if a  term of the form $rY_{t}$ with $r\in\rmR\setminus\{0\}$ appears in $q_{i,j,k,k'i_0, j_0, k_0}(\bfY),$ then the triplet $(i_0,j_0,k_0,t)$ is not admissible.
\end{lemma}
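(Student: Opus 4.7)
The plan is to adapt the argument of Lemma \ref{au1} to the second-level syzygies. Arguing by contradiction, suppose $(i_0, j_0, k_0, t)$ is admissible, and consider any step of the Division Algorithm (in the form of Remark \ref{usare}) that could, when computing \eqref{arripoa}, contribute a term of the shape $r' Y_t\, e_{(i_0, j_0, k_0)}$ (for some $r' \in \rmR$) to $q_{i,j,k,k',i_0,j_0,k_0}(\bfY)$. The goal is to show that no such step actually occurs, so that the total coefficient of $Y_t$ in $q_{i,j,k,k',i_0,j_0,k_0}(\bfY)$ is forced to vanish.

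First I would observe that every contribution to the $e_{(i_0, j_0, k_0)}$-component comes from a reduction modulo $\cs_{i_0, j_0, k_0}$. Since $\mbox{\rm lm}(\cs_{i_0, j_0, k_0}) = Y_{k_0}\, e_{(i_0, j_0)}$ by \eqref{juan}, the only such contributions involving the variable $Y_t$ arise at steps where the leading monomial of the current partial remainder equals $r' Y_t Y_{k_0}\, e_{(i_0, j_0)}$, and the multiplier $r' Y_t$ is applied to $\cs_{i_0, j_0, k_0}$.

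Next, the admissibility of $(i_0, j_0, k_0, t)$ gives $k_0 < t$ together with $k_0, t \in \{i_0+1, \ldots, M\} \setminus \{j_0 - 1, j_0\}$. In particular $(i_0, j_0, t)$ is itself an admissible triplet, so $\cs_{i_0, j_0, t} \in \bfS^{(2)}_{\bff,\cF,\bfp}$ has leading monomial $Y_t\, e_{(i_0, j_0)}$, which also divides $r' Y_t Y_{k_0}\, e_{(i_0, j_0)}$ (with multiplier $Y_{k_0}$). Since $\bfS^{(2)}_{\bff,\cF,\bfp}$ is sorted by the lex order on triplets, $(i_0, j_0, k_0) \prec (i_0, j_0, t)$; by the maximal-index rule of Remark \ref{usare}, the Division Algorithm would reduce using $\cs_{i_0, j_0, t}$ rather than $\cs_{i_0, j_0, k_0}$ at any such step. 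Consequently no step contributing a term in $Y_t$ to the $e_{(i_0, j_0, k_0)}$-component ever fires, and the coefficient of $Y_t$ in $q_{i,j,k,k',i_0,j_0,k_0}(\bfY)$ must be zero, contradicting the hypothesis that $rY_t$ with $r \neq 0$ appears.

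The main subtle point I would double-check is the first observation --- that every contribution to the $Y_t$-part of the $e_{(i_0, j_0, k_0)}$-component arises by the precise mechanism described --- since the rest is then a one-line consequence of admissibility and the ordering conventions. This observation is however immediate from the explicit shape of the Division Algorithm in Remark \ref{usare}: each reduction step contributes a single monomial term $cM$ to the coefficient of the divisor being used, where $M \cdot \mbox{\rm lm}(\text{divisor})$ equals the leading monomial being cancelled at that step.
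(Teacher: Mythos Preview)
Your proof is correct and follows essentially the same argument as the paper: both reduce by contradiction to a step of the Division Algorithm with leading term $r'Y_tY_{k_0}e_{(i_0,j_0)}$, observe that admissibility of $(i_0,j_0,k_0,t)$ forces $(i_0,j_0,t)$ to be admissible with $k_0<t$, and invoke the maximal-index rule of Remark~\ref{usare} to conclude that $\cs_{i_0,j_0,t}$ rather than $\cs_{i_0,j_0,k_0}$ would have been used. Your version is slightly more careful in noting that the coefficient of $Y_t$ is a sum of contributions from possibly several steps, all of which are ruled out by the same mechanism; the paper's proof leaves this implicit.
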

\begin{proof}
The presence of a term like that  implies that in the process of doing the division algorithm \eqref{preraw} we would have ran into a nonzero term of the form $rY_{t}Y_{{k_0}}e_{(i_0, j_0)}.$ From here, to produce the element $rY_{t}e_{(i_0,j_0,k_0)},$ we should  divide this term with  $\cs_{i_0,j_0,k_0}.$ It $(i_0, j_0, k_0,t)$ were admissible,  we could also divide it with $\cs_{i_0, j_0, t}.$ As $k_0<t,$ and due to the way we have designed the Division Algorithm in Remark \ref{usare} by choosing the maximal index in the family $\bfS^{(2)}_{\bff,\cF,\bfp},$ the division should be done with the latter, leading to a contradiction. This proves the claim. 
\end{proof}

\begin{lemma}\label{aau2}
For an admissible triplet $(i,j,k,k'),$ if  $\bfv_i,\,\bfv_j,\,\bfv_k$ and $\bfv_{k'}$ are all contained in one single cone of the fan, then $\cs_{i,j,k,k'}\in \langle Y_{1},\ldots, Y_{M}\rangle\rmR[\bfY]^{2{M-1\choose 3}}.$
\end{lemma}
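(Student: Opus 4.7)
The plan is to lift the argument of Lemma \ref{au2} one level up, tracking total $\bfY$-degree through the Division Algorithm. Writing
\[
\cs_{i,j,k,k'}=Y_{k'}e_{(i,j,k)}-Y_{k}e_{(i,j,k')}-\sum_{(i_0,j_0,k_0)}q_{i,j,k,k',i_0,j_0,k_0}(\bfY)\,e_{(i_0,j_0,k_0)}
\]
as in \eqref{gato}, the first two coordinates are visibly in $\langle Y_1,\ldots,Y_M\rangle$, so it suffices to show that each polynomial $q_{i,j,k,k',i_0,j_0,k_0}(\bfY)$ has no constant term.

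Consider the total-degree grading $\deg_Y$ on the ambient module $\rmR[\bfY]^{\binom{M-1}{2}}$ with $\deg_Y(Y_\nu)=1$ and $\deg_Y(e_{(u,v)})=0$. By Lemma \ref{toool}, every triplet $(i_0,j_0,k_0)$ appearing in \eqref{gato} satisfies $i\leq i_0,j_0,k_0\leq\max\{j,k,k'\}$, so the three vectors $\bfv_{i_0},\bfv_{j_0},\bfv_{k_0}$ lie inside the single cone that contains $\bfv_i,\bfv_j,\bfv_k,\bfv_{k'}$. Lemma \ref{au2} therefore applies to each such $\cs_{i_0,j_0,k_0}$ and tells us that every coordinate of $\cs_{i_0,j_0,k_0}$ in the basis $\{e_{(u,v)}\}$ is a polynomial of $\deg_Y\geq 1$; its leading coordinate is exactly the degree-one monomial $Y_{k_0}e_{(i_0,j_0)}$.

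Applying the same observation to $\cs_{i,j,k}$ and $\cs_{i,j,k'}$, the $S$-polynomial ${\bf S}_{i,j,k,k'}=Y_{k'}\cs_{i,j,k}-Y_{k}\cs_{i,j,k'}$ has every coordinate of $\deg_Y\geq 2$. The Division Algorithm of Remark \ref{usare} reduces this expression by repeatedly picking a leading coordinate $c(\bfY)e_{(i_0,j_0)}$ of degree $d\geq 2$ and subtracting $(c(\bfY)/Y_{k_0})\,\cs_{i_0,j_0,k_0}$. Since every coordinate of $\cs_{i_0,j_0,k_0}$ has $\deg_Y\geq 1$ and we multiply by a monomial of degree $d-1$, the newly introduced terms all have $\deg_Y\geq d$. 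Hence the invariant ``every coordinate has $\deg_Y\geq 2$'' persists throughout the reduction.

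Since $\bfS^{(2)}_{\bff,\cF,\bfp}$ is a Gr\"obner basis of $\mbox{\rm syz}(\bfS_{\bff,\cF,\bfp})$ by Theorem \ref{machiv}, the reduction terminates with remainder zero, yielding ${\bf S}_{i,j,k,k'}=\sum q_{i,j,k,k',i_0,j_0,k_0}(\bfY)\,\cs_{i_0,j_0,k_0}$. Each $q_{i,j,k,k',i_0,j_0,k_0}$ is the sum of the monomials $c(\bfY)/Y_{k_0}$ accumulated across all reduction steps against $\cs_{i_0,j_0,k_0}$, and each such $c(\bfY)$ has $\deg_Y\geq 2$ by the invariant above, so each contribution has $\deg_Y\geq 1$. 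Hence $q_{i,j,k,k',i_0,j_0,k_0}\in\langle Y_1,\ldots,Y_M\rangle$, as required. The delicate point is the invariance of the lower bound on $\deg_Y$ under reduction, which is where the single-cone hypothesis is essential: without it, some $\cs_{i_0,j_0,k_0}$ could carry a coordinate of $\deg_Y=0$ and break the estimate at that step.
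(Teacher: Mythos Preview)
Your proof is correct and follows essentially the same approach as the paper's: use Lemma \ref{au2} on $\cs_{i,j,k}$ and $\cs_{i,j,k'}$ to see that the $S$-polynomial ${\bf S}_{i,j,k,k'}$ has all coordinates in $\langle Y_1,\ldots,Y_M\rangle^2$, then track this lower bound on total $\bfY$-degree through the Division Algorithm, using that the leading terms of the divisors are single variables. If anything, your write-up is more careful than the paper's: the paper simply observes that the leading terms are degree-one monomials and calls the rest ``straightforward,'' whereas you explicitly invoke Lemma \ref{toool} to check that every triplet $(i_0,j_0,k_0)$ arising during reduction still lies in the common cone, so Lemma \ref{au2} applies to the divisors as well and the degree-$\geq 2$ invariant really does persist from step to step.
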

\begin{proof}
In the conditions of the hypothesis, we have that both $\cs_{i,j,k}$ and $\cs_{i,j,k'}$ have all their coefficients in  $\langle Y_{1},\ldots, Y_{M}\rangle$ thanks to Lemma \ref{au2}. So, we actually have that the coefficients of $Y_{{k'}}\cs_{i,j,k}-Y_{k}\cs_{i,j,k'}$ belong to  $\langle Y_{1},\ldots, Y_{M}\rangle^2.$ The claim now follows straightforwardly as in the division algorithm over elements of  $\bfS^{(2)}_{\bff,\cF,\bfp},$ all the leading terms of these elements are one of the variables $Y_{u},\,u=1,\ldots, M.$
\end{proof}
\begin{lemma}\label{yyavanp}
With notation as above, if a nonzero constant term of the form $r\in\rmR$  appears in the Taylor expansion of one of the polynomials $q_{i,j,k,k',i_0, j_0, k_0}(\bfY)$ from \eqref{gato}, and $\bff$ is strict, then there is an index $s\in\{2,\ldots, \ell-1\}$ depending on $i_0$ such that $r\in\langle p_{k_{s-1}}, p_{k_{s}}\rangle.$
\end{lemma}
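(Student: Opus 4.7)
The plan is to mimic the proof of Lemma \ref{yyavan}, extending it to the two-level division algorithm underlying the construction of $\cs_{i,j,k,k'}$. To begin, I would choose $s\in\{2,\ldots,\ell-1\}$ so that $\bfv_{i_0}\in C_s$. By $\Z^2$-homogeneity of $\cs_{i,j,k,k'}$, a nonzero constant term in $q_{i,j,k,k',i_0,j_0,k_0}(\bfY)$ forces the identity $\bfv_i+\bfv_j+\bfv_k+\bfv_{k'}=\bfv_{i_0}+\bfv_{j_0}+\bfv_{k_0}$. Lemma \ref{aau2} would then rule out all four vectors $\bfv_i,\bfv_j,\bfv_k,\bfv_{k'}$ lying in a single cone (otherwise every coefficient of $\cs_{i,j,k,k'}$ would belong to $\langle Y_1,\ldots,Y_M\rangle$), and Lemma \ref{toool} gives $i\leq i_0$. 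Writing $\bfv_i\in C_{s^*}$ with $s^*\leq s$, I split into the two cases (1) $s^*<s$, and (2) $s^*=s$ together with some $\bfv_m$, $m\in\{j,k,k'\}$, lying in $C_{s^{**}}$ for $s^{**}>s$.

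Next, I would trace how such a constant $r$ is produced: it appears precisely when the division algorithm reduces a term $r\cdot Y_{k_0}e_{(i_0,j_0)}$ in the current remainder against the head $Y_{k_0}e_{(i_0,j_0)}=\mbox{\rm lt}(\cs_{i_0,j_0,k_0})$. Such a term can originate either from multiplying $Y_{k'}$ (resp.\ $Y_k$) by a degree-zero coefficient occurring in the tail of $\cs_{i,j,k}$ (resp.\ $\cs_{i,j,k'}$), or from a previously invoked $\cs_{i'_0,j'_0,k'_0}\in\bfS^{(2)}_{\bff,\cF,\bfp}$ whose tail carries constants. In both situations Lemma \ref{yyavan} already constrains each participating constant to lie in an ideal of the form $\langle p_{k_{s'-1}},p_{k_{s'}}\rangle$, where $s'$ is the cone index of the first component of the relevant lower-level generator. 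In case (1), the chain of reductions must bridge $C_{s^*}$ (containing $\bfv_i$) and $C_s$ (containing $\bfv_{i_0}$); by strictness of $\bff$ this bridging introduces a factor $p_{k_{s-1}}$, so $r\in\langle p_{k_{s-1}}\rangle$. In case (2), the symmetric argument bridging $C_s$ and $C_{s^{**}}$ forces a factor $p_{k_s}$, so $r\in\langle p_{k_s}\rangle$. Either way $r\in\langle p_{k_{s-1}},p_{k_s}\rangle$.

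The main obstacle I anticipate is running this cone-crossing argument rigorously through the composition of two nested reduction algorithms, without losing the $p$-factor to accidental cancellations. I intend to handle this by maintaining, as an inductive invariant throughout the reduction of $Y_{k'}\cs_{i,j,k}-Y_k\cs_{i,j,k'}$ against $\bfS^{(2)}_{\bff,\cF,\bfp}$, the condition that every constant coefficient of every term $Y_{k^{\dagger}}e_{(i^{\dagger},j^{\dagger})}$ in the current remainder lies in the ideal $\langle p_{k_{s-1}},p_{k_s}\rangle$, with $s$ determined by the cone containing $\bfv_{i^{\dagger}}$. This invariant holds at initialization by applying Lemma \ref{yyavan} to the tails of $\cs_{i,j,k}$ and $\cs_{i,j,k'}$, and it is preserved at each reduction step since subtracting $r'\cdot\cs_{i'_0,j'_0,k'_0}$ can only introduce new constants of the form $r'$ times constants of $\cs_{i'_0,j'_0,k'_0}$, all already controlled by Lemma \ref{yyavan}. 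Propagating this invariant to the termination of the algorithm yields the desired containment for every constant $r$ produced in $q_{i,j,k,k',i_0,j_0,k_0}(\bfY)$.
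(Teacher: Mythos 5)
Your proposal follows the same strategy as the paper's own (very terse) argument: trace the two-level division algorithm producing $\cs_{i,j,k,k'}$, feed Lemma \ref{yyavan} into it, and observe that the relevant $p_{k_s}$-factors are governed by the cone containing $\bfv_{i_0}$. Both proofs are, at this level, sketches, and you are more explicit than the paper in trying to pin down exactly where the $p$-factors travel, via a stated inductive invariant. So structurally you and the authors are doing the same thing.

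That said, the invariant you propose is not quite preserved as stated, and that is worth flagging. Consider a reduction step against $\cs_{i'_0,j'_0,k'_0}$ in the case $i'_0<k'_0<j'_0$ of \eqref{mamn}: the tail contains $-Y_{i'_0}e_{(k'_0,j'_0)}$, so subtracting $r'\cs_{i'_0,j'_0,k'_0}$ from the remainder introduces a term $r'Y_{i'_0}e_{(k'_0,j'_0)}$. Your invariant then demands $r'\in\langle p_{k_{s''-1}},p_{k_{s''}}\rangle$ with $s''$ the cone index of $\bfv_{k'_0}$, whereas the hypothesis of the reduction step only provides $r'\in\langle p_{k_{s'-1}},p_{k_{s'}}\rangle$ with $s'$ the cone index of $\bfv_{i'_0}$. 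Since $i'_0<k'_0$, these cones can differ, so the invariant fails for that particular term. The way out (which you should add) is that this offending term cannot itself contribute to a quotient: $(k'_0,j'_0,i'_0)$ is never an admissible triplet because $i'_0<k'_0$, so $Y_{i'_0}e_{(k'_0,j'_0)}$ is not a leading term of any $\cs_{a,b,c}\in\bfS^{(2)}_{\bff,\cF,\bfp}$ and is never reduced into a quotient. Thus the invariant should be restricted to terms $rY_{k^\dagger}e_{(i^\dagger,j^\dagger)}$ with $(i^\dagger,j^\dagger,k^\dagger)$ admissible; only such terms can feed a constant into a $q_{i,j,k,k',i_0,j_0,k_0}$, and for these the invariant does propagate because the first index of the coordinate stays equal to $i'_0$ in the admissible tail contributions (the $-Y_{j'_0}e_{(i'_0,k'_0)}$ term in the case $i'_0<j'_0<k'_0$, and the products of $r'$ with constants of $q_{i'_0,j'_0,k'_0,u,u'}$ controlled by Lemma \ref{yyavan}). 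With this correction the rest of your argument goes through and matches the paper's conclusion.
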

\begin{proof}
The proof follows again as in Lemma \eqref{yyavan}, by unravelling the Division Algorithm to produce 
the polynomial ${\bf S}_{i,j,k,k'}$ defined in \eqref{arripoa}. It is easy to see that all the leading terms during this Division Algorithm either belong to the ideal $\langle Y_1,\ldots, Y_M\rangle$ (and hence are not under the conditions of this hypothesis), or are multiple of some $p_{k_s}$ for $s\in\{1,\ldots, \ell-1\}.$ By using now Lemma \ref{yyavan}, it is easy to conclude that the two possible indices corresponding to $q_{i,j,k,k',i_0, j_0, k_0}(\bfY)$  are those determined by the cone where $\bfv_{i_0}$ lies. This concludes with the proof of the Lemma.
\end{proof}

 By applying now all these claims, we can reproduce mutatis mutandis the proof of Theorem \ref{machiv}.
\begin{theorem}\label{machiv2}
The set  $\bfS^{(3)}_{\bff,\cF, \bfp}$ is a minimal Gr\"obner basis of $\mbox{\rm syz}\big(\bfS^{(2)}_{\bff,\cF, \bfp}\big)$ for the induced order $\prec_{\cS^{(2)}}.$
If  $|\cF|$ is a proper convex polyhedral cone, $\bff$ is strict with respect to $\cF,$ none of the  $p_{k_i}$'s is  a zero divisor,  and  for $1\leq i\leq j\leq \ell-1,\,\langle p_{k_i}, p_{k_j}\rangle\neq\rmR,$ then it is also a minimal set of generators of this submodule.
\end{theorem}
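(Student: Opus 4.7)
The plan is to mimic the strategy used in the proof of Theorem~\ref{machiv}, using Lemmas~\ref{toool}, \ref{aau1}, \ref{aau2} and \ref{yyavanp} in place of Lemmas~\ref{tool}, \ref{au1}, \ref{au2} and \ref{yyavan}. Concretely, I first invoke Schreyer's Theorem~\ref{3713} applied to the Gr\"obner basis $\bfS^{(2)}_{\bff,\cF,\bfp}$ (which has been sorted lexicographically on its indexing triples $(i,j,k)$ as above) to get that the $S$-polynomial syzygies between pairs of elements of $\bfS^{(2)}_{\bff,\cF,\bfp}$ form a Gr\"obner basis of $\mbox{\rm syz}(\bfS^{(2)}_{\bff,\cF,\bfp})$ for the induced order $\prec_{\cS^{(2)}}$. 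Thanks to \eqref{juan} and \eqref{liding}, for such a pair of admissible triples the corresponding leading term is a monomial in the $\bfY$'s of total degree at most $2$ times a basis element $e_{(i,j,k)}$. A case analysis analogous to the one performed around \eqref{oxis} then shows that the only relevant leading monomials are those of the form $Y_{k'}\, e_{(i,j,k)}$ with $(i,j,k,k')$ admissible (the cases involving $Y_{j-1}$ or a smaller $Y_{k''}$ either are absorbed by an already-present leading term or correspond to pairs of adjacent vectors, so produce no valid $\cs$). This identifies $\bfS^{(3)}_{\bff,\cF,\bfp}$ as a Gr\"obner basis, and minimality of the basis is immediate since removing any $\cs_{i,j,k,k'}$ would leave $Y_{k'}\,e_{(i,j,k)}$ uncovered in the leading module.

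For the second part, assume the extra hypotheses and suppose towards a contradiction that some $\cs_{i,j,k,k'}$ lies in the $\rmR[\bfY]$-submodule generated by the remaining elements of $\bfS^{(3)}_{\bff,\cF,\bfp}$, so
\[
\cs_{i,j,k,k'}=\sum_{(i',j',k',k'')} Q_{i',j',k',k''}(\bfY)\,\cs_{i',j',k',k''},
\]
with the $Q$'s $\Z^2$-homogeneous of the appropriate degree. Reading off the $e_{(i,j,k)}$-coordinate and recalling that by Lemma~\ref{toool} only triples $(i',j',k',k'')$ with $i'\le i<j\le\max\{j',k',k''\}$ can contribute, I get an expression of the form
\[
Y_{k'}=-Q_{i,j,k',k}(\bfY)\,Y_{k'}+\sum_{(i'_1,j'_1,k'_1,k''_1)}Q_{i'_1,j'_1,k'_1,k''_1}(\bfY)\,q_{i'_1,j'_1,k'_1,k''_1,i,j,k}(\bfY),
\]
where the first term appears only if $(i,j,k',k)$ is itself admissible. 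The plan is now to set $Y_u\mapsto 0$ for every $u\ne k'$; by $\Z^2$-homogeneity this collapses every $Q$ and every $q$ to a single monomial $r\,Y_{k'}^N$ with $r\in\rmR$, so the identity becomes a one-variable polynomial relation in $\rmR[Y_{k'}]$. Matching the linear coefficient in $Y_{k'}$ gives $1+r_{i,j,k',k}=\sum r_{i'_1,j'_1,k'_1,k''_1}\cdot r_{i'_1,j'_1,k'_1,k''_1,i,j,k}$ where only terms with total $Y_{k'}$-exponent $1$ survive on the right.

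The main obstacle, exactly as in the proof of Theorem~\ref{machiv}, is to force the right-hand side into a proper ideal $\langle p_{k_{s-1}},p_{k_s}\rangle$ of $\rmR$, where $s$ is the index of the cone containing $\bfv_i$. Lemma~\ref{aau1} rules out the case in which the surviving monomial comes from the $q$-factor (that would require $(i,j,k,k')$ to be non-admissible). Hence the surviving monomial sits in the $Q$-factor and the constant term of $q_{i'_1,j'_1,k'_1,k''_1,i,j,k}$ must be nonzero; by Lemma~\ref{aau2} this forces $\{\bfv_{i'_1},\bfv_{j'_1},\bfv_{k'_1},\bfv_{k''_1}\}$ not to lie in a single cone, and then Lemma~\ref{yyavanp} locates that constant in $\langle p_{k_{s-1}},p_{k_s}\rangle$. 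So $1+r_{i,j,k',k}\in\langle p_{k_{s-1}},p_{k_s}\rangle$. To finish, if $(i,j,k',k)$ is admissible, I repeat the argument on the $e_{(i,j,k')}$-coordinate after setting $Y_u\mapsto 0$ for $u\ne k$; the analogue of \eqref{last1} shows that the $Y_k\,e_{(i,j,k')}$ term does not arise from $\cs_{i,j,k,k'}$ itself, so I obtain $-r_{i,j,k',k}\in\langle p_{k_{s-1}},p_{k_s}\rangle$ as well, whence $1\in\langle p_{k_{s-1}},p_{k_s}\rangle$, contradicting the hypothesis. If $(i,j,k',k)$ is not admissible, then $r_{i,j,k',k}=0$ and the preceding step already gives $1\in\langle p_{k_{s-1}},p_{k_s}\rangle$, again a contradiction. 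This concludes the proof.
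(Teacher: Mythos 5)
Your proof is essentially correct and follows the same general strategy as the paper (reduce to the $e_{(i,j,k)}$ coordinate, specialize $Y_u\mapsto 0$ for $u\neq k'$, then push the constants into $\langle p_{k_{s-1}},p_{k_s}\rangle$ via the higher-dimensional Lemmas~\ref{toool}, \ref{aau1}, \ref{aau2}, \ref{yyavanp}). However, you missed a simplification that the paper points out, which makes the whole second half of your argument unnecessary. By definition an admissible quadruple $(i,j,k_1,k_2)$ must satisfy $k_1<k_2$, so for $k<k'$ the quadruple $(i,j,k',k)$ is \emph{never} admissible. Consequently the term $-Q_{i,j,k',k}(\bfY)\,Y_{k'}$ in your expression never appears, i.e.\ $r_{i,j,k',k}=0$ always. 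You do handle this as a sub-case at the very end, but it is the only case that occurs. Once you notice this, the identity coming from the $e_{(i,j,k)}$-coordinate alone already reads $Y_{k'}=\sum Q_{i'_1,j'_1,k'_1,k''_1}\cdot q_{i'_1,j'_1,k'_1,k''_1,i,j,k}$, and after specialization and applying Lemmas~\ref{aau1}, \ref{aau2}, \ref{yyavanp} you get $1\in\langle p_{k_{s-1}},p_{k_s}\rangle$ directly, contradicting the hypothesis. There is no need to repeat the argument on the $e_{(i,j,k')}$-coordinate, so the analogue of the symmetric argument via \eqref{last1} that was required in the proof of Theorem~\ref{machiv} does not arise here.

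Two minor remarks. First, in your sketch of the Gr\"obner-basis part you say the leading term of the $S$-syzygy between two elements of $\bfS^{(2)}_{\bff,\cF,\bfp}$ is a monomial of total degree at most $2$ times a basis element; in fact the lcm in Schreyer's formula is nonzero only for pairs $\cs_{i,j,k}$, $\cs_{i,j,k'}$ sharing the same $(i,j)$, and then the leading term is exactly $Y_{\max(k,k')}\,e_{(i,j,\min(k,k'))}$, always of $\bfY$-degree $1$. This is a simpler observation than the case analysis around \eqref{oxis} needed for Theorem~\ref{machiv}. Second, since $(i,j,k',k)$ is never admissible, there is no analogue of \eqref{last1} to invoke at all; your appeal to it is vacuous.
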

\begin{proof}
The fact that this set is a minimal Gr\"obner basis follows directly from Theorem \ref{3713} and comparing the leading terms of all possible syzygies and those coming in $\bfS^{(3)}_{\bff,\cF, \bfp}.$

To prove minimal generation,  we proceed as in the proof of Theorem \ref{machiv}:  from an expression like
$\cs_{i,j,k,k'}=\sum_{i_1,j_1,k_1,k_1'}q_{i_1,j_1,k_1,k_1'}(\bfY)\cs_{i_1,j_1,k_1,k_1'},$ we pick the coordinate $e_{i,j,k}$ in both sides to get
\begin{equation}\label{3avobe}
Y_{k'}= \sum_{i_1,j_1,k_1,k_1'}q_{i_1,j_1,k_1,k_1'}(\bfY)q_{i_1,j_1,k_1,k_1',i,j,k}(\bfY) \end{equation}
(note that in this case there cannot be a term of the form $-q_{i,j,k',k}(\bfY)Y_{k'}$ as in \eqref{avobe} due to the fact that $(i,j,k',k)$ cannot be an admissible element as $k<k'$, so the proof gets actually simpler in this case). Now we set $Y_u\mapsto0$ for $u\neq k'$, and proceed mutatis mutandis the proof of Theorem \ref{machiv} by using now Lemmas \ref{toool}, \ref{aau1}, \ref{aau2}, and \ref{yyavanp}.
\end{proof}

\medskip
In general, for $s\geq3,$ given the map
$\varphi_s:\rmR[\bfY]^{s{M-1\choose s+1}}\to\rmR[\bfY]^{(s-1){M-1\choose s}}$ where the canonical basis of $\rmR[\bfY]^{s{M-1\choose s+1}}$ is denoted with $e_{i,j,k_1,\ldots, k_{s-1}}$ with $i<j-1,\,i<k_1<k_2<\ldots<k_{s-1},$ and no $k_t\in\{j-1,j\},$ and we have $e_{i,j,k_1,\ldots, k_{s-1}}\mapsto\cs_{i,j,k_1,\ldots, k_{s-1}}$ with $\mbox{\rm lt}(\cs_{i,j,k_1,\ldots, k_{s-1}})= Y_{k_{s-1}}e_{i,j,k_1,\ldots,k_{s-2}}\in\rmR[\bfY]^{(s-1){M-1\choose s}}.$ For a given $k_s>k_{s-1}$ such that $k_s\notin\{j-1,j\},$ we can proceed analogously as done in \eqref{arripoa} and \eqref{gato}, to produce $\cs_{i,j,k_1,\ldots, k_s}\in\mbox{ker}(\varphi_s)$  such that 
$\mbox{\rm lt}(\cs_{i,j,k_1,\ldots, k_{s}})= Y_{k_{s}}e_{i,j,k_1,\ldots,k_{s-1}}.$ Denote with $\bfS^{(s+1)}_{\bff,\cF,\bfp}$ the set of all such elements $\cs_{i,j,k_1,\ldots, k_s},$ which by Lemma \ref{comb} will have cardinality $(s+1){M-1\choose s+2}.$  We sort as usual this set by using the lexicographic order on $(i,j,k_1,\ldots, k_{s-1}).$

Straightforward generalizations of Lemmas \ref{toool}, \ref{aau1}, \ref{aau2}, and \ref{yyavanp} imply the following higher dimensional result.
\begin{theorem}\label{machiv3}
The set  $\bfS^{(s+1)}_{\bff,\cF, \bfp}$ is a minimal Gr\"obner basis of $\mbox{\rm syz}\big(\bfS^{(s)}_{\bff,\cF, \bfp}\big)$ for the induced order $\prec_{\cS^{(s)}}.$
If  $|\cF|$ is a proper convex polyhedral cone, $\bff$ is strict with respect to $\cF,$ none of the  $p_{k_i}$'s is  a zero divisor,  and  for $1\leq i\leq j\leq \ell-1,\,\langle p_{k_i}, p_{k_j}\rangle\neq\rmR,$ then it is also a minimal set of generators of this submodule.
\end{theorem}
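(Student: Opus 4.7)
The plan is to proceed by induction on $s$, with the base cases $s=1$ (Theorem \ref{machiv}) and $s=2$ (Theorem \ref{machiv2}) already in hand. At each stage the argument splits into two parts: establishing the Gr\"obner basis property via Schreyer's theorem, and then upgrading this to minimal generation through a coordinate-by-coordinate specialization that exploits the hypothesis $\langle p_{k_i}, p_{k_j}\rangle \neq \rmR$.

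For the Gr\"obner basis claim, I would invoke the inductive hypothesis that $\bfS^{(s)}_{\bff,\cF,\bfp}$ is a minimal Gr\"obner basis of $\mbox{syz}\big(\bfS^{(s-1)}_{\bff,\cF,\bfp}\big)$ with $\mbox{lt}(\cs_{i,j,k_1,\ldots,k_{s-1}}) = Y_{k_{s-1}}\,e_{i,j,k_1,\ldots,k_{s-2}}$ and $\mbox{lc}(\cs_{i,j,k_1,\ldots,k_{s-1}}) = 1$. Theorem \ref{3713} then yields a Gr\"obner basis of the syzygy module from the $S$-polynomials. Two leading terms $Y_{k_{s-1}}e_{i,j,k_1,\ldots,k_{s-2}}$ and $Y_{k'_{s-1}}e_{i',j',k'_1,\ldots,k'_{s-2}}$ share a common multiple only when the underlying basis vectors coincide; this forces $(i,j,k_1,\ldots,k_{s-2}) = (i',j',k'_1,\ldots,k'_{s-2})$ and $k_{s-1} \neq k'_{s-1}$, so after relabeling the indices in increasing order the essential syzygies are indexed exactly by admissible tuples $(i,j,k_1,\ldots,k_s)$ with $k_{s-1} < k_s$. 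The constructed element $\cs_{i,j,k_1,\ldots,k_s}$ in \eqref{gato} (with its obvious higher-dimensional analogue) has leading term $Y_{k_s}e_{i,j,k_1,\ldots,k_{s-1}}$, confirming both the shape and the count $(s+1)\binom{M-1}{s+2}$ from Lemma \ref{comb}. Minimality of the Gr\"obner basis is immediate: the leading term of any element is not divisible by the leading term of any other, since removing one destroys its contribution to the leading module.

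For the minimal generation statement, I would mimic the endgame of Theorem \ref{machiv2} verbatim. Assume toward a contradiction that
\begin{equation*}
\cs_{i,j,k_1,\ldots,k_s} = \sum_{(i',j',k'_1,\ldots,k'_s)} q_{i',j',k'_1,\ldots,k'_s}(\bfY)\,\cs_{i',j',k'_1,\ldots,k'_s}.
\end{equation*}
Extract the $e_{i,j,k_1,\ldots,k_{s-1}}$ coordinate on both sides and then specialize $Y_u \to 0$ for every $u \neq k_s$. The $\Z^2$-grading together with the generalized Lemma \ref{toool} ensures that only a restricted set of triples contributes, and each surviving summand reduces to a product of two monomials in the single variable $Y_{k_s}$. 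Matching the degree $Y_{k_s}$ on the left, the generalized Lemma \ref{aau1} rules out contributions where the factor coming from $q_{i',j',k'_1,\ldots,k'_s,i,j,k_1,\ldots,k_{s-1}}$ carries the $Y_{k_s}$, so the $Y_{k_s}$ must come from $q_{i',j',k'_1,\ldots,k'_s}$ while the companion factor is a constant $r \in \rmR$. The generalized Lemma \ref{aau2} then forces the vectors $\bfv_{i'},\bfv_{j'},\bfv_{k'_1},\ldots,\bfv_{k'_s}$ not all to lie in a single cone, and the generalized Lemma \ref{yyavanp} places the constant $r$ in $\langle p_{k_{s-1}^*},\,p_{k_{s^*}}\rangle$, where $s^*$ indexes the cone containing $\bfv_i$. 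Summing all contributions yields $1 \in \langle p_{k_{s^*-1}},\,p_{k_{s^*}}\rangle$, contradicting the hypothesis.

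The main obstacle is not any single step but rather the verification that the four auxiliary lemmas do generalize without extra hypotheses. Lemma \ref{toool} generalizes straightforwardly by induction, since each reduction step in the construction of $\cs_{i,j,k_1,\ldots,k_s}$ only introduces basis elements indexed by tuples lying in the cone generated by $\bfv_i,\bfv_j,\bfv_{k_1},\ldots,\bfv_{k_s}$. Lemmas \ref{aau1} and \ref{aau2} generalize because the leading term of every element of $\bfS^{(s)}_{\bff,\cF,\bfp}$ is a single variable times a basis element with leading coefficient $1$, so the division algorithm of Remark \ref{usare} behaves identically across levels. The most delicate is Lemma \ref{yyavanp}: one must track which $p_{k_t}$'s appear as constant coefficients through iterated reductions, but an induction on $s$ shows that the constraint $r \in \langle p_{k_{s-1}^*},\,p_{k_{s^*}}\rangle$ with $s^*$ determined by the cone containing $\bfv_{i_0}$ is preserved by each successive $S$-polynomial reduction, because new constant coefficients are introduced only through the innermost application of the Division Algorithm against $\bfS_{\bff,\cF,\bfp}$, and those were already controlled by the original Lemma \ref{yyavan}.
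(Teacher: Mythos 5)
Your proposal is correct and follows essentially the same route the paper intends: the paper disposes of Theorem~\ref{machiv3} in a single sentence, asserting that the straightforward generalizations of Lemmas~\ref{toool}, \ref{aau1}, \ref{aau2}, and \ref{yyavanp} carry the argument of Theorem~\ref{machiv2} to every level $s$, and you have spelled out exactly this recursion (Schreyer's theorem for the Gr\"obner basis statement, then coordinate extraction, specialization $Y_u\mapsto 0$ for $u\neq k_s$, and the four lemmas to reach the contradiction $1\in\langle p_{k_{s^*-1}},p_{k_{s^*}}\rangle$). The one point worth making explicit, which you leave implicit, is the observation recorded parenthetically in the proof of Theorem~\ref{machiv2}: for $s\geq 2$ the ``self-term'' $-q_{i,k,j}(\bfY)Y_k$ that complicated equation~\eqref{avobe} in Theorem~\ref{machiv} cannot occur, because the tuple obtained by swapping the last two entries fails the admissibility condition $k_{s-1}<k_s$; this is precisely why the higher levels are \emph{simpler} than the $\varphi_1$ case, and it is what lets your ``matching the degree of $Y_{k_s}$'' step go directly to $1\in\langle p_{k_{s^*-1}},p_{k_{s^*}}\rangle$ rather than to an auxiliary relation on $1+r_{i,k,j}$.
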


Now we are in position to prove our main Theorem stated in the Introduction.
\begin{proof}[Proof of Theorem~\ref{mresult}]
The ring $\rmR[\bfY]$ is *-local with respect to the ideal $(\fm, \bfY)$, because $(\fm, \bfY)$ is a maximal ideal which is $\Z^2$-graded, and therefore it is also $\Z$-graded under the induced grading. According to the comments following Example~1.5.14 in~\cite{BH}, to have a minimal graded free resolution over $\rmR[\bfY]$, we need to check that  the kernel of $\varphi_0$ is contained in $(\fm, \bfY),$ and also that for each $j\geq1,\,\mbox{ker}(\varphi_j)\subset(\fm, \bfY)\rmR[\bfY]^{j{M-1\choose j+1}}.$ All these  follow from Theorems~\ref{mt}, \ref{machiv}, \ref{machiv2} and \ref{machiv3}.
\end{proof}

An immediate corollary of this result is
\begin{corollary}
\label{pd}
Under the notations of Theorem~\ref{mresult}, the projective dimension of $\cB_{\bff,\cF,\bfp}$ over $\rmR[\bfY]$ is $M-2$, if $(\rmR, \fm)$ is local, $\bff$ is strict with respect to $\cF,$ none of the $p_i$'s is a zero divisor in $\rmR,$ and all of them belong to $\fm$.

\end{corollary}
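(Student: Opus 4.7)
The plan is to derive Corollary~\ref{pd} as a direct consequence of Theorem~\ref{mresult}, so the main task is simply to verify that every hypothesis appearing in that theorem is in force under the present assumptions. First, I would observe that any local ring $(\rmR,\fm)$, equipped with the trivial $\Z$-grading, is $*$-local, and hence $\rmR[\bfY]$ is $*$-local with maximal graded ideal $(\fm,\bfY)$ under the induced $\Z$-grading, exactly as noted in the paragraph preceding Theorem~\ref{mresult}. Strictness of $\bff$ is assumed by hypothesis, and since each $p_i$ lies in $\fm$ and $\fm \neq \rmR$, the auxiliary condition $\langle p_{k_i}, p_{k_j}\rangle \neq \rmR$ required for the minimality statements in Theorems~\ref{mt}, \ref{machiv}, \ref{machiv2} and \ref{machiv3} is automatic. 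Together with the non-zerodivisor assumption on each $p_i$, this puts all the minimality conditions of Theorem~\ref{mresult} at our disposal.

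Having verified the hypotheses, I would conclude that the resolution \eqref{eres} is a bona fide minimal $\Z$-graded free resolution of $\cB_{\bff,\cF,\bfp}$ over the $*$-local ring $\rmR[\bfY]$. The next step is to invoke the standard principle (see for instance~\cite{BH}, just after Example~1.5.14) that the length of a minimal graded free resolution over a $*$-local ring coincides with the projective dimension of the resolved module. By inspection of \eqref{eres}, the rightmost nonzero free term is $\rmR[\bfY]^{M-2}$ sitting in homological degree $M-2$, so the projective dimension equals $M-2$.

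The only point requiring a brief sanity check is that the resolution actually terminates at that slot and does not continue further; this follows from the general shape $\varphi_j:\rmR[\bfY]^{j\binom{M-1}{j+1}} \to \rmR[\bfY]^{(j-1)\binom{M-1}{j}}$, whose source has rank zero as soon as $j+1 > M-1$, i.e.\ whenever $j \geq M-1$. In summary, no serious obstacle arises: the corollary is an immediate bookkeeping consequence of the work already packaged into Theorem~\ref{mresult}, and the proof amounts to checking the hypothesis list and quoting the standard characterization of projective dimension via minimal graded free resolutions.
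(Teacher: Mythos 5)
Your proof is correct and follows exactly the route the paper takes implicitly: Theorem~\ref{mresult} supplies the minimal graded free resolution, the hypotheses of the Corollary (local $\Rightarrow$ $*$-local, $p_i\in\fm$ $\Rightarrow$ $\langle p_{k_i},p_{k_j}\rangle\neq\rmR$) are verified, and the length of a minimal graded free resolution over a $*$-local ring is the projective dimension. Your sanity check that the complex stops precisely when $j\binom{M-1}{j+1}=0$, i.e.\ at $j=M-2$ with rank $(M-2)\binom{M-1}{M-1}=M-2$, is exactly what makes the conclusion complete.
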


\begin{example}
\label{phii2}
We return to the Example~\ref{ff23} and continue Example~\ref{phii1} with the computation of the kernel of $\varphi_2: \rmR [\bfY]^{8} \to \rmR[\bfY]^ 6, $ given by $e_{i, j, k} \to \cs_{i,j, k}$, where $(i,j,k) \in S_1$.
We need to find the set $S_2= \{ (i,j,k,k'): 1 \leq i < j-1 \leq 5, k < k' \in \{i+1, \ldots, 5\} \setminus \{j-1, j\}\}$, see Definition~\ref{sets}. One gets $S_2 =\{ (1,3,4,5), (1,4,2,5), (1,5,2,3) \}$.


To compute $S$-polynomials here, it is better to write down explicitly ``matrix'' of the syzygies $\cs_{i,j,k}:$ we will encode in the following $8\times 6$ matrix the coordinates of the the syzygies in \eqref{cozik} in the basis $\{e_{1,3},e_{1,4},e_{1,5},e_{2,4},e_{2,5},e_{3,5}\}:$
$$\left(\begin{array}{cccccc}
Y_4&-Y_3&0&-Y_2&0&0\\
Y_5&0&-Y_3&p Y_4&Y_2&0\\
-p&Y_2&0&-Y_1&0&0\\
0&Y_5&-Y_4&0&p&0\\
0&-pY_4&Y_2&0&-Y_1&0\\
0&-Y_4^2&Y_3&-pY_4&0&-Y_1\\
0&0&0&Y_5&-Y_4&p\\
0&0&0&-Y_4^2&Y_3&-Y_2
\end{array}\right).
$$
This matrix has rank $5$, and computing explicitly a basis of its kernel, we get a basis of $3$ elements. From here it is easy to deduce the higher syzygies:
$$\begin{array}{ccl}
\cs_{1,3,4,5} &=&Y_5 e_{1,3,4}-Y_4 e_{1,3,5} +Y_3e_{1,4,5}-Y_2 e_{2,4,5}- p e_{2,5,3}\\
\cs_{1,4,2,5} &=& Y_5 e_{1,4,2}-Y_2 e_{1,4,5}+p e_{1,3,5}-Y_4 e_{1,5,2}+ p e_{1,5,3}+Y_1 e_{2,4,5}\\
\cs_{1, 5,2,3} &=& Y_3 e_{1,5,2}-Y_2 e_{1,5,3} +Y_1 e_{2,5,3}-p Y_4 e_{1,3,4} - Y_4^2 e_{1,4,2}.
\end{array}$$

Alternatively, we can compute these relations by hand. For $(1,3,4,5) \in S_2$ we need to consider the $S$-polynomial between $\cs_{1,3,4}$ and $\cs_{1,3,5}$ which is $Y_5 \cs_{1, 3, 4} - Y_4 \cs_{1, 3, 5} = -Y_3Y_5e_{1,4} +Y_2 Y_5 e_{2,4} +Y_3Y_4e_{1,5} -py_4^2e_{2,4} -Y_2Y_4e_{2,5}.$ The leading term in this expression is $-Y_3Y_5e_{1,4}$ so we can write  $Y_5 \cs_{1, 3, 4} - Y_4 \cs_{1, 3, 5}= -Y_3\cs_{1, 4, 5} + pY_3e_{2,5} +Y_2Y_5e_{2,4} -pY_4^2e_{2,4}-Y_2Y_4e_{2,5}$. The next leading term is now $-Y_2Y_4e_{2,5}$, so we need to use $\cs_{2, 4, 5}$ which leads to $Y_5 \cs_{1,3,4}-Y_4 \cs_{1,3,5} =Y_3\cs_{1,4,5}-Y_2 \cs_{2,4,5}- p \cs_{2,5,3} $. This in turn gives the relation:
$$\cs_{1,3,4,5} =Y_5 e_{1,3,4}-Y_4 e_{1,3,5} +Y_3e_{1,4,5}-Y_2 e_{2,4,5}- p e_{2,5,3}.$$ The rest can be obtained similarly.

We can now write the minimal resolution of $\cB_{\rmR, p}(3,2)$ over $\rmR[Y_1, \ldots, Y_5] = \rmR[\bfY]$ as

$$0 \to \rmR[\bfY]^3 \stackrel{\varphi_3}\to \rmR[\bfY]^8 \stackrel{\varphi_2}\to \rmR[\bfY]^6 \stackrel{\varphi_1}\to \rmR[\bfY] \stackrel{\varphi_0}\to \cB_{\rmR, p}(3,2) \to 0$$ where the matrix corresponding to $\varphi_2$ with respect to the bases
$\{e_{1,3},e_{1,4},e_{1,5},e_{2,4},e_{2,5},e_{3,5}\}$ and  $\{ \cs_{1,3, 4},\cs_{1,3, 5}, \cs_{1, 4, 2}, \cs_{1,4, 5}, \cs_{1,5, 2}, \cs_{1,5,3}, \cs_{2, 4, 5}, \cs_{2, 5, 3}\}$
is

$$\left(\begin{array}{cccccc}
Y_4&-Y_3&0&-Y_2&0&0\\
Y_5&0&-Y_3&p Y_4&Y_2&0\\
-p&Y_2&0&-Y_1&0&0\\
0&Y_5&-Y_4&0&p&0\\
0&-pY_4&Y_2&0&-Y_1&0\\
0&-Y_4^2&Y_3&-pY_4&0&-Y_1\\
0&0&0&Y_5&-Y_4&p\\
0&0&0&-Y_4^2&Y_3&-Y_2
\end{array}\right).
$$ and the matrix corresponding to $\varphi_3$ with respect to the bases $\{ \cs_{1,3, 4},\cs_{1,3, 5}, \cs_{1, 4, 2}, \cs_{1,4, 5}, \cs_{1,5, 2}, \cs_{1,5,3}, \cs_{2, 4, 5}\}$
and $\{ \cs_{1,3,4,5}, \cs_{1,4,2,5}, \cs_{1,5,2,3} \}$ is

$$\left(\begin{array}{cccccccc}
Y_5&-Y_4&0&-Y_3&0&0&-Y_2&p\\
0&p&Y_5&-Y_2& -Y_4& p & Y_1&0\\
-pY_4& 0 &-Y_4^2&0& Y_3 &-Y_2& 0&Y_1
\end{array}\right).
$$

\end{example}

\bigskip
\section{The case $|\cF|=\R^2$}\label{6}
We conclude this paper by showing some examples where the support of the fan is the whole plane, and our results do not hold in general in this case.
\par
We start with the simplest situation which is $\bfv_1=(1,0),\,\bfv_2=(0,1),\,\bfv_3=(-1,-1).$   Every pair of these vectors are adjacent, and there are three cones in this fan. So, the set $\cS_{\bff,\cF,\bfp}$ from Definition \ref{syzygy} is empty, and hence according to Theorem \ref{mt}, $\varphi_0$ should be an isomorphism.  However, it is straightforward to verify that $\langle Y_1Y_2Y_3-\bfp^{\bff(\bfv_1)+\bff(\bfv_2)+\bff(\bfv_3)}\rangle=\mbox{ker}(\varphi_0),$ so that statement cannot be true. Note also that in this example the only generator of the kernel does not have total degree $2$ as in our case of study..

Still for this example we have  a resolution like \eqref{eres}, so one may wonder whether a statement like Theorem \ref{mresult} holds, but with different maps $\varphi_j$ than those defined in the text.  In principle, the number of non adjacent relations changes as $\bfv_1$ and $\bfv_M$ are adjacent in this case. So there is already one syzygy less to consider of this type, but we do not know if this is the only difference.  For instance,  set $\rmR:=\Q[p_1, p_2, p_3]$ where each $p_i$ is an indeterminate, $1\leq i\leq 3,$  and use the data $\bfv_1=(1,0),\bfv_2=(1,1),\bfv_3=(0,1),\bfv_4=(-1,0),\bfv_5=(-1,-1),\bfv_6=(0,-1),$ and $f_1(x,y)=|y|,\,f_2(x,y)=|x-y|,$ and $f_3(x,y)=|x|.$ Note that ${\bf f}=(f_1,f_2,f_3)$ is strict. 
By computing explicitly the kernel of $\varphi_0$ we get the following $9$ non adjacent relations:
$$\begin{array}{l}
 Y_1Y_3-p_2^2Y_2,\
  Y_1Y_4-p_2^2p_3^2,\
Y_1Y_5 -p_3^2Y_6,\
Y_2Y_4-p_3^2Y_3, 
  Y_2Y_5- p_1^2p_3^2, \
  Y_2Y_6-     p_1^2Y_1,  \\
Y_3Y_5- p_1^2Y_4,\  
      Y_3Y_6- p_1^2p_2^2,
Y_4Y_6-p_2^2Y_5.
\end{array}$$
But note that the leading monomials of these polynomials in the lexicographic order are not always a product of two indeterminates. For instance if we use the lexicographic order $Y_1\prec\ldots\prec Y_6$, the leading term of of $Y_1Y_5 -p_3^2Y_6$ would be $-p_3^2Y_6$. If we reverse the lexicographic order, then we get that the leading term of $Y_2Y_6-p_1^2Y_1$ is actually $-p_1^2Y_1.$ This shows the further ideas are needed in order to explore this situation.

\bigskip
\section*{Acknowledgements}
All our computations and experiments were done with the aid of the softwares {\tt Macaulay 2}(\cite{mac}) and {\tt Mathematica} (\cite{math}). The last author thanks Sandra Spiroff for discussions related to the second section of the paper. Part of this work was done while the second author and third author were visiting Georgia State University and, respectively, Universitat de Barcelona. They are grateful for the support and working atmosphere provided by these institutions.
\bigskip


\begin{thebibliography}{XXX00}

\bibitem[AL94]{AL94}
Adams, William W.; Loustaunau, Philippe.
\newblock{\em An introduction to Gr\"obner bases.\/}
\newblock Graduate Studies in Mathematics, 3. American Mathematical Society, Providence, RI, 1994.

\bibitem[BH]{BH}
Bruns, Winfred; Herzog, Jurgen.
\newblock{\em Cohen-Macaulay rings.\/}
\newblock Cambridge University Press.

\bibitem[CES07]{CES}
Ciuperc\u{a}, C\u{a}t\u{a}lin; Enescu, Florian; Spiroff, Sandra.
\newblock{\em Asymptotic growth of powers of ideals.\/}
\newblock Illinois J. Math. 51 (2007), no. 1, 29--39.

\bibitem[E96]{E96}
Eisenbud, David.
\newblock{\em Commutative algebra with a view towards algebraic geometry.\/}
\newblock Springer, GTM 150, ISBN 0-387-94269-6, second edition, 1996.

\bibitem[EM15]{EM14}
Enescu, Florian; Malec, Sara.
\newblock{\em Intersection algebras for principal monomial ideals in polynomial rings.\/}
\newblock  J. Algebra Appl. 14 (2015), no. 7, 1550108, 23 pp. 

\bibitem[ES16]{ES}
Enescu, Florian; Spiroff, Sandra.
\newblock{\em Computing the invariants of the intersection algebras for principal monomial ideals.\/}
\newblock preprint, 2016.

\bibitem[F02]{F}
Fields, J. Bruce.
\newblock{\em Lengths of Tors determined by killing powers of ideals in a local ring.\/}
\newblock J. Algebra 247 (2002), no. 1, 104--133.

\bibitem[Mac]{mac}
Grayson, Daniel R.; Stillman, Michael E.
\newblock{\em Macaulay 2, a software system for research in algebraic geometry.\/}
\newblock Avabilable at {\tt http://www.math.uiuc.edu/Macaulay2/}

\bibitem[H99]{H}
Huneke, Craig.
\newblock{\em Hyman Bass and ubiquity: Gorenstein rings.}
\newblock Algebra, K-theory, groups, and education (New York, 1997), 55-78, Contemp. Math., 243, Amer. Math. Soc., Providence, RI, 1999. 



\bibitem[Mal13]{mal13b}
Malec, Sara.
\newblock{\em Intersection Algebras and pointed rational cones.\/}
\newblock Ph.D. Thesis, 2013.

\bibitem[Mal15]{mal13a}
Malec, Sara.
\newblock{\em On the Intersection Algebra of Principal Ideals.\/}
 Comm. Algebra 43 (2015), no. 2, 623--635. 

\bibitem[MS]{MS}
Miller, Ezra; Sturmfels, Bernd
\newblock{\em Combinatorial Commutative Algebra.\/}
Springer, GTM 227, ISBN 0-387-22356-8, 2005.

\bibitem[Wol10]{math}
Wolfram Research, Inc.
\newblock{\em Mathematica,\/}
\newblock Version 8.0, Champaign, IL (2010).


\end{thebibliography}
\end{document}